\documentclass[11pt,a4paper]{article}
\usepackage{amsfonts,latexsym,amsmath,amssymb,bbm,amscd,amsthm}
\usepackage{mathrsfs}
\usepackage{bm}
\usepackage{appendix}
\usepackage{color}
\usepackage[top=1.2in, bottom=1.2in, left=1.25in, right=1.25in]{geometry}
\usepackage{cite}
\usepackage{cases}
\usepackage[utf8]{inputenc}
\usepackage{graphicx,float,enumerate}
\usepackage[T1]{fontenc}
\usepackage{authblk}
\numberwithin{equation}{section}
\usepackage[colorlinks=true,citecolor=red,linkcolor=blue,urlcolor=RubineRed,pdfpagetransition=Blinds,pdftoolbar=false,pdfmenubar=false]{hyperref}

\def\varep{\varepsilon}
\def\Re{\mathop{\rm Re}}

\def\N{\mathbb{N}}
\def\Z{\mathbb{Z}}

\def\R{\mathbb{R}}
\def\epsilon{\varepsilon}

\newcommand{\be}{\begin{equation}}
\newcommand{\ee}{\end{equation}}
\newcommand{\baa}{\begin{array}}
\newcommand{\eaa}{\end{array}}

\newtheorem{theorem}{Theorem}[section]
\newtheorem{lemma}[theorem]{Lemma}
\newtheorem{proposition}[theorem]{Proposition}
\newtheorem{corollary}[theorem]{Corollary}
\newtheorem{definition}[theorem]{Definition}
\newtheorem{remark}[theorem]{Remark}

\newcommand{\blue}{\color{blue}}

\setlength{\textheight}{20,7cm}
\setlength{\textwidth}{16cm} \setlength{\oddsidemargin}{0cm}
\setlength{\evensidemargin}{0cm}
\geometry{left=2.5cm,right=2.5cm,top=3cm,bottom=3cm}
\allowdisplaybreaks

\title{\bf{Propagation phenomena in periodic patchy landscapes with interface conditions}}
\author{Fran\c cois Hamel\thanks{Aix-Marseille Univ, CNRS, I2M, Marseille, France (\texttt{francois.hamel@univ-amu.fr}). This work has received funding from Excellence Initiative of Aix-Marseille Universit\'e~-~A*MIDEX, a French ``Investissements d'Avenir'' programme, and from the French ANR ReaCh (ANR-23-CE40-0023-02) project.}, \  Frithjof Lutscher\thanks{University of Ottawa, Department of Mathematics and Statistics \& Department of Biology, Ottawa, Canada (\texttt{flutsche@uottawa.ca}).} \ and Mingmin Zhang\thanks{Aix-Marseille Univ, CNRS, I2M, Marseille, France, and School of Mathematical Sciences, University of Science and Technology of China, Hefei, Anhui 230026, China (\texttt{mingmin.zhang.math@gmail.com}).}}

\date{}

\begin{document}
	
\maketitle
\vspace{-0.5cm}
\begin{center}
{\it In memory of Pavol Brunovsk\'y, with admiration for a great mathematician}
\end{center}
\vskip 0.5cm

\begin{abstract} 
\noindent{}This paper is concerned with a model for the dynamics of a single species in a one-dimensional heterogeneous environment. The environment consists of two kinds of patches, which are periodically alternately arranged along the spatial axis. We first establish the well-posedness for the Cauchy problem. Next, we give  existence and uniqueness results for the positive steady state and we analyze the long-time behavior of the solutions to the evolution problem. Afterwards, based on dynamical systems methods, we investigate the spreading properties  and the existence of pulsating traveling waves in the positive and negative directions. It is shown that the asymptotic spreading speed,  $c^*$, exists and coincides with the minimal wave speed of  pulsating traveling waves in positive and negative directions. In particular, we give a variational formula for $c^*$ by using the principal eigenvalues of certain linear periodic eigenvalue problems.
\vskip2pt\noindent{\small{\it{Mathematics Subject Classification}}: 35K57; 92D25; 35J60.}
\vskip2pt\noindent{\small{\it{Keywords}}: Patchy landscapes; Interface conditions; Periodic media; Asymptotic spreading speeds; Pulsating traveling waves.}
\vskip2pt\noindent{\small{\it{The manuscript has no associated data.}}}
\end{abstract}


\section{Introduction}

Reaction-diffusion equations of the type
\begin{align}
\label{RD}
u_t=\Delta u+f(u),~~ t>0,~x\in\mathbb{R}^N,
\end{align}
have been introduced in the pioneering works of Fisher \cite{Fisher1937} and Kolmogorov, Petrovsky and Piskunov \cite{KPP1937}. They are motivated by population genetics and aim at throwing light on the spatial spread of advantageous genetic features. The nonlinear reaction term considered there is that of logistic growth. Archetypes of such nonlinearities are $f(u)=u(1-u)$ or extensions like $f(u)=u(1-u^2)$, which are sometimes also called monostable since they have exactly one stable nonnegative steady state. Skellam \cite{Skellam1951} then investigated this type of models in order to study spatial propagation of species and proposed quantitative explanations for the spread of muskrats throughout Europe at the beginning of the 20th century.

Since these celebrated works, a vast mathematical literature has been devoted to the homogeneous equation \eqref{RD}. It is of particular interest to investigate the structure of  traveling front solutions and their stability, propagation or invasion and spreading properties. The former is related to the well-known result that this equation has a family of planar traveling fronts of the form $u(t,x)=U(x\cdot e-ct)$ for any given vector of unit norm $e\in\mathbb{S}^{N-1}$, which is the direction of propagation. Here, $c>0$ is the constant speed of the front and $U:\mathbb{R}\to (0,1)$ is the wave profile. It was proved in \cite{KPP1937} that, under KPP assumptions, $f(0)=f(1)=0$ and $0<f(s)\le f'(0)s$ in $s\in(0,1)$, there exists a threshold speed $c^*=2\sqrt{f'(0)}>0$ such that no fronts exist for $c<c^*$, while for each $c\ge c^*$, there is a unique (up to shift in space or time variables) planar front of the type $U(x\cdot e-ct)$. Such fronts are stable with respect to some natural classes of perturbations; see, for instance, \cite{Bramson1983,HNRR13,KPP1937,Lau1985,Sattinger1976,U1978}. Many papers were also dedicated to such planar fronts for other types of nonlinearities $f(\cdot)$, for example the bistable and combustion type; see, e.g., \cite{AW1975,BNS1985,Fife1979,FM1977,Kanel1961}. 

While traveling waves are interesting mathematical objects, they do not necessarily represent biologically realistic scenarios. For questions of biological invasions, it is more realistic to study how a locally introduced population would spread. Corresponding mathematical invasion and spreading properties for model (\ref{RD}) were established by Aronson and Weinberger \cite{AW1978}. Under the same assumptions on $f$ as in the previous paragraph, they proved that, starting with a nonnegative, compactly supported, continuous function $u_0\ge0$,  $u_0\not\equiv 0$, the solution~$u(t,x)$ of~\eqref{RD} spreads with speed $c^*$ in all directions for large times. More precisely, $\max_{|x|\le ct}|u(t,x)-1|\to 0$ as $t\to+\infty$ for each $c\in [0,c^*)$, and $\max_{|x|\ge ct}u(t,x)\to 0$ for each $c>c^*$.

Most landscapes are not homogeneous, as model (\ref{RD}) implicitly assumes. Several possible generalizations of the equation to heterogeneous landscapes exist, for example 
\begin{align}
\label{RD-periodic}
u_t=\nabla\cdot(D(x)\nabla u) u+f(x,u),~~ t>0,~x\in\mathbb{R}^N,
\end{align}
in periodic media (by periodic, we mean that $D(\cdot+k)=D$ and $f(\cdot+k,s)=f(\cdot,s)$ for all $k\in L_1\Z\times\cdots\times L_N\Z$ and $s\in[0,1]$, where $L_1,\ldots,L_N$ are given positive real numbers). For such equations, standard traveling fronts do not exist in general. Instead, the notion of traveling fronts is replaced by the more general concept of pulsating fronts \cite{SKT1986}. If a (unique) periodic positive steady state $p(x)$ of \eqref{RD-periodic} exists, a pulsating traveling front connecting 0 and $p(x)$ is a solution of the type $u(t,x)=U(x\cdot e-ct,x)$ with $c\neq 0$ and $e$ a unit vector representing the direction of propagation, if the function $U:\mathbb{R}\times\mathbb{R}^N\to\mathbb{R}$ satisfies
\begin{align*}
\begin{cases}
U(-\infty,x)=p(x),~U(+\infty,x)=0~~\text{uniformly in}~ x\in\mathbb{R}^N,\\
U(s,\cdot)~\text{is periodic in}~\mathbb{R}^N~\text{for all}~s\in\mathbb{R}.
\end{cases}
\end{align*}
Moreover,  for every $x\in\mathbb{R}^N$, the function $t\mapsto u(t,x+ct e)$ is in general quasi-periodic.

Berestycki, Hamel and Roques \cite{BHR2005-1} gave a complete and rigorous mathematical analysis of the periodic heterogeneous model (\ref{RD-periodic}) in any space dimension. They required the coefficient functions $D$ and $f$ to be sufficiently smooth, yet some of their results are valid under reduced regularity assumptions. Solutions in~\cite{BHR2005-1} are still at least of class $C^{1,\rho}$ with respect to $x$ for all $\rho\in(0,1)$, while the solutions considered in the present paper are even not continuous at all points in general, or not of class $C^1$ even when they are continuous after a rescaling. In~\cite{BHR2005-1}, existence, uniqueness and stability results were established. A criterion for species persistence and the effects of fragmentation on species survival were studied. Furthermore, the same authors  studied the question of biological invasion and existence of  pulsating fronts and they proved a variational formula of the minimal speed of such pulsating fronts and then analyzed the dependency  of this speed on the heterogeneity of the medium \cite{BHR2005-2}.  We refer the reader to~\cite{BH2002,BHN2005,H2008,HR2011,HZ1995,LZ2007,LZ2010,W2002} for more results on the existence, uniqueness and qualitative results of pulsating traveling fronts. For some results in space-time periodic media, see, e.g., \cite{Nadin2009,NRX2005,NX2005}. We also refer to \cite{BH2002,BHN2005,DGM2014,E10,FZ2015,GH2012,HS2009,Shen2010,Xin1991,Xin1992,Xin2000} for the existence and qualitative results with other types of nonlinearities or various boundary conditions in periodic domains.

While equation (\ref{RD-periodic}) and the corresponding theory is mathematically elegant, it is very difficult if not impossible to apply the model to biological invasions since the data requirements of finding diffusion coefficients and growth rates for continuously changing landscape characteristics are too costly. Alternatively, landscape ecology views natural environments as patches of homogeneous habitats such as forests, grasslands, or marshes, possibly fragmented by natural or artificial barriers like rivers or roads. Each patch is relatively homogeneous within but significantly different from adjacent patches. Shigesada, Kawasaki and Teramoto \cite{SKT1986} used this perspective of a patchy landscape and proposed a heterogeneous extension of (\ref{RD}) with piecewise constant coefficient functions; see also \cite{SK1997}. For simplicity, they considered only two types of patches, arranged in a periodically alternating way. More specifically, they introduced the following equation
\begin{align}
\label{SKT-model}
u_t=(D(x)u_x)_x+u(\mu(x)-u),~~t>0,~x\in\mathbb{R}\setminus S,
\end{align} 
where $\mu(x)$ is interpreted as the intrinsic growth rate of the population, $D(x)$ is the diffusion coefficient, and $S$ is the set of all interfaces between all adjacent patches. These functions are piecewise constant, namely, for $m\in\mathbb{Z}$,
$$\left\{\baa{llll}
D(x)=d_1(>0), & \mu(x)=\mu_1, & ml-l_1<x<ml & \text{(in patches of type 1)},\vspace{3pt}\\
D(x)=d_2(>0), & \mu(x)=\mu_2, & ml<x<ml+l_2 & \text{(in patches of type 2),}\eaa\right.$$
with $l_1,l_2>0$ and $l=l_1+l_2$. Without loss of generality, one can assume that $\mu_1\ge \mu_2$. Furthermore, if one assumes that the medium is not unfavorable everywhere, then $\mu_1>0$, i.e., type-1 patches support population growth and are ``source'' patches in biological terms. However, $\mu_2$ can be negative, so that patches of type 2 do not support population growth and are ``sink'' patches in biological terms. 

The above model is not complete. At the boundaries or interfaces between two patches, matching conditions must be imposed. Shigesada, Kawasaki and Teramoto \cite{SKT1986} required continuity of density and flux, i.e., for $t>0$:
\begin{align*}
u(t,x^-)=u(t,x^+),~~
D(x^-)u_x(t,x^-)=D(x^+)u_x(t,x^+),
\end{align*}
for all $x=ml$ and $x=ml+l_2$ ($m=0,\pm1,\pm 2,...$). Superscripts $\pm$ denote one-sided limits from the right and the left, respectively. When we take $D(x)\equiv d$, $\mu(x)\equiv \mu>0$, problem \eqref{SKT-model} is reduced to the Fisher-KPP equation with threshold speed $2\sqrt{d\mu}$. We shall refer to this kind of model, i.e., the differential equations combined with interface matching conditions as \emph{patch model} or \emph{patch problem}. 

Shigesada and coauthors  obtained the invasion conditions  in terms of the sizes of patches, diffusivities and growth rates. They proved that the population spreads successfully  if and only if the invasion condition is satisfied. Moreover, when invasion occurs, the distribution of the population initially localized in a bounded area always evolves into a pulsating front. The velocity was calculated with the aid of the dispersion relation, based on linearization at low density \cite{SKT1986}. When diffusion is constant, the rigorous analytical results in \cite{BHR2005-1} apply to this patch model, but the case of discontinuous diffusion and interface matching conditions is not covered.

Recently, Maciel and Lutscher \cite{ML2013} introduced novel interface matching conditions, based on the work by Ovaskainen and Cornell \cite{OC2003}. The population flux is still continuous at an interface, but the density is not. We will explain these conditions in detail below. These matching conditions not only allow us to include patch preference data, which are frequently collected in the field, into reaction-diffusion models, but also remove some biologically unrealistic behavior that the model with the continuous-density interface conditions above shows (see \cite{ML2013} for a thorough discussion). A number of recent studies use this new framework to study questions of persistence and spread \cite{ML2015,AL2019b} and apply it to marine reserve design \cite{AL2019a}. All these studies show that the correct choice of interface conditions has a crucial effect on basic quantities  such as population persistence conditions and spread rates in periodic environments. Later on, Maciel and Lutscher \cite{ML2018} showed how different movement strategies for competing species in patchy landscapes can lead to different outcomes of competition. Maciel and coauthors found evolutionarily stable movement strategies in a two-patch landscape \cite{MCCL2020}. A model including a higher-dimensional version of related flux matching conditions combined with Robin boundary conditions between two complementary subsets of $\R^N$ has also been studied recently by Berestycki, Rossi and Tellini~\cite{BRT2021}, with an emphasis on the propagation in the directions along the interface.

Our paper is devoted to a rigorous analytical study of the periodic patch model with two types of alternately arranged patches in a one-dimensional habitat. The population may grow or decay, depending on patch type, and diffusivity may change between patches. The setting and assumptions will be made precise below. The aim of the present work is first to rigorously prove the well-posedness of this somewhat nonstandard  patch model starting with nonnegative continuous and bounded initial data. Then we investigate the long-time behavior and spatial dynamics of this type of model in the framework of a periodic environment  with monostable dynamics. We give a criterion for the  existence and uniqueness of a positive and bounded steady state. Furthermore, under the hypothesis that the species can persist, we shall prove the existence of an asymptotic spreading speed $c^*$ of the solution to the Cauchy problem and we show that this spreading speed coincides with the minimal speed for rightward and leftward pulsating fronts. Moreover, the asymptotic spreading speed $c^*$ can be characterized using a family of eigenvalues. To the best of our knowledge, the results that had previously been discussed only formally or observed numerically in~\cite{ML2013}, are proved rigorously here for the first time. 


\section{Model presentation and statement of the main results}


\subsection{The model and some equivalent formulations}

Our model is a joint generalization of the models in \cite{SKT1986} and \cite{ML2013}. We consider a patchy periodically alternating landscape consisting of two types of patches (say, type 1 and 2); see Figure \ref{Fig:Schematic}. Each patch is homogeneous within. We denote the length of patch type $i$ ($i=1,2$) by $l_i$ so that the period is $l=l_1+l_2$. Accordingly, the real line is divided into intervals of the form
$$I_n=[nl-l_1,nl+l_2],~~n\in\mathbb{Z},$$
each consisting of two adjacent patches. Such intervals were called ``tiles'' in \cite{Cobbold}.
\begin{figure}[H]
\centering
\includegraphics[scale=0.7]{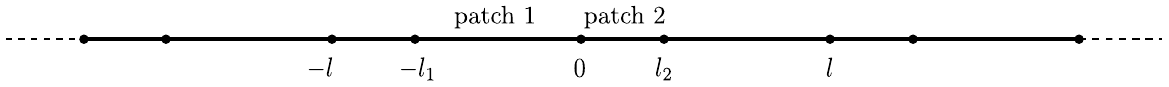}
\caption{\small Schematic figure of the one-dimensional periodic patchy environment.}
\label{Fig:Schematic}
\end{figure}
For $n\in \mathbb{Z}$, let $I_{1n}=(nl-l_1,nl)$ be the patches of type 1 and $I_{2n}=(nl,nl+l_2)$ be the patches of type 2. On each patch $I_{in}$, we denote by $v_{in}=v|_{I_{in}}$ the density of the population, by~$d_i$ the constant diffusion coefficients, and by $f_i$ the corresponding reaction nonlinearities. Our model then reads, for $n\in\mathbb{Z}$,
\begin{equation}
\label{their model-1}
\left\{\begin{aligned}
\frac{\partial v_{1n}}{\partial t}=d_1 \frac{\partial^2 v_{1n}}{\partial x^2}+f_1(v_{1n}),~~t>0,~x\in(nl-l_1,nl),\\
\frac{\partial v_{2n}}{\partial t}=d_2 \frac{\partial^2 v_{2n}}{\partial x^2}+f_2(v_{2n}),~~t>0,~x\in(nl,nl+l_2).
\end{aligned}\right.
\end{equation}
In~\eqref{their model-1}, the equations for $v_{in}=v|_{I_{in}}$ are set in the open intervals $(nl-l_1,nl)$ and $(nl,nl+l_2)$, but it will eventually turn out that the constructed solutions are such that the functions $v_{in}$ can be extended in $(0,+\infty)\times[nl-l_1,nl]$ or $(0,+\infty)\times[nl,nl+l_2]$ as $C^{1;2}_{t;x}$ functions, so that equations~\eqref{their model-1} will be satisfied in the closed intervals $[nl-l_1,nl]$ and $[nl,nl+l_2]$. The matching conditions for the population density and flux at the interfaces are given by
\begin{equation}
\label{their model-2}
\left\{\begin{aligned}
v_{1n}(t,x^-)&\!=\!kv_{2n}(t,x^+),\ \
\ ~~d_1 (v_{1n})_x(t,x^-)\!=\!d_2 (v_{2n})_x(t,x^+),~~~~~~~t>0,~x\!=\!nl,\vspace{3pt}\\
kv_{2n}(t,x^-)&\!=\!v_{1(n+1)}(t,x^+),\ \
d_2 (v_{2n})_x(t,x^-)\!=\!d_1  (v_{1(n+1)})_x(t,x^+),~
~t>0,~x\!=\!nl\!+\!l_2,
\end{aligned}\right.
\end{equation}
with parameter
\begin{align}
\label{k}
k=\frac{\alpha}{1-\alpha}\times\frac{d_2}{d_1}.
\end{align} 
Here, $\alpha\in(0,1)$ denotes the probability that an individual at the interface chooses to move to the adjacent patch of type~1, and $1-\alpha$ the probability that it moves to the patch of type~2. Individuals cannot stay at the interfaces. These interface conditions were derived in~\cite{OC2003} and studied in more detail in~\cite{ML2013}. They reflect the movement behavior of individuals when they come to the edge of a patch. With these interface conditions, the population density is discontinuous across a patch interface in the presence of patch preference and/or when the diffusion rates in these two kinds of patches are different. We point out that,  when  $k=1$ the model~\eqref{their model-1}--\eqref{their model-2} is reduced to the one in \cite{SKT1986}. Throughout the paper, we assume that the reaction terms~$f_i$~($i=1,2$) have the properties: 
\begin{equation}
\label{2.5'-existence}
f_i\in C^1(\R),\ f_i(0)=0,\hbox{ and there is $K_i>0$ such that $f_i\le0$ in $[K_i,+\infty)$}.
\end{equation}
Without loss of generality, we will consider type-1 patches as more favorable than type-2 patches, that is, $f'_1(0)\ge f'_2(0)$. In some statements, we will also assume that type-1 patches are ``source" patches, i.e., patches where the intrinsic growth rate of the population is positive ($f_1'(0)>0$), while type-2 patches may be source patches ($f'_2(0)>0$), or ``sink'' patches ($f_2'(0)<0$), or such that $f'_2(0)=0$. In order to investigate the long-time behavior and spatial dynamics, we will further assume in some statements that the functions $f_i$ satisfy the strong Fisher-KPP assumption:
\begin{equation}
\label{2.4'-uniqueness}\left\{\baa{l}
\displaystyle\hbox{the functions }s\mapsto\frac{f_i(s)}{s}~\text{are non-increasing in $s>0$ for $i=1,2$},\vspace{3pt}\\
\hbox{and decreasing in $s>0$ for at least one $i$}.\eaa\right.
\end{equation}
For instance,  $f_i$ satisfying hypotheses \eqref{2.5'-existence}--\eqref{2.4'-uniqueness} can be functions of the type  $f_i(s)=s(\mu_i-s)$.

Since the discontinuity in the densities at the interfaces makes the problem quite delicate to study, we rescale the densities in such a way that the matching conditions become continuous in the density. More precisely, we set $u_{1n}(t,x)=v_{1n}(t,x)$ for $t\ge 0$, $x\in(nl-l_1,nl)$ and $n\in\mathbb{Z}$, and $u_{2n}(t,x)=kv_{2n}(t,x)$  for $t\ge 0$, $x\in(nl,nl+l_2)$ and $n\in\mathbb{Z}$. Then
$u_{1n}$ satisfy the same equations as $v_{1n}$ with $\tilde f_1(s)=f_1(s)$, while $u_{2n}$ satisfy the equations of $v_{2n}$ with $f_2$ replaced by $\tilde f_2(s)=kf_2(s/k)$. We notice that $\tilde f_i$ ($i=1,2$) satisfy the same hypotheses as $f_i$ with $K_i$ replaced by $\tilde K_i$ where $\tilde K_1=K_1$ and $\tilde K_2 = kK_2$. Thanks to the change of variables, the interface conditions for the densities are now continuous; however, the flux interface conditions  become discontinuous, namely, 
$$\left\{\baa{lll}
u_{1n}(t,x^-)=u_{2n}(t,x^+), & \displaystyle \,d_1(u_{1n})_x(t,x^-)=\frac{d_2}{k}(u_{2n})_x(t,x^+), & t>0,~x=nl,\\
u_{2n}(t,x^-)=u_{1(n+1)}(t,x^+), & \displaystyle\frac{d_2}{k} (u_{2n})_x(t,x^-)=  d_1(u_{1(n+1)})_x(t,x^+), & t>0,~x=nl+l_2.\eaa\right.$$
We drop the tilde from hereon. Notice that the properties~\eqref{2.5'-existence} and~\eqref{2.4'-uniqueness} are invariant under this change. Putting it all together, we are led to  the following problem:
\begin{equation}
\label{m-1}
\left\{\begin{aligned}
\frac{\partial u_{1n}}{\partial t}&=d_1 \frac{\partial^2 u_{1n}}{\partial x^2}+ f_1(u_{1n}),~~t>0,~x\in(nl-l_1,nl),\\
\frac{\partial u_{2n}}{\partial t}&=d_2 \frac{\partial^2 u_{2n}}{\partial x^2}+ f_2(u_{2n}),~~t>0,~x\in(nl,nl+l_2),
\end{aligned}\right.
\end{equation}
with continuous density conditions and discontinuous flux interface conditions, 
\begin{equation}
\label{m-2}
\left\{\baa{lll}
u_{1n}(t,x^-)=u_{2n}(t,x^+), & \ \,(u_{1n})_x(t,x^-)=\sigma(u_{2n})_x(t,x^+), & t>0,~x=nl,\vspace{3pt}\\
u_{2n}(t,x^-)=u_{1(n+1)}(t,x^+), & \sigma (u_{2n})_x(t,x^-)=  (u_{1(n+1)})_x(t,x^+), & t>0,~x=nl+l_2,\eaa\right.
\end{equation}
in which, from \eqref{k}, we have
\begin{align}\label{sigma}
\sigma=\frac{d_2}{kd_1}=\frac{1-\alpha}{\alpha}>0.
\end{align}
We point out that, for the original model~\eqref{their model-1}--\eqref{their model-2} before rescaling, the total mass would be conserved if there were no growth terms, due to the continuity of the fluxes at the interfaces. For the problem~\eqref{m-1}--\eqref{m-2}, even without growth terms, the total mass is not conserved, due to the discontinuity of the fluxes at the interfaces, in general. However, one must keep in mind that the two formulations~\eqref{their model-1}--\eqref{their model-2} and~\eqref{m-1}--\eqref{m-2} are actually equivalent, and that the rescaling used in type-$2$-patches to make the density continuous at the interfaces is done only for mathematical convenience, in order to work with spatially continuous solutions.

From now on, we denote by
$$S_1=l\mathbb{Z}$$
the interface points between $(nl-l_1,nl)$ and $(nl,nl+l_2)$, and by
$$S_2=\{s+l_2:s\in l\mathbb{Z}\}$$
the interface points between $(nl,nl+l_2)$ and $(nl+l_2,(n+1)l)$. Therefore, $S=S_1\cup S_2$ represents all the interface points in $\mathbb{R}$. For convenience of our analysis, by setting $u(t,x)=u_{1n}(t,x)$ for $t>0$ and $x\in(nl-l_1,nl)$, $u(t,x)=u_{2n}(t,x)$ for $t>0$ and $x\in(nl,nl+l_2)$, $u(t,x)=u_{1n}(t,x^-)=u_{2n}(t,x^+)$ for $t>0$ and $x=nl$, and $u(t,x)=u_{2n}(t,x^-)=u_{1(n+1)}(t,x^+)$ for $t>0$ and $x=nl+l_2$, we rewrite the above model \eqref{m-1}--\eqref{m-2} in the following form:
\begin{equation}
\label{m-pb}
\left\{\baa{rclrcll}
u_t-d(x)u_{xx} & = & f(x,u), & & & & t>0,\ x\in\mathbb{R}\!\setminus\!S,\vspace{3pt}\\
u(t,x^-) & = & u(t,x^+), & u_x(t,x^-) & = & \sigma u_x(t,x^+), & t> 0,\ x\in S_1,\vspace{3pt}\\
u(t,x^-) & = & u(t,x^+), & \sigma u_x(t,x^-) & = & u_x(t,x^+), & t> 0,\ x\in S_2,\eaa\right.
\end{equation}
where the diffusivity $d$ and nonlinearity $f$ are given by
\begin{align}
\label{d-f}
d(x)=\begin{cases} d_1,~~x\in(nl-l_1,nl),\\ d_2,~~x\in(nl,nl+l_2),\end{cases}~~
f(x,s)=\begin{cases} f_1(s),~~x\in(nl-l_1,nl),\\ f_2(s),~~x\in(nl,nl+l_2),\end{cases}
\end{align}
and the parameter $\sigma>0$ is defined as in \eqref{sigma}. Conditions~\eqref{2.5'-existence} and~\eqref{2.4'-uniqueness} on $f_i$ are equivalent to the following ones: 
\begin{align}
\label{2.5}
\begin{cases}
\forall\,x\in\mathbb{R}\!\setminus\!S,\ f(x,\cdot)\in C^1(\mathbb{R}),\ f(x,0)=0,\\
\exists\,M=\max(K_1,K_2)>0,\ \forall\,x\in\mathbb{R}\!\setminus\!S,\ \forall\,s\ge M,\ f(x,s)\le0,\\
\forall\,x\in(nl-l_1,nl),\ f(x,\cdot)=f_1,\ \ \forall\,x\in(nl,nl+l_2),\ f(x,\cdot)=f_2.
\end{cases}
\end{align} 
and
\begin{equation}
\label{2.4}
\begin{aligned}
\begin{cases}
\displaystyle\hbox{the functions }s\mapsto \frac{f(x,s)}s~\text{are non-increasing in}~s>0\hbox{ in all patches},\\
\displaystyle\hbox{and decreasing in $s>0$ in at least one type of patch}.
\end{cases}
\end{aligned}
\end{equation}
From now on, we always assume that \eqref{2.5} is satisfied. Throughout this paper, unless otherwise specified,  we always write $I$ for an arbitrary patch in $\mathbb{R}$ of either type, i.e., either~$I=(nl-l_1,nl)$ or~$I=(nl,nl+l_2)$.


\subsection{Well-posedness of the Cauchy problem \eqref{m-pb}--\eqref{d-f}}

Since the patch model considered in this paper is not standard, we shall first establish the well-posedness of the Cauchy problem \eqref{m-pb}--\eqref{d-f} with hypotheses~\eqref{2.5} on $f$ and with nonnegative bounded and continuous initial conditions $u_0:\R\to\R$. Before proceeding with the analysis, we present here the definition of a classical solution to \eqref{m-pb}--\eqref{d-f}. 

\begin{definition}\label{def1}
For $T\in(0,+\infty]$, we say that a continuous function $u:[0,T)\times\R\to\R$ {$($notice that $u$ is continuous up to $t=0$$)$} is a classical solution of the Cauchy problem~\eqref{m-pb}--\eqref{d-f} in~$[0,T)\times\R$ with an initial condition~$u_0$, if $u(0,x)=u_0(x)$ for all $x\in\R$, if $u|_{(0,T)\times\bar I}$ is of class~$C^{1;2}_{t;x}\big((0,T)\times \bar I\big)$ for each patch $I=(nl-l_1,nl)$ or $(nl,nl+l_2)$, and if all identities in \eqref{m-pb} are satisfied pointwise for $0<t<T$.
\end{definition}

\begin{theorem}
\label{thm-m-wellposedness}
Under assumption~\eqref{2.5}, for any nonnegative bounded continuous initial condition $u_0$, there is a unique nonnegative bounded classical solution~$u$ in~$[0,+\infty)\times\R$ of the Cauchy problem~\eqref{m-pb}--\eqref{d-f}. Furthermore, for any $\tau>0$ and any patch $I\subset \mathbb{R}$, 
\begin{align*}
\Vert u|_{[\tau,+\infty)\times\bar I}\Vert_{C^{1,\gamma;2,\gamma}_{t;x}([\tau,+\infty)\times \bar I)}\le C,
\end{align*}
with a positive constant $C$ depending on $\tau$, $l_{1,2}$, $d_{1,2}$, $f_{1,2}$, $\sigma$ and $\Vert u_0 \Vert_{L^\infty(\mathbb{R})}$, and with a universal positive constant $\gamma\in(0,1)$. Moreover, $u(t,x)>0$ for all $(t,x)\in(0,+\infty)\times\R$ if $u_0\not\equiv0$, and $u(t,x)=u(t,x+l)$ for all $(t,x)\in[0,+\infty)\times\R$ if $u_0(x)=u_0(x+l)$ for all $x\in\R$. Lastly, the solutions depend monotonically and continuously on the initial data, in the sense that if $u_0\le v_0$ then the corresponding solutions satisfy $u\le v$ in~$[0,+\infty)\times\R$, and for any $T\in(0,+\infty)$ the map~$u_0\mapsto u$ is continuous from $C^+(\R)\cap L^\infty(\R)$ to~$C([0,T]\times\R)\cap L^\infty([0,T]\times\R)$ equipped with the sup norms, where $C^+(\R)$ denotes the set of nonnegative continuous functions in $\R$.
\end{theorem}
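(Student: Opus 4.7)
The plan is to recast the problem~\eqref{m-pb} as a divergence-form parabolic equation whose natural transmission conditions are exactly those of~\eqref{m-pb}. Define $A(x):=1$ and $w(x):=1/d_1$ on each type-1 patch, and $A(x):=\sigma$ and $w(x):=\sigma/d_2$ on each type-2 patch; both are bounded and uniformly positive. A direct check shows that the two flux conditions at points of $S_1$ and $S_2$ are equivalent to the continuity of $Au_x$ across $S$; therefore a continuous function $u$ satisfies~\eqref{m-pb}--\eqref{d-f} classically if and only if it solves
\begin{equation*}
w(x)\,u_t=(A(x)u_x)_x+w(x)\,f(x,u),\qquad t>0,\ x\in\mathbb{R},
\end{equation*}
in the usual weak sense on all of $\mathbb{R}$. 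This unlocks the standard machinery for parabolic equations in divergence form with bounded measurable coefficients.

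For existence, I would first truncate $f$ outside $[0,M^*]$, with $M^*:=\max(M,\Vert u_0\Vert_{L^\infty(\mathbb{R})})$, to obtain a globally Lipschitz modification $\tilde f$ that still vanishes at $0$ and satisfies $\tilde f(x,s)\le0$ for $s\ge M^*$. A solution of the truncated problem is produced either by a Galerkin/Lions argument on an exhausting sequence of bounded intervals (with, say, Dirichlet data, passing to the limit using the uniform $L^\infty$ bounds coming from the sign of $\tilde f$), or by a Banach fixed-point argument in a weighted $C_b$ space based on the analytic semigroup generated by $u\mapsto(1/w)(Au_x)_x$ on $L^2(\mathbb{R},w\,dx)$. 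Testing the equation against $(u-M^*)^+$ and $u^-$ yields $0\le u\le M^*$, so $u$ is in fact a solution of the original untruncated problem.

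The technical heart of the proof is the up-to-interface regularity. Inside each patch the coefficients are constant, so classical parabolic Schauder theory gives $C^{1,\gamma;2,\gamma}_{t;x}$ estimates in the interior depending only on the $L^\infty$ norm of the right-hand side. The main obstacle is to push these bounds all the way to the interface points in the form claimed. For this I would combine the De~Giorgi--Nash--Moser theorem applied to the divergence-form equation above (giving parabolic H\"older continuity across $S$, with constants depending only on the uniform ellipticity of $A$ and $w$) with a local unfolding of the interface: near an interface point one side of the problem is rescaled by a linear dilation in $x$ chosen so that both the density and the transformed spatial derivative match continuously with the other side, reducing the transmission problem to a one-dimensional parabolic equation with piecewise-constant but matched principal part. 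Classical Schauder estimates plus bootstrapping then yield the claimed $C^{1,\gamma;2,\gamma}$ bound on each closed patch, with the $\tau$-dependence arising from the usual parabolic smoothing away from $t=0$.

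The remaining assertions are routine once these foundations are in place. A weak maximum principle for the linear divergence-form equation satisfied by the difference of two solutions yields simultaneously uniqueness and the monotonicity $u_0\le v_0\Rightarrow u\le v$; combined with the Lipschitz character of $\tilde f$ and a Gronwall argument in $L^\infty$, this gives continuity of $u_0\mapsto u$ from $C^+(\mathbb{R})\cap L^\infty(\mathbb{R})$ into $C([0,T]\times\mathbb{R})\cap L^\infty([0,T]\times\mathbb{R})$. Positivity of $u$ for $t>0$ when $u_0\not\equiv0$ follows from the strong maximum principle applied inside each patch plus the Hopf lemma applied on each side of an interface: a zero of $u$ at an interface point would force $u_x$ to vanish from both sides, which is incompatible with the transmission relation and strict positivity nearby. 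Lastly, the spatial $l$-periodicity of $d$, $f$ and the interface set $S$ together with uniqueness imply $u(t,\cdot+l)\equiv u(t,\cdot)$ whenever $u_0$ is $l$-periodic.
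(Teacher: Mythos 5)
Your reduction to divergence form is correct and is in fact the cleanest way to see what is going on: with $A=1$, $w=1/d_1$ on type-1 patches and $A=\sigma$, $w=\sigma/d_2$ on type-2 patches, the flux conditions in~\eqref{m-pb} are exactly continuity of $Au_x$, and the equation becomes $w\,u_t=(Au_x)_x+w\,f(x,u)$. It is worth noting that the paper uses precisely this weight in disguise: the inner product~\eqref{inner product on X} carries the factor $1/k=\sigma d_1/d_2$ on type-2 patches, so the paper's symmetric maximal monotone operator on $X$ is your weighted divergence-form operator on a truncated interval. The paper then runs a full semigroup/fractional-power argument on $[-nl,nl]$, proves a comparison principle there, and obtains the solution on $\mathbb{R}$ by a monotone exhaustion, whereas you propose to work directly on $\mathbb{R}$ with De~Giorgi--Nash--Moser and Schauder; the two routes lead to the same a~priori estimates, and yours gives a more transparent explanation of why they are independent of the patch index.

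There is, however, a genuine gap in your up-to-interface regularity step. You claim that a linear dilation on one side of an interface can be chosen so that ``both the density and the transformed spatial derivative match continuously,'' reducing to ``a one-dimensional parabolic equation with piecewise-constant but matched principal part,'' after which ``classical Schauder estimates'' apply. This cannot work as stated: under a dilation $y=\alpha x$ on (say) the right side the interface condition $u_x(0^-)=\sigma u_x(0^+)$ becomes $u_y(0^-)=\sigma\alpha\,u_y(0^+)$ and the diffusion coefficient becomes $d_2\alpha^2$, so one can enforce either $C^1$-matching ($\alpha=1/\sigma$) or equality of the diffusion coefficients ($\alpha=\sqrt{d_1/d_2}$), but not both unless $d_2=\sigma^2 d_1$, i.e.\ $k=1$. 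In the $C^1$-matched variables the leading coefficient still jumps across $y=0$, so $u_{yy}$ is forced to be discontinuous there by the equation itself, and ``classical'' Schauder theory does not give $C^{1,\gamma;2,\gamma}_{t;x}$ up to the closed patch. What does work, and what the paper effectively does via $\mathcal D(A^\beta)$ embeddings and the auxiliary function $\tilde u^n$ in~\eqref{deftildeun}, is a transmission-type bootstrap: use your divergence structure to observe that the flux $v:=Au_x$ is continuous and is itself a weak solution of a conjugate divergence-form equation (with coefficients $1/A$, $1/w$ and a Hölder source), hence $v$ is Hölder by De~Giorgi--Nash--Moser; then $u_x=v/A$ is Hölder on each closed patch, $u_{xx}=v_x/A=(w u_t-wf)/A$ is controlled once $u_t$ is, and one closes the loop by differentiating in time (or by Schauder for the constant-coefficient equation in each patch with Hölder lateral data coming from the one-sided traces of $u$ and $u_x$). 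Without an argument of this kind the crucial $C^{1,\gamma;2,\gamma}_{t;x}([\tau,+\infty)\times\bar I)$ bound is not established. As a minor point, in your positivity argument a touching zero of $u$ at an interface point does not force $u_x$ to ``vanish from both sides''; the Hopf lemma gives strictly opposite signs of the one-sided derivatives, and it is this that contradicts the transmission relation.
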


We remark that the existence and  uniqueness of a global bounded periodic classical solution to such a patch model was  considered in \cite{MCCL2020} for \eqref{their model-1}--\eqref{their model-2} with periodic and possibly discontinuous initial data. By contrast, our result is  established for general continuous and bounded initial data. Moreover, we also discuss the continuous dependence of solutions on intial data and give  a priori estimates, which will play a critical role in the  monotone semiflow argument used in the sequel. The well-posedness proof here can also be adapted to other non-periodic patch problems. 


\subsection{Existence, uniqueness and attractiveness of a positive periodic steady state}

To investigate the existence and uniqueness of a positive bounded steady state as well as the large-time behavior of solutions to the Cauchy problem, we first study the following eigenvalue problem. From~\cite{ML2013,SKT1986} (see also Lemma~\ref{lemma-concave} below), there exists a principal eigenvalue $\lambda_1$, defined as the unique real number such that there exists a unique continuous function $\phi:\R\to\R$ with~$\phi|_{\bar{I}}\in C^\infty(\bar I)$ for each patch~$I$, that satisfies
\begin{align}
\label{ep-0}
\begin{cases}
\mathcal{L}_0\phi:=-d(x)\phi''-f_s(x,0)\phi=\lambda_1 \phi,~~&x\in\mathbb{R}\backslash S,\\
\phi(x^-)=\phi(x^+),~\phi'(x^-)=\sigma\phi'(x^+),~~&x\in S_1,\\
\phi(x^-)=\phi(x^+),~\sigma\phi'(x^-)=\phi'(x^+),~~&x\in S_2,\\
\phi(x)~\text{is}~\text{periodic},~\phi>0,~\Vert \phi \Vert_{L^\infty(\mathbb{R})}=1.
\end{cases}
\end{align}
By periodic, we mean that $\phi(\cdot+l)=\phi$ in $\R$. In the sequel we say that $0$ is an unstable steady state of \eqref{m-pb}--\eqref{d-f} if $\lambda_1<0$, otherwise the state $0$ is said to be stable (i.e., $\lambda_1\ge 0$). These definitions will be seen to be natural in view of the results we prove here. By applying~\eqref{ep-0} at minimal and maximal points of the positive continuous periodic function $\phi$, whether these points be in patches or on the interfaces, it easily follows that
$$-f'_1(0)\le\lambda_1\le-f'_2(0)$$
(remember that $f'_1(0)\ge f'_2(0)$ without loss of generality). In particular, if $\lambda_1<0$, then $f'_1(0)>0$, that is, $f_s(x,0)$ is necessarily positive (at least) in the favorable patches. 

We first state a criterion for the existence of a continuous solution $p:\R\to\R$ (such that  $p|_{\bar{I}}\in C^2(\bar I)$ for each patch $I$) to the elliptic problem:
\begin{equation}
\label{m-pb-elliptic}
\begin{aligned}
\begin{cases}
-d(x)p''(x)-f(x,p(x))=0,~~&x\in\mathbb{R}\backslash S,\\
p(x^-)=p(x^+),~p'(x^-)=\sigma p'(x^+),~~&x\in S_1,\\
p(x^-)=p(x^+),~\sigma p'(x^-)=p'(x^+),~~&x\in S_2.
\end{cases}
\end{aligned}
\end{equation}

\begin{theorem}
\label{thm-2.1-existence}
(i) Assume that $0$ is an unstable solution of \eqref{m-pb-elliptic} $($i.e., $\lambda_1<0$$)$ and that $f$ satisfies~\eqref{2.5}. Then there exists a bounded positive and periodic  solution $p$ of \eqref{m-pb-elliptic}.\par
(ii) Assume that $0$ is a stable solution of \eqref{m-pb-elliptic} $($i.e., $\lambda_1\ge 0$$)$ and that $f$ satisfies \eqref{2.5}--\eqref{2.4}. Then $0$ is the only nonnegative bounded  solution of \eqref{m-pb-elliptic}.
\end{theorem}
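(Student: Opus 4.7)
My plan is to prove part (i) by a monotone sub/supersolution argument run through the parabolic flow of Theorem~\ref{thm-m-wellposedness}, and part (ii) by the standard KPP uniqueness scheme adapted to the interface conditions, with $p/\phi$ as the key unknown.

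For (i), the constant $\overline p\equiv M$ is a stationary supersolution since by~\eqref{2.5} $-d(x)\overline p''=0\ge f(x,M)$ on each patch and the interface conditions are trivially satisfied. For a subsolution I would take $\underline p=\varepsilon\phi$ with $\phi$ the principal eigenfunction of~\eqref{ep-0}: using $-d(x)\phi''=(\lambda_1+f_s(x,0))\phi$ and Taylor-expanding $f$ near $0$,
\begin{align*}
-d(x)\underline p''-f(x,\underline p)=\varepsilon\lambda_1\phi+\bigl(\varepsilon f_s(x,0)\phi-f(x,\varepsilon\phi)\bigr)=\varepsilon\lambda_1\phi+o(\varepsilon)<0
\end{align*}
uniformly in $x\in\R\setminus S$ for $\varepsilon$ small, because $\lambda_1<0$ and $\phi$ is continuous, $l$-periodic and bounded below by a positive constant; the interface conditions in~\eqref{m-pb-elliptic} pass from $\phi$ to $\varepsilon\phi$ by linearity. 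Launching the Cauchy problem at $u_0=\varepsilon\phi\le M$, comparison of $u(t,\cdot)$ with $u(t+h,\cdot)$ for $h>0$ shows that $t\mapsto u(t,\cdot)$ is nondecreasing, while comparison with the stationary $\overline p\equiv M$ keeps it below $M$, so $u(t,x)\nearrow p(x)$ pointwise with $\varepsilon\phi\le p\le M$. The patchwise $C^{1,\gamma/2;2,\gamma}_{t;x}$ a priori estimates of Theorem~\ref{thm-m-wellposedness} let me pass to the limit in~\eqref{m-pb} across each closed patch and through each interface, yielding a classical, positive, $l$-periodic solution of~\eqref{m-pb-elliptic}.

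For (ii), let $p$ be a nonnegative bounded solution of~\eqref{m-pb-elliptic}, and suppose by contradiction $p\not\equiv0$; the strong maximum principle on each patch combined with the flux matching (the tool also used in the positivity part of Theorem~\ref{thm-m-wellposedness}) gives $p>0$ on~$\R$. Since $\phi>0$ is periodic and continuous, $\inf_\R\phi>0$, hence $\tau^{*}:=\sup_\R(p/\phi)\in(0,+\infty)$. Setting $v:=p/\phi$ and using $-d\phi''=(\lambda_1+f_s(x,0))\phi$, $-dp''=f(x,p)$ and the KPP inequality $f(x,p)/p\le f_s(x,0)$, a direct computation on each patch yields
\begin{align*}
-d(x)\phi(x)v''(x)-2d(x)\phi'(x)v'(x)=v\phi\Bigl[\tfrac{f(x,v\phi)}{v\phi}-\lambda_1-f_s(x,0)\Bigr]\le-\lambda_1\phi v\le0,
\end{align*}
so that $z:=\tau^{*}-v\ge0$ is a bounded supersolution of the linear operator $L:=-d\phi\partial_x^{2}-2d\phi'\partial_x$ on $\R\setminus S$, with $\inf z=0$; differentiating the quotient shows that $z$ satisfies the same interface conditions as $\phi$, namely $z'(x^-)=\sigma z'(x^+)$ on $S_1$ and $\sigma z'(x^-)=z'(x^+)$ on $S_2$.

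If $\inf z$ is attained at some interior point of a patch, the strong minimum principle for $L$ (no zero-th order term) forces $z\equiv0$ on the whole patch containing that point; a Hopf argument on each side of any interface, combined with $\sigma>0$ and the flux relations just stated, then propagates $z\equiv0$ across every interface and hence to all of~$\R$. If $\inf z$ is not attained, I would choose a sequence $x_n$ with $z(x_n)\to0$, write $x_n=l m_n+y_n$ with $y_n\in[0,l]$, translate $p$ by $lm_n$, and use the $l$-periodicity of the coefficients and the uniform patchwise $C^{2,\gamma}$ estimates (obtained as in the elliptic analogue of Theorem~\ref{thm-m-wellposedness}) to extract a locally uniform limit $p_\infty$, which is still a nonnegative bounded solution of~\eqref{m-pb-elliptic} with $p_\infty/\phi\le\tau^{*}$ and equality attained at the limit $y_\infty$ of $\{y_n\}$; the interior case then applies to $p_\infty$. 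In either case one obtains $p\equiv\tau^{*}\phi$ (or $p_\infty\equiv\tau^{*}\phi$), and substituting into~\eqref{m-pb-elliptic} gives $f(x,\tau^{*}\phi)/(\tau^{*}\phi)=\lambda_1+f_s(x,0)\ge f_s(x,0)$ on every patch, which forces $\lambda_1=0$ and equality in the KPP inequality everywhere; since by~\eqref{2.4} the map $s\mapsto f(x,s)/s$ is strictly decreasing on at least one patch type, where $\tau^{*}\phi>0$, this is a contradiction, so $p\equiv0$. The main technical delicacy I anticipate is a clean Hopf lemma across an interface; I expect it to follow from a direct comparison with an explicit exponential barrier on each side, the sign of $\sigma>0$ ensuring that the two one-sided Hopf inequalities $z'(x^-)<0$ and $z'(x^+)>0$ are incompatible with the flux matching unless $z$ vanishes identically on one of the two adjacent closed patches, from which propagation to the whole line is straightforward.
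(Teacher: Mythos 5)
Your proposal is correct and follows essentially the same approach as the paper: a monotone parabolic flow trapped between $\varepsilon\phi$ and $M$ for existence, and a sliding comparison with $\gamma\phi$ (plus translation-compactness when the extremum is not attained) for the Liouville statement, both relying on the strong maximum principle and the Hopf lemma propagated across interfaces via the flux conditions. The differences are minor and symmetric — you launch the flow upward from the subsolution $\varepsilon\phi$ rather than downward from the supersolution $M$, and in part (ii) you normalize by $\phi$ (working with $z=\tau^*-p/\phi$ and a drift operator with no zero-order term, transferring the interface conditions to $p/\phi$ as you verify) and conclude by substituting $p\equiv\tau^*\phi$ back into~\eqref{m-pb-elliptic}, whereas the paper works with $z=\gamma^*\phi-p$, uses the mean value theorem to produce a bounded zero-order coefficient, and gets its contradiction directly from the strict elliptic inequality on the favorable patches.
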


For reaction-diffusion equations that describe population dynamics in general periodically fragmented landscapes but do not include movement behavior at interfaces, the criteria for existence (and uniqueness) of the stationary problem in arbitrary dimension can be found in \cite{BHR2005-1}. It turns out that the approach there can be adapted to our periodic patch model with the additional nonstandard interface conditions. 

Let us now provide an insight into the stability of the trivial solution of \eqref{m-pb-elliptic}.  Under certain reasonable hypotheses on the diffusitivies, the sizes of favorable and unfavorable patches, as well as the nonlinearities, the principal eigenvalue $\lambda_1$ of \eqref{ep-0} can indeed be negative. For example,  when all patches support population growth, namely $f'_1(0)>0$ and $f'_2(0)>0$, then the zero state is unstable. When the landscape consists of source and sink patches, i.e., when $f_1'(0)>0>f_2'(0)$, the stability of the zero state depends on the relationships between patch size, patch preference, diffusivity and growth rates. In the case {\blue $k=1$},  Shigesada and coworkers derived such a stability criterion \cite{SKT1986}; the case for general $\sigma>0$ can be found in \cite{ML2013}. We here derive an even more general formula, when we only assume that $f'_2(0)\le f'_1(0)$. To do so, we first observe that the continuous functions $x\mapsto\phi(-l_1-x)$ and $x\mapsto\phi(l_2-x)$ still solve~\eqref{ep-0} as $\phi$ does, and by uniqueness we get that $\phi(-l_1-x)=\phi(l_2-x)=\phi(x)$ for all $x\in\R$, hence~$\phi'(-l_1/2)=\phi'(l_2/2)=0$. Then, as in~\cite{ML2013,SKT1986}, by solving~\eqref{ep-0} in $[-l_1/2,0]$ and in $[0,l_2/2]$ with zero derivatives at $-l_1/2$ and $l_2/2$, and by matching the interface conditions at $0$, we find that~$\lambda_1$ is the smallest root in~$[-f'_1(0),-f'_2(0)]$ of the equation:
\begin{equation}\label{formula}
\sqrt{\frac{f_1'(0)+\lambda_1}{d_1}}\,\tan\Big(\sqrt{\frac{f_1'(0)+\lambda_1}{d_1}}\times\frac{l_1}{2}\Big)=\sigma\sqrt{-\frac{\lambda_1+f_2'(0)}{d_2}}\,\tanh\Big(\sqrt{-\frac{\lambda_1+f_2'(0)}{d_2}}\times\frac{l_2}{2}\Big).
\end{equation}
When $0<f'_2(0)\le f'_1(0)$ or when $0=f'_2(0)<f'_1(0)$ (irrespective of the other parameters), then the trivial solution of \eqref{m-pb-elliptic} is unstable (i.e., $\lambda_1<0$). When $f'_2(0)\le f'_1(0)\le0$, then $\lambda_1\ge0$. When $f'_2(0)<0<f'_1(0)$, we then derive that the trivial solution of \eqref{m-pb-elliptic} is stable ($\lambda_1\ge 0$) if 
\begin{equation}
l_1\le l_1^c\colon =2\sqrt{\frac{d_1}{f'_1(0)} }\tan^{-1}\left(\sigma\sqrt{\frac{-d_1f'_2(0)}{d_2f'_1(0)}}\tanh\Big(\sqrt{\frac{-f_2'(0)}{d_2}}\times\frac{l_2}{2}\Big)\right)
\label{l1c}
\end{equation}
(notice that $l_1^c>0$), and unstable ($\lambda_1<0$) if $l_1>l_1^c$. The persistence threshold $l_1^c$ is decreasing with $f_1'(0)>0$ and increasing with $d_1$ and $l_2$. Passing to the limit   $l_2\to+\infty$, we find that 
\begin{equation*}
l_1^c\to L^c_1\colon=2\sqrt{\frac{d_1}{f'_1(0)} }\tan^{-1}\left(\sigma\sqrt{\frac{-d_1f'_2(0)}{d_2f'_1(0)}}\right).
\end{equation*}
Therefore, as long as $l_1>L^c_1$, the trivial solution of~\eqref{m-pb-elliptic} is unstable (i.e., $\lambda_1<0$), no matter how large the size of the unfavorable patches is. Similarly, there is a critical rate
$$(f'_2(0))^c=-\frac{d_2 f'_1(0)}{\sigma^2 d_1}\left[\tan\Big(\sqrt{\frac{f_1'(0)}{d_1}}\times\frac{l_1}{2}\Big)\right]^2$$
such that, if $0>f_2'(0)>(f'_2(0))^c$, then  the trivial solution of \eqref{m-pb-elliptic} is unstable (i.e., $\lambda_1<0$), no matter how large the size of the unfavorable patch is.  

It also follows from~\eqref{formula} that, provided $f'_2(0)\neq f'_1(0)$, the principal eigenvalue $\lambda_1$ is increasing with respect to $\sigma>0$, that is, $\lambda_1$ is decreasing with respect to $\alpha\in(0,1)$. When $\alpha\in(0,1)$ increases, then the individuals at the interfaces have more propensity to go to patches of type~$1$ rather than to patches of type~$2$. This means that the relative advantage of the more favorable patches becomes more prominent:~$\lambda_1$ decreases and the~$0$ solution has more chances to become unstable. It is also easy to see that $\lambda_1\to-f'_1(0)$ as $\sigma\to0^+$ (that is, as $\alpha\to1^-$), hence $0$ is unstable if $\alpha\approx1$, provided the patches of type $1$ support population growth. On the other hand, $\lambda_1\to\min(d_1\pi^2/l_1^2-f'_1(0),-f'_2(0))$ as $\sigma\to+\infty$ (that is, as $\alpha\to0^+$). Therefore, if $f'_1(0)\ge d_1\pi^2/l_1^2$, and even if $f'_2(0)<0$, then~$0$ is still unstable when $\alpha$ is small (and actually whatever the value of $\alpha\in(0,1)$ and the other parameters may be).

Next, we state a Liouville type result for problem \eqref{m-pb-elliptic}.

\begin{theorem}
\label{thm-2.4-uniqueness}
Assume that $f$ satisfies~\eqref{2.5}--\eqref{2.4} and that the zero solution of \eqref{m-pb-elliptic} is unstable $($i.e., $\lambda_1<0$$)$. Then there exists at most one positive and bounded solution $p$ of \eqref{m-pb-elliptic}. Furthermore, such a solution $p$, if any, is periodic and $\inf_{\mathbb{R}}p=\min_{\R}p>0$.
\end{theorem}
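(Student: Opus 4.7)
The plan is to combine the sliding method with a truncated-domain comparison that yields a uniform positive lower bound for any positive bounded solution of \eqref{m-pb-elliptic}. Once uniqueness is established, periodicity and the bound $\inf_\R p=\min_\R p>0$ follow at no extra cost.

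The first step is to show that any positive bounded solution $p$ of \eqref{m-pb-elliptic} satisfies $\inf_\R p>0$. Let $\lambda_1^R$ denote the principal eigenvalue of $\mathcal L_0$ on $(-Rl,Rl)$ with zero Dirichlet data and the interior interface conditions inherited from \eqref{ep-0}; since $\lambda_1^R\to\lambda_1<0$ as $R\to+\infty$, I fix $R$ large enough that $\lambda_1^R<0$. A standard monotone iteration (with subsolution $\epsilon\psi^R$, where $\psi^R$ is the corresponding Dirichlet eigenfunction, and supersolution $M$ from \eqref{2.5}) produces a positive classical Dirichlet solution $q_R$ of the bounded-domain analogue of \eqref{m-pb-elliptic}. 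I then compare $p$ with $q_R$ on $[-Rl,Rl]$ by sliding: setting $\epsilon^*=\sup\{\epsilon\ge 0:\epsilon q_R\le p\text{ on }[-Rl,Rl]\}$, which is positive because $\min_{[-Rl,Rl]}p>0$, one shows $\epsilon^*\ge 1$. Indeed, if $\epsilon^*<1$, \eqref{2.4} makes $\epsilon^* q_R$ a strict subsolution, so $w=p-\epsilon^* q_R\ge 0$ satisfies the linear inequality $-d w''-c(x)w\ge 0$ in each patch together with the interface conditions, while being strictly positive at $\pm Rl$. The strong maximum principle adapted to the interfaces (described below) then forces $w>0$ on the compact interval, hence $\min w>0$, so $\epsilon^*$ can be enlarged, contradicting maximality. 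Thus $p\ge q_R$ on $[-Rl,Rl]$. Applying the same argument to the translates $p(\cdot+kl)$ (still solutions by the $l$-periodicity of $d$ and $f$) and setting $c_R:=\min_{[-l/2,l/2]}q_R>0$, I get $\inf_\R p\ge c_R>0$.

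Given the positive lower bound, uniqueness follows by the classical KPP sliding argument. For two positive bounded solutions $p_1,p_2$, define $\kappa^*=\sup\{\kappa>0:\kappa p_1\le p_2\text{ in }\R\}$, so that $\kappa^*\in(0,+\infty)$ by Step~1. If $\kappa^*<1$, then by \eqref{2.4}, $\kappa^* p_1$ is a strict subsolution in at least one patch type, so $w=p_2-\kappa^* p_1\ge 0$ satisfies $-d w''-c(x)w\ge 0$ in each patch with the interface conditions, with strict inequality in one type. Either $\inf w>0$, contradicting the maximality of $\kappa^*$, or $\inf w=0$; in the latter case I take $y_n$ with $w(y_n)\to 0$, translate by the nearest multiple $k_nl$ of $l$ to $y_n$, and use the a priori $C^{1,\gamma;2,\gamma}$ estimates of Theorem~\ref{thm-m-wellposedness} (applied to stationary solutions) to extract subsequential limits $p_i^\infty$ that are again positive bounded solutions, with $\inf p_i^\infty\ge c_R$. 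The limit $w^\infty=p_2^\infty-\kappa^* p_1^\infty\ge 0$ attains zero, so the SMP with interfaces gives $w^\infty\equiv 0$, i.e., $p_2^\infty=\kappa^* p_1^\infty$. Substituting in \eqref{m-pb-elliptic} yields $\kappa^* f(x,p_1^\infty)=f(x,\kappa^* p_1^\infty)$ for all $x$, which by the strict part of \eqref{2.4} and $\kappa^*<1$ forces $p_1^\infty\equiv 0$ on the corresponding patch type. The interface conditions and ODE uniqueness within each patch then propagate $p_1^\infty\equiv 0$ to all of $\R$, contradicting $\inf p_1^\infty\ge c_R>0$. Hence $\kappa^*\ge 1$, and by symmetry $p_1=p_2$. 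Periodicity is obtained by applying uniqueness to $p$ and $p(\cdot+l)$, after which $\inf p=\min p>0$ is immediate by continuity and positivity on $[0,l]$.

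The main obstacle is establishing the strong maximum principle / Hopf lemma adapted to the discontinuous-flux interfaces of \eqref{m-pb}: if $w\ge 0$ satisfies $-d(x)w''-c(x)w\ge 0$ in each patch with $c$ bounded and the continuity and jump conditions at every point of $S$, then $w\equiv 0$ or $w>0$ in $\R$. Inside a single patch this is the classical SMP (after handling the sign-indefinite zeroth-order coefficient by the splitting $c=c_+-c_-$ and using $w\ge 0$). At an interface $x_0\in S$ where $w(x_0)=0$ and $w$ is not identically zero on one adjacent patch (say the left), the one-sided Hopf lemma yields $w'(x_0^-)<0$; the interface relation $w'(x_0^-)=\sigma w'(x_0^+)$ (or its reciprocal on $S_2$) with $\sigma>0$ then forces $w'(x_0^+)<0$, incompatible with the constraint $w'(x_0^+)\ge 0$ imposed by $w\ge 0$ and $w(x_0)=0$ on the right side. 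Hence $w$ vanishes on both adjacent patches, and inducting across $S$ using the Cauchy data $w(x)=w'(x)=0$ at each successive interface together with ODE uniqueness within each patch yields $w\equiv 0$ globally. This SMP-with-interfaces, used in both Step~1 and Step~2, is the essential adaptation of the classical periodic KPP treatment of \cite{BHR2005-1} to the patchy setting.
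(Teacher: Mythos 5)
Your proposal is correct in substance and follows the same two-step architecture as the paper (a uniform positive lower bound for any positive bounded solution, then a sliding argument for uniqueness), but the lower-bound step is achieved by a genuinely different route, so it is worth comparing.

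For the lower bound, the paper works entirely with the \emph{linearized} problem: it introduces the Dirichlet principal eigenpairs $(\lambda^y_R,\varphi^y_R)$ on $(-R,R)$ for the shifted operator with interfaces at $S-y$, proves $\lambda^y_R\to\lambda_1<0$ uniformly in $y$ (Lemma~\ref{lemma-3.6}), and uses $\kappa\varphi^y_R$ directly as a subsolution under $p(\cdot+y)$ for every real $y$; the lower bound $p(y)\ge\kappa_0\varphi^y_R(0)$ then comes from periodicity and continuity in $y$. You instead build the \emph{nonlinear} Dirichlet solution $q_R$ on $(-Rl,Rl)$ by monotone iteration from the linearized subsolution $\epsilon\psi^R$, then slide $q_R$ under $p$ and transport the bound via the integer translates $p(\cdot+kl)$, which are again solutions since $d$ and $f$ are $l$-periodic and $S+kl=S$. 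This is a valid alternative: it dispenses with the non-integer shifts and hence with the uniformity-in-$y$ part of Lemma~\ref{lemma-3.6} (you only need $\lambda^0_R<0$ for one $R$), at the price of the extra monotone-iteration existence step. In fact, once you have $\psi^R$ and $\lambda^0_R<0$, you could skip the construction of $q_R$ and slide $\kappa\psi^R$ under $p(\cdot+kl)$ directly, which is closer to what the paper does and somewhat shorter. Your uniqueness step is essentially the paper's; your extra move of substituting $p_2^\infty=\kappa^*p_1^\infty$ into the equation and invoking the strict part of~\eqref{2.4} is just a rephrasing of the paper's direct contradiction with the strict sign of the inequality satisfied by $z_\infty$ on the relevant patch type. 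One small imprecision: your appeal to \textit{ODE uniqueness within each patch} in the interface strong maximum principle is out of place, since $w$ satisfies a differential \emph{inequality}, not an ODE; the one-sided Hopf argument you already spell out (zero value, zero inward derivative forced by the flux-matching condition, contradiction with the Hopf lemma unless $w\equiv 0$ in the next patch) is what actually carries the induction across the interfaces and should replace the ODE-uniqueness remark.
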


Under the assumptions of Theorem~\ref{thm-2.1-existence}~(i) and Theorem~\ref{thm-2.4-uniqueness}, we now look at the global attractiveness of the unique positive and bounded stationary solution $p$ of~\eqref{m-pb-elliptic} for the solutions of the Cauchy problem~\eqref{m-pb}--\eqref{d-f}.

\begin{theorem}
\label{thm-long time behavior}
Assume that  $f$ satisfies \eqref{2.5}--\eqref{2.4}. Let $u$ be the solution of the Cauchy problem \eqref{m-pb}--\eqref{d-f} with a nonnegative bounded and continuous initial datum $u_0\not\equiv0$.  
\begin{enumerate}[(i)]
\item If $0$ is an unstable solution of \eqref{m-pb-elliptic} $($i.e., $\lambda_1<0$$)$, then $u(t,\cdot)|_{\bar{I}}\to p|_{\bar I}$ in $C^2(\bar I)$ as $t\to+\infty$ for each patch $I$, where $p$ is the unique positive bounded and periodic solution of \eqref{m-pb-elliptic} given by Theorem~$\ref{thm-2.1-existence}$~{\rm{(i)}} and Theorem~$\ref{thm-2.4-uniqueness}$.\footnote{This statement shows that the solution $u$ converges as $t\to+\infty$ locally uniformly in space to the space-periodic function $p$. For a convergence result to time-periodic solutions for time-periodic quasilinear parabolic equations, we refer to~\cite{BPS92}.}
\item If $0$ is a stable solution of \eqref{m-pb-elliptic} $($i.e., $\lambda_1\ge 0$$)$, then $u(t,\cdot)\to0$ uniformly in $\mathbb{R}$ as $t\to+\infty$.
\end{enumerate}
\end{theorem}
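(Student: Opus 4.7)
The plan is to sandwich $u$ between a periodic super-solution issued from a large constant and (in part~(i)) a periodic sub-solution built from the principal eigenfunction $\phi$ of~\eqref{ep-0}, then identify the resulting limits via the rigidity in Theorems~\ref{thm-2.1-existence}(ii) and~\ref{thm-2.4-uniqueness}. For part~(i), an additional bounded-domain Dirichlet argument transfers the squeeze from periodic to arbitrary initial data. To start, I take $C\ge\max(\|u_0\|_{L^\infty(\R)},K_1,K_2)$, so that by~\eqref{2.5} the constant function $\bar u_0\equiv C$ is an $l$-periodic super-solution of~\eqref{m-pb-elliptic}. Let $\bar u$ be the Cauchy solution starting from $\bar u_0$. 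Periodic invariance from Theorem~\ref{thm-m-wellposedness} makes $\bar u(t,\cdot)$ $l$-periodic; comparing $\bar u$ with the stationary super-solution $C$ and with its time-shifts $\bar u(\cdot+h,\cdot)$ shows that $\bar u\le C$ and that $\bar u$ is non-increasing in $t$. Monotone convergence combined with the $C^{1,\gamma;2,\gamma}$-estimates of Theorem~\ref{thm-m-wellposedness} produces a periodic bounded nonnegative limit $\bar u^\infty$ solving~\eqref{m-pb-elliptic}, with convergence upgraded to uniform on $\R$ by Dini and periodicity. Comparison gives $u\le\bar u$. Part~(ii) then follows at once: when $\lambda_1\ge 0$, Theorem~\ref{thm-2.1-existence}(ii) forces $\bar u^\infty\equiv 0$, so $u(t,\cdot)\to 0$ uniformly.

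For the upper part of the squeeze in part~(i) I repeat the construction with $\underline u_0:=\varepsilon\phi$ and $\varepsilon>0$ small. Using~\eqref{ep-0} one computes
\[
-d(\varepsilon\phi)''-f(x,\varepsilon\phi)=\varepsilon\phi\bigl(f_s(x,0)+\lambda_1\bigr)-f(x,\varepsilon\phi),
\]
and since $f(x,s)/s\to f_s(x,0)$ as $s\to 0^+$ uniformly on each of the two patch types (by $C^1$-regularity of $f_1,f_2$ and periodicity), while $\lambda_1<0$, the right-hand side is $\le 0$ for $\varepsilon$ small. Thus $\varepsilon\phi$ is a periodic sub-solution of~\eqref{m-pb-elliptic}. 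The Cauchy solution $\underline u$ from $\varepsilon\phi$ is then periodic, non-decreasing in $t$, bounded above by $\bar u$, and converges uniformly to a positive bounded periodic solution of~\eqref{m-pb-elliptic}, which Theorem~\ref{thm-2.4-uniqueness} identifies as $p$. Since $\bar u\ge\underline u$ pointwise, $\bar u^\infty\ge p>0$, and Theorem~\ref{thm-2.4-uniqueness} gives $\bar u^\infty=p$; hence $\limsup_{t\to\infty} u(t,x)\le p(x)$ uniformly on $\R$.

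The delicate point is the matching lower bound $\liminf_{t\to\infty} u\ge p$ (locally uniformly), because $u_0$ is not assumed periodic and so $\varepsilon\phi\le u(t,\cdot)$ may fail outside a bounded region. Here the strong maximum principle inside each patch, together with a Hopf-type argument across the interfaces (the flux relation $u_x(t,x^-)=\sigma u_x(t,x^+)$ preserves the sign of the one-sided derivative, contradicting oppositely-signed interior Hopfs at a putative interface zero), gives $u(1,\cdot)>0$ on $\R$, and hence $u(1,\cdot)\ge\delta_R>0$ on every $[-R,R]$. For $R=Nl$ large I introduce the Dirichlet eigenvalue problem for $\mathcal L_0$ on $(-R,R)$ with zero boundary values at $\pm R$ and the same interior interface conditions, with principal pair $(\lambda_1^R,\psi_R)$. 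Standard spectral arguments (using a cutoff of $\phi$ as test function, and monotonicity of $\lambda_1^R$ in $R$) yield $\lambda_1^R\to\lambda_1<0$, so for $R$ large the computation above makes $\varepsilon\psi_R$ a stationary Dirichlet sub-solution on $(-R,R)$, dominated by the unique positive Dirichlet steady state $p_R$. Since $\psi_R(\pm R)=0$ and $u(1,\cdot)$ is bounded below on $[-R,R]$, one has $u(1,x)\ge\varepsilon\psi_R(x)$ there for $\varepsilon$ small, and bounded-domain comparison gives $\liminf_{t\to\infty} u(t,x)\ge p_R(x)$ on $[-R,R]$. A standard monotone iteration yields $p_R\to p$ locally uniformly as $R\to\infty$, which closes the squeeze. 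The upgrade to $C^2(\bar I)$-convergence on each patch is then immediate from the $C^{1,\gamma;2,\gamma}$-bounds of Theorem~\ref{thm-m-wellposedness} and Arzel\`a--Ascoli.

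The principal obstacle is precisely the bounded-domain apparatus used in the last step: one has to build a Dirichlet counterpart of the periodic eigenvalue theory --- existence of $(\lambda_1^R,\psi_R)$, the convergence $\lambda_1^R\to\lambda_1$, existence/uniqueness of the steady state $p_R$, and the local convergence $p_R\to p$ --- all under the non-standard flux-jump interface conditions. Once this ``Dirichlet version'' of the periodic theory is in place, the rest of the argument reduces to a textbook KPP sub-/super-solution iteration.
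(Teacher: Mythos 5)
Your upper‑bound argument (constant super‑solution, monotone decay to a periodic stationary limit, identification with $p$ via $\kappa\phi$) and your proof of part~(ii) coincide with the paper's. Where you diverge is the lower bound in part~(i), and there your route has a real gap.

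You correctly identify the obstruction — $u_0$ need not be periodic, so the periodic sub‑solution $\varepsilon\phi$ cannot be slipped under $u(1,\cdot)$ globally — and you correctly reach for the Dirichlet principal eigenfunction $\psi_R$ on $(-R,R)$ (the paper's $\varphi^0_R$ from problem~\eqref{3.10} with $y=0$). But you then route the argument through a Dirichlet steady state $p_R$ on $(-R,R)$, asserting its existence and uniqueness, the convergence of the Dirichlet evolution to it, and the local convergence $p_R\to p$ as $R\to\infty$. None of that is established anywhere in the paper; it would require re‑proving Theorems~\ref{thm-2.1-existence},~\ref{thm-2.4-uniqueness} and the attractiveness part of Theorem~\ref{thm-long time behavior} in the Dirichlet setting, with the non‑standard flux‑jump interface conditions carried throughout — not a routine step, and substantially more work than the remainder of the argument. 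You flag this yourself, but I think you underestimate how much of a ``Dirichlet theory'' you would be importing.

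The paper sidesteps this entirely. Having chosen $R$ so that $\lambda^0_R<\lambda_1/2<0$ (Lemma~\ref{lemma-3.6}) and $\kappa$ so small that $\kappa\varphi^0_R<\min(p,u(1,\cdot))$ on $[-R,R]$, it extends $\kappa\varphi^0_R$ by zero to a compactly supported datum $v_0\in\mathcal{C}$ on $\R$ and solves the whole‑line Cauchy problem from $v_0$. The key mechanism: by Proposition~\ref{bdd-cp} applied on $[-R,R]$ (with zero Dirichlet exterior) one keeps $v(t,\cdot)\ge\kappa\varphi^0_R$ on $[-R,R]$ for all $t\ge0$, hence $v(h,\cdot)\ge v_0$ on all of $\R$ for every $h>0$, so $v$ is nondecreasing in time, squeezed between $0$ and $p$. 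The Schauder bounds of Theorem~\ref{thm-m-wellposedness} then make $v(t,\cdot)$ converge to a positive bounded stationary solution of~\eqref{m-pb-elliptic} on the whole line, which the already‑proved whole‑line uniqueness Theorem~\ref{thm-2.4-uniqueness} identifies as $p$, and the comparison $v(t,\cdot)\le u(t+1,\cdot)$ closes the lower bound. No bounded‑domain steady states appear. If you replace your $p_R$‑iteration with this extension‑by‑zero device, your proof becomes complete without any new Dirichlet machinery.
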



\subsection{Spreading speeds and pulsating traveling waves}

In this subsection, we assume that the zero solution of \eqref{m-pb-elliptic} is unstable (i.e., $\lambda_1<0$) and that~$f$ satisfies~\eqref{2.5}--\eqref{2.4}. Let $p$ be the unique positive bounded and periodic solution of~\eqref{m-pb-elliptic} obtained from Theorem~\ref{thm-2.1-existence}~(i) and Theorem~\ref{thm-2.4-uniqueness}. After showing in Theorem~\ref{thm-long time behavior}~(i) the attractiveness of $p$, we now want to describe the way the positive steady state $p$ invades the whole domain.
   
Let $\mathcal{C}$ be the space of all bounded and uniformly continuous functions from $\mathbb{R}$ to $\mathbb{R}$ equipped with the compact open topology, i.e., we say that $u_n\to u$ as $n\to+\infty$ in $\mathcal{C}$ when $u_n\to u$ locally uniformly in~$\R$. For $u,v\in\mathcal{C}$, we write $u\ge v$ when $u(x)\ge v(x)$ for all $x\in\mathbb{R}$, $u>v$ when $u\ge v$ and $u\not\equiv v$, and~$u\gg v$ when $u(x)> v(x)$ for all $x\in\mathbb{R}$. Notice that $p\in\mathcal{C}$ is periodic and satisfies~$p\gg 0$. We define
\be\label{defCp}
\mathcal{C}_p=\{v\in \mathcal{C}: 0\le v\le p\}.
\ee
Let $\mathbb{P}$ be the set of all continuous and periodic functions from~$\mathbb{R}$ to~$\mathbb{R}$ equipped with the $L^\infty$-norm,  and $\mathbb{P}^+=\{u\in\mathbb{P} : u\ge 0\}$.

The first result of this section states the existence of a speed of invasion by the state $p$.
 
\begin{theorem}
\label{thm-spreading result}
Assume that $f$ satisfies \eqref{2.5}--\eqref{2.4} and that the zero solution of \eqref{m-pb-elliptic} is unstable $($i.e., $\lambda_1<0$$)$. Then there is an asymptotic spreading speed, $c^*>0$, given explicitly by 
\be\label{formulac*}
c^*=\inf_{\mu>0}\frac{-\lambda(\mu)}{\mu},
\ee
where $\lambda(\mu)$ is the principal eigenvalue of the operator
\begin{align*}
\mathcal{L}_\mu\psi(x) :=-d(x)\psi''(x)+2\mu d(x)\psi'(x)-(d(x)\mu^2+f_s(x,0))\psi(x)~~\text{for }x\in \mathbb{R}\!\setminus\!S,
\end{align*}
acting on the set 
\begin{align*}
E_\mu=\big\{\psi\in C(\mathbb{R}): & ~~\psi|_{\bar I}\in C^2(\bar I)\hbox{ for each patch }I,\ \psi\text{ is periodic in}~\mathbb{R},\\
&~~[-\mu \psi+\psi'](x^-)=\sigma[-\mu \psi+\psi'](x^+)~\text{for}~x\in S_1,\\
&~~\sigma[-\mu \psi+ \psi'](x^-)=[-\mu \psi+\psi'](x^+)~\text{for}~ x\in S_2\big\},
\end{align*}
such that the following statements are valid:
\begin{enumerate}[(i)]
\item if $u$ is the solution to problem~\eqref{m-pb}--\eqref{d-f} with a compactly supported initial condition $u_0\in\mathcal{C}_p$, then $\lim_{t\to+\infty}\,\sup_{|x|\ge ct}u(t,x)=0$ for every $c>c^*$;
\item if $u_0\in\mathcal{C}_p$ with $u_0\not\equiv 0$,  then $\lim_{t\to+\infty}\,\max_{|x|\le ct}|u(t,x)-p(x)|=0$ for every $0\le c<c^*$.
\end{enumerate}
\end{theorem}

It finally turns out that the asymptotic spreading speed $c^*$ is also related to some speeds of rightward or leftward periodic (also called pulsating) traveling waves, whose definition is recalled:

\begin{definition}\label{def4}
A bounded continuous solution $u:\R\times\R\to\R$ of problem \eqref{m-pb}--\eqref{d-f} is called a periodic rightward traveling wave connecting $p(x)$ to $0$ if it has the form $u(t,x)=W(x-ct,x)$, where $c\in\R$ and the function $W:\R\times\R\to\R$ has the properties: for each $s\in\R$ the map $x\mapsto W(x+s,x)$ is continuous\footnote{Notice that the continuity of $x\mapsto W(x+s,x)$ is automatic if $c\neq0$, since $u$ is assumed to be continuous itself in $\R\times\R$.} and the map $x\mapsto W(s,x)$ is periodic, and for each $x\in\mathbb{R}$ the map $s\mapsto W(s,x)$ is decreasing with $W(-\infty,x)=p(x)$ and $W(+\infty,x)=0$.\par
Similarly, a bounded continuous solution $u:\R\times\R\to\R$ of problem \eqref{m-pb}--\eqref{d-f} is called a periodic leftward traveling wave connecting $0$ to $p(x)$ if it has the form $u(t,x)=W(x+ct,x)$, where $c\in\R$ and the function $W:\R\times\R\to\R$ has the properties: for each $s\in\R$ the map $x\mapsto W(x+s,x)$ is continuous and the map $x\mapsto W(s,x)$ is periodic, and for each $x\in\mathbb{R}$ the map $s\mapsto W(s,x)$ is increasing with $W(-\infty,x)=0$ and $W(+\infty,x)=p(x)$.
\end{definition}
 
The following result shows that   the asymptotic spreading speed $c^*$ given in Theorem \ref{thm-spreading result} coincides with  minimal speeds of periodic traveling waves in the positive and negative directions. 

\begin{theorem}\label{thm-existence of PTW}
 Assume that the zero solution of \eqref{m-pb-elliptic} is unstable $($i.e., $\lambda_1<0$$)$ and that $f$ satisfies \eqref{2.5}--\eqref{2.4}.  Let $c^*$  be the  asymptotic spreading speed given in Theorem~$\ref{thm-spreading result}$. Then the following statements are valid:
\begin{enumerate}[(i)]
\item problem \eqref{m-pb}--\eqref{d-f} has a periodic rightward traveling wave $W(x-ct,x)$ connecting $p(x)$ to $0$, in the sense of Definition~$\ref{def4}$, if and only if $c\ge c^*$;
\item problem \eqref{m-pb}--\eqref{d-f} has a periodic leftward traveling wave $W(x+ct,x)$ connecting $0$ to~$p(x)$, in the sense of Definition~$\ref{def4}$, if and only if $c\ge c^*$.
\end{enumerate}
\end{theorem}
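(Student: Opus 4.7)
My plan is to view \eqref{m-pb}--\eqref{d-f} as generating a monotone, compact, periodic semiflow $Q_t[u_0](x):=u(t,x)$ on $\mathcal{C}_p$, and to invoke the abstract pulsating-wave theory of Liang and Zhao for monotone semiflows on a periodic habitat. Theorem~\ref{thm-m-wellposedness} supplies global well-posedness, order preservation, continuity of $u_0\mapsto Q_t[u_0]$ in the compact-open topology, and patch-wise $C^{1,\gamma;2,\gamma}_{t;x}$ bounds which give compactness of $Q_t$ for $t>0$. The $l$-periodicity of $d$ and $f$ forces $Q_t$ to commute with integer-$l$ translations, Theorems~\ref{thm-2.1-existence}, \ref{thm-2.4-uniqueness} and~\ref{thm-long time behavior}(i) identify $0$ and $p$ as the only fixed points in $\mathcal{C}_p$ with $p$ globally attracting $\mathcal{C}_p\setminus\{0\}$, and assumption~\eqref{2.4} supplies the subhomogeneity $Q_t[\lambda u_0]\geq \lambda Q_t[u_0]$ for $\lambda\in[0,1]$. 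With these hypotheses verified, the abstract framework applies.

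For the nonexistence direction I argue by contradiction. Assume a rightward wave $W(x-ct,x)$ exists with $c<c^*$, fix $c'\in(c,c^*)$ and pick a compactly supported $u_0\in\mathcal{C}_p\setminus\{0\}$. Since $W(s,x)\to p(x)$ as $s\to-\infty$ uniformly in $x$ (by the periodicity of $W(s,\cdot)$ and the limit condition), a sufficiently negative shift $s_0$ yields $W(\cdot-s_0,\cdot)\geq u_0$ pointwise; comparison then gives $u(t,x)\leq W(x-s_0-ct,x)$ for all $t>0$ and $x\in\R$. Taking $x=c't$ produces $u(t,c't)\leq W((c'-c)t-s_0,c't)\to 0$, which contradicts Theorem~\ref{thm-spreading result}(ii) together with $\min_{\R}p>0$ (Theorem~\ref{thm-2.4-uniqueness}), since these force $u(t,c't)\to p(c't)\geq\min_{\R}p>0$. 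The leftward case is symmetric, with $x=-c't$.

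For existence at $c\geq c^*$ the Liang--Zhao theorem delivers a rightward pulsating wave at every $c\geq c^*_{\mathrm{abs}}$, where $c^*_{\mathrm{abs}}$ is the abstract spreading speed of $Q_t$. To identify $c^*_{\mathrm{abs}}=c^*$: Theorem~\ref{thm-spreading result}(i)--(ii) shows that $c^*$ is the exact spreading speed of $Q_t$ on compactly supported data, which gives both inequalities; equivalently, the abstract variational formula for $c^*_{\mathrm{abs}}$ reduces to $\inf_{\mu>0}(-\lambda(\mu)/\mu)$ by feeding the exponential ansatz $u=e^{-\mu(x-ct)}\varphi_\mu(x)$, with $\varphi_\mu\in E_\mu$ solving $\mathcal{L}_\mu\varphi_\mu=\lambda(\mu)\varphi_\mu$, into the linearization of $Q_t$. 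Leftward waves follow from the same argument applied to the spatially reflected semiflow, whose spreading speed is also $c^*$ by the two-sided spreading statement of Theorem~\ref{thm-spreading result}.

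The genuine difficulty lies in producing the wave at the critical speed $c=c^*$. For $c>c^*$ the abstract result is equivalent to a hands-on sub-/super-solution construction: the function $\min\{p(x),\beta e^{-\mu(x-ct)}\varphi_\mu(x)\}$ is a supersolution whenever $c=-\lambda(\mu)/\mu$, a small truncation of the principal eigenfunction of $\mathcal{L}_0$ (which is available because $\lambda_1<0$) provides a compactly supported subsolution, and monotone iteration converges to $W_c$ with the required asymptotics. At $c=c^*$ no such exponential separation is available, and $W_{c^*}$ has to be obtained as a subsequential locally uniform limit of waves $W_{c_n}$ with $c_n\downarrow c^*$, suitably normalized, e.g., by $W_n(0,0)=p(0)/2$. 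The patch-wise regularity of Theorem~\ref{thm-m-wellposedness} allows one to pass the interface conditions in \eqref{m-pb} to the limit, while uniform exponential decay of $W_n$ at rates $\mu_n\to\mu^*$ (the minimizer of $\mu\mapsto -\lambda(\mu)/\mu$) rules out the degenerate limits $W\equiv 0$ and $W\equiv p$.
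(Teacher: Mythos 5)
Your overall route is the one the paper takes: establish that $\{Q_t\}_{t\ge0}$ is a monotone, compact, subhomogeneous, $l\Z$-periodic semiflow on $\mathcal{C}_p$ with exactly the two fixed points $0$ and $p$ (this is Proposition~\ref{prop-semiflow} together with properties (E1)--(E5)), identify the abstract spreading speed with $\inf_{\mu>0}(-\lambda(\mu)/\mu)=c^*_+=c^*_-$ via Theorem~\ref{thm-formula of asp}, and then invoke \cite[Theorems 5.2 and 5.3]{LZ2010} to get existence of pulsating waves for all $c\ge c^*$ and nonexistence for $c<c^*$. Your standalone nonexistence-by-comparison argument and your suggested monotone-iteration construction at $c>c^*$ followed by a limiting argument at $c=c^*$ are both correct in spirit but superfluous, since the Liang--Zhao theorem already delivers the nonexistence half and the full range $c\ge c^*$, including the critical speed, in one stroke.

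There is, however, a genuine gap you leave open, and it is exactly the step the paper singles out as ``left to show.'' The abstract theory of \cite{LZ2010} only produces pulsating waves $W(s,x)$ that are \emph{nonincreasing} in $s$ (equivalently, $t\mapsto u(t,x)$ is nondecreasing), whereas Definition~\ref{def4} demands that $s\mapsto W(s,x)$ be \emph{decreasing}, i.e.\ $t\mapsto u(t,x)$ strictly increasing. Your proposal never upgrades nonincreasing to strictly monotone. The paper closes this by a strong comparison argument: writing $u(t,x)=W(x-ct,x)$ and using the Schauder estimates of Theorem~\ref{thm-m-wellposedness} together with the comparison principle on the whole line (Proposition~\ref{cp}), one shows that for each $h>0$ either $u(\cdot+h,\cdot)\equiv u$ or $u(\cdot+h,\cdot)>u$ on $(t_0,+\infty)\times\R$; the first alternative is excluded by $u(-\infty,x)=0<p(x)=u(+\infty,x)$, whence $u$ is strictly increasing in $t$ and $W$ is strictly decreasing in its first argument. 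Without this dichotomy argument (or some substitute), the object produced by the abstract theorem does not yet satisfy Definition~\ref{def4}, so the existence half of (i) and (ii) is not finished.
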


\begin{remark}{\rm It is known that for the standard spatially periodic Fisher-KPP problem~\eqref{RD-periodic} with $N=1$, the variational characterization of minimal speeds in terms of a family of principal eigenvalues implies that the minimal wave speeds of rightward and leftward pulsating waves are the same. Theorem~$\ref{thm-existence of PTW}$ shows that this property still holds true for our one-dimensional  patchy periodic habitat, with nonstandard movement behavior at interfaces. }
\end{remark}

\begin{remark}{\rm After the completion of this work, we learned  about the work~\cite{SKW2015} by Shigesada, Kawasaki and Weinberger, on reaction-diffusion-advection models in periodic environments, with advection given by a gradient-based taxis. In the case of patchy environments with logistic growth rates, the piecewise constant discontinuous coefficients are approximated by periodic continuous coefficients converging locally uniformly outside of the set of discontinuities. For the approximated problems, the existence of leftward and rightward traveling waves connecting $0$ and the unique positive periodic steady state follows from~\cite{W2002}. Then the minimal speeds of traveling waves of the approximated problems are shown in~\cite{SKW2015} to converge to a quantity, which we here call~$c^*_{SKW}$. Interface conditions of type~\eqref{their model-2} are also derived for the limits of exponential tails $e^{-s(\pm x-ct)}g(x)$ (with periodic $g$) solving linearizations of the approximated equations. A detailed analysis of the dependence of~$c^*_{SKW}$ on the various parameters, in particular the diffusions and the gradient-based taxis, is also carried out in~\cite{SKW2015}. Approximating our equivalent patch models \eqref{their model-1}--\eqref{their model-2} and~\eqref{m-pb}--\eqref{d-f} by problems with continuously interpolated diffusion and growth rates, with additional advection terms supported in smaller and smaller neighborhoods of the interfaces as in~\cite{SKW2015}, would certainly be an interesting problem. But proving rigorously the well-posedness of~\eqref{m-pb}--\eqref{d-f} through this method, and obtaining the continuity and Schauder estimates, as in Theorem~\ref{thm-m-wellposedness}, would require a serious analysis, though we expect it would work. We have here chosen an alternate method: though we still consider approximated problems, this time by truncation in bounded intervals, we solve directly the patch problem with discontinuous coefficients, by developing the semigroup theory for this problem. We also point out that Theorem~\ref{thm-m-wellposedness} plays a crucial role in the derivation of all further results, such as the attractiveness of the positive steady state~$p$, whose existence and uniqueness is also proved for~\eqref{m-pb}--\eqref{d-f}. By using the theory of dynamical systems as in~\cite{LZ2007,W2002}, Theorem~\ref{thm-m-wellposedness} also allows us to prove the existence of a spreading speed~$c^*$ for the solutions of~\eqref{m-pb}--\eqref{d-f}, and to show that $c^*$ is the minimal speed of leftward and rightward traveling waves connecting $p$ and $0$ for~\eqref{m-pb}--\eqref{d-f}. As expected, it turns out from~\eqref{formulac*} and from~\cite[Formulas (21)--(22)]{SKW2015} that~$c^*=c^*_{SKW}$. In addition to the previous results, our analysis thus confirms that the limit $c^*_{SKW}$ of minimal speeds of traveling waves of approximated smoother problems is really the minimal speed~$c^*$ of traveling waves of the original patch model, as well as the spreading speed for the solutions of the Cauchy problem~\eqref{m-pb}--\eqref{d-f}. On the other hand, the analysis in~\cite{SKW2015} of the dependence of $c^*_{SKW}$ on the parameters $d_{1,2}$, $f_{1,2}$, $l_{1,2}$ and the preference parameter between the two types of patches provides additional relevant informations on such models.}
\end{remark}

\noindent{\bf Outline of the paper.} The rest of the paper is organized as follows. In the next section, we give the proof of Theorem~\ref{thm-m-wellposedness} on the well-posedness of the Cauchy probldem~\eqref{m-pb}--\eqref{d-f}. Section~\ref{Sec-thm-Liouville} is devoted to the study of the stationary problem \eqref{m-pb-elliptic} and we give the proofs of Theorems \ref{thm-2.1-existence} and \ref{thm-2.4-uniqueness}. In Section \ref{Sec-thm-long time behavior},  we prove Theorem \ref{thm-long time behavior} on the large-time behavior of the evolution problem. Finally, Section \ref{Sec-thm-spreading properities} is devoted to the proofs of Theorems \ref{thm-spreading result} and \ref{thm-existence of PTW}, based on the abstract monotone semiflow method developed in \cite{LZ2007,LZ2010,W2002}. Lastly, the appendix is devoted to giving supplementary comparison results concerning finitely many patches, which play an essential role in the well-posedness argument in Section \ref{Sec-thm-wellposedness}.


\section{Well-posedness of the Cauchy problem \eqref{m-pb}--\eqref{d-f}: proof of Theorem \ref{thm-m-wellposedness}}\label{Sec-thm-wellposedness}

In this section, we establish the well-posedness of the Cauchy problem \eqref{m-pb}--\eqref{d-f} with nonnegative, bounded and continuous initial data. We first show the existence of classical solutions based on a semigroup argument and an approximation approach. Then we prove that the solutions are unique and depend monotonically and continuously on the initial data.


\subsection{Truncated problem}\label{sec-2.1}

Fix $n\in \mathbb{N}$. We consider the following truncated problem of \eqref{m-pb}--\eqref{d-f}  in the finite interval $[-nl,nl]$, which consists of $4n$ disjoint patches (see Figure~2):
\begin{equation}
\label{tp-1}
\left\{\begin{aligned}
\frac{\partial u_{1m}}{\partial t}&=d_1 \frac{\partial^2 u_{1m}}{\partial x^2}\!+\!f_1(u_{1m}),~ t>0,~x\!\in\!(ml-l_1,ml),~m=0, \pm 1,\cdots, \pm (n-1),n,\cr
\frac{\partial u_{2m}}{\partial t}&=d_2 \frac{\partial^2 u_{2m}}{\partial x^2}\!+\!f_2(u_{2m}),~t>0,~x\!\in\!(ml,ml+l_2),~m=0, \pm 1,\cdots, \pm (n-1),-n,\cr
\end{aligned}\right.
\end{equation}
together with interface conditions 
\begin{equation}
\left\{\begin{aligned}
\label{tp-2}
u_{1m}(t,x^-)&=u_{2m}(t,x^+),\qquad~~~~(u_{1m})_x(t,x^-)=\sigma(u_{2m})_x(t,x^+),\\
&\qquad\qquad\qquad\qquad\qquad\ \ t> 0,~x=ml,~m=0, \pm 1,\cdots, \pm (n-1),\\
u_{2m}(t,x^-)&=u_{1(m+1)}(t,x^+),~~~\sigma (u_{2m})_x(t,x^-)=  (u_{1(m+1)})_x(t,x^+),\\
&\qquad\qquad\qquad\qquad\qquad\ \ t>0,~x=ml+l_2,~m=0, \pm 1,\cdots, \pm (n-1),-n,
\end{aligned}\right.
\end{equation}
and boundary conditions at $x=\pm nl$:
\begin{equation}
\begin{aligned}
\label{tp-3}
u_{1n}(t,(-nl)^+)=u_{2(-n)}(t,(nl)^-)=0,~~t> 0.
\end{aligned}
\end{equation}
For consistency of notations, we set
$$\left\{\baa{ll}
I_{1m}=(ml-l_1,ml) & \hbox{for $m\in J_1=\{0,\pm 1,...,\pm (n-1),n\}$},\vspace{3pt}\\
I_{2m}=(ml,ml+l_2) &\hbox{for $m\in J_2=\{0,\pm 1,...,\pm (n-1),-n\}$}.\eaa\right.$$
We number these $4n$ patches from left to right by $I_{2(-n)}$, $I_{1(-n+1)}$,..., $I_{10}$, $I_{20}$,..., $I_{2(n-1)}$, $I_{1n}$, so that
$$[-nl,nl]=\Big(\bigcup_{j\in J_1}\overline{I_{1j}}\Big)\cup \Big(\bigcup_{j\in J_2}\overline{I_{2j}}\Big).$$
 
\begin{figure}[H]
\centering
\includegraphics[scale=0.6]{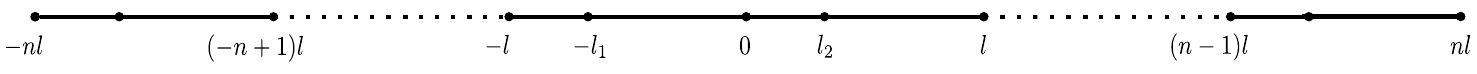}\caption{Truncated interval $[-nl, nl]$.}
\end{figure}

For a solution $(u_{2(-n)},\ldots,u_{1n})$ of~\eqref{tp-1}--\eqref{tp-3}, we define $u:(0,+\infty)\times[-nl,nl]\to\R$ such that, for $t>0$,
\be\label{tp-4}\left\{\baa{l}
u(t,x)=u_{1j}(t,x)\hbox{ if $x\in I_{1j}$ with $j\in J_1$},\vspace{3pt}\\
u(t,x)=u_{2j}(t,x)\hbox{ if $x\in I_{2j}$ with $j\in J_2$},\vspace{3pt}\\
u(t,\cdot)\hbox{ is extended by continuity at the interior interfaces and by $0$ at $\pm nl$}.\eaa\right.
\ee
We finally set 
\be\label{defC0}
\mathcal{C}_0=\big\{\varphi\in C([-nl,nl]):\varphi(-nl)=\varphi(nl)=0\big\},
\ee
equipped with the sup norm. 

\begin{definition}\label{def2}
For $T\in(0,+\infty]$, we say that a continuous function $u:[0,T)\times[-nl,nl]\to\R$ $($notice that $u$ is continuous up to $t=0$$)$ is a classical solution  to the truncated problem~\eqref{tp-1}--\eqref{tp-4} in $[0,T)\times[-nl,nl]$ with an initial condition $u_0\in\mathcal{C}_0$, if $u(0,x)=u_0(x)$ for all $x\in[-nl,nl]$, if~$u|_{(0,T)\times\bar I}$ is of class $C^{1;2}_{t;x}\big((0,T)\times \bar I\big)$ for each patch $I\subset[-nl,nl]$, and if all identities in~\eqref{tp-1}--\eqref{tp-4} are satisfied pointwise for $0<t<T$.
\end{definition}

\begin{theorem}
\label{tp-wellposedness}
Under the assumption~\eqref{2.5}, the Cauchy problem \eqref{tp-1}--\eqref{tp-4} with a nonnegative initial condition $u_0\in\mathcal{C}_0$ admits a unique bounded classical solution $u:[0,+\infty)\times[-nl,nl]\to\R$. Furthermore,
\be\label{umax}
0\le u\le\max\big(K_1,K_2,\|u_0\|_{L^\infty(-nl,nl)}\big)\ \hbox{ in $[0,+\infty)\times[-nl,nl]$}.
\ee
If $0\le u_0\le v_0$ in $[-nl,nl]$ with $u_0,v_0$ in~$\mathcal{C}_0$, then the solutions $u$ and $v$ with respective initial conditions $u_0$ and $v_0$ satisfy $u\le v$ in~$[0,+\infty)\times[-nl,nl]$.
\end{theorem}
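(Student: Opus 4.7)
The plan is to combine a semigroup argument for the linear part in a suitably weighted Hilbert space with truncation of the nonlinearity and the comparison principle from the Appendix.

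\textbf{Weighted variational setting.} The interface conditions \eqref{tp-2} are not co-normal with respect to the Lebesgue measure, but they become natural once we equip $(-nl,nl)$ with the weight $\rho(x)=k$ on type-1 patches and $\rho(x)=1$ on type-2 patches, with $k$ as in \eqref{k}. The identity $kd_1\sigma=d_2$ coming from \eqref{sigma} ensures that when one integrates $-d(x)u''$ by parts patch-by-patch against a test function $\varphi\in H^1_0(-nl,nl)$ (automatically continuous across interfaces and vanishing at $\pm nl$), the boundary contributions $[-\rho_i d_i u'(x^-)+\rho_j d_j u'(x^+)]\varphi(x)$ at every interior interface cancel precisely under \eqref{tp-2}. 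Hence the symmetric bilinear form
$$B(u,v)=\int_{-nl}^{nl}\rho(x)\,d(x)\,u'(x)\,v'(x)\,dx$$
is continuous and coercive on $H^1_0(-nl,nl)$ with respect to the weighted $L^2_\rho(-nl,nl)$-norm. Its associated positive self-adjoint operator $A$ acts by $Au=-d(x)u''$ with $D(A)$ consisting of those $u\in H^1_0$ whose restriction to each patch $I$ lies in $H^2(I)$ and satisfies \eqref{tp-2}. Then $-A$ generates an analytic $C_0$-semigroup $\{S(t)\}_{t\geq 0}$ on $L^2_\rho$ with $S(t)L^2_\rho\subset\bigcap_{k\geq 0}D(A^k)$ for every $t>0$; iterated elliptic regularity for the transmission problem then yields $S(t)u_0\in C^\infty(\bar I)$ for each patch $I$.

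\textbf{Existence, $L^\infty$ bound, global extension.} Set $M_0=\max(K_1,K_2,\Vert u_0\Vert_{L^\infty(-nl,nl)})$ and replace each $f_i$ by a globally Lipschitz $\tilde f_i$ coinciding with $f_i$ on $[0,M_0]$ and satisfying $\tilde f_i(0)=0$ and $\tilde f_i(s)\leq 0$ for $s\geq M_0$. A standard contraction argument applied to the Duhamel formula
$$u(t)=S(t)u_0+\int_0^t S(t-s)\,\tilde f(\cdot,u(s))\,ds$$
in $C([0,T_0];L^2_\rho)$ produces a unique local mild solution; parabolic Schauder estimates on each patch, combined with the transmission regularity inherited from $D(A^k)$, upgrade it to a classical solution in the sense of Definition~\ref{def2} with the required Hölder bounds. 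Continuity at $t=0$ follows from $u_0\in\mathcal{C}_0$ by approximation of $u_0$ by elements of $D(A)$ combined with the comparison principle. Since the constants $0$ and $M_0$ are respectively a sub- and a super-solution for the truncated problem, the comparison principle from the Appendix gives $0\leq u\leq M_0$, whence $\tilde f(\cdot,u)=f(\cdot,u)$ and the bound \eqref{umax} holds. Time-independence of this bound iterates the construction to $[0,+\infty)$.

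\textbf{Uniqueness, monotonicity, and the main obstacle.} Uniqueness and the monotone dependence on $u_0$ both reduce to comparison: if $u_0\leq v_0$ in $\mathcal{C}_0$, then $w=v-u$ solves a linear patch problem with $w(0,\cdot)\geq 0$ and a bounded zeroth-order coefficient controlled by the Lipschitz constant of $\tilde f$ on $[0,M_0]$, hence stays nonnegative by the same appendix tool. The main technical obstacle throughout is parabolic regularity across interfaces, where the flux-jump condition $u_x(x^-)=\sigma u_x(x^+)$ falls outside the scope of classical Schauder theory. I would handle it by leaning on the analyticity of $S(t)$ and iterated elliptic regularity for the stationary transmission problem to propagate smoothness from the linear semigroup through the Duhamel iteration, thereby promoting the mild solution to a classical one on each closed patch with the stated $C^{1,\gamma;2,\gamma}_{t;x}$-bounds uniform for $t\geq\tau>0$.
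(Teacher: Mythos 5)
Your proposal follows essentially the same route as the paper's own proof: a weighted Hilbert space in which the bilinear form is symmetric (your weight $\rho$ equals $k$ times the paper's, which is immaterial), the Lax--Milgram/maximal monotone construction of the analytic semigroup generated by $-A$, replacement of the $f_i$ by globally Lipschitz truncations, a Duhamel/contraction argument for the mild solution, promotion to a classical solution via the smoothing of the analytic semigroup and elliptic transmission regularity, a sandwich argument using $D(A)$-data to get continuity at $t=0$, and Proposition~\ref{bdd-cp} for the $L^\infty$-bound, comparison, and uniqueness. The only organizational difference is that you truncate the nonlinearity once at the outset (on $[0,M_0]$, which is in fact the cleaner choice), whereas the paper treats the globally Lipschitz case in a first step and then truncates; this is cosmetic. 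The step you flag as the main obstacle, the upgrade from mild to classical with uniform patchwise H\"older bounds, is indeed the crux: the paper makes it precise via fractional power spaces $\mathcal D(A^\beta)$ and the Sobolev-type embedding into $\mathcal C_0\cap C^{0,\delta}$ together with Henry's and Lunardi's regularity theorems, which is exactly the machinery your ``iterated elliptic regularity through the Duhamel iteration'' gestures at.
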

 
The uniqueness and comparison properties immediately follow from Proposition \ref{bdd-cp} in the appendix. In what follows, we prove the existence of a bounded classical solution to \eqref{tp-1}--\eqref{tp-4}, relying on semigroup theory. To do so, we first introduce some auxiliary spaces and recast the truncated Cauchy problem into the abstract form:
\begin{equation}
\label{abstract-nonlinear}
\begin{cases}
\displaystyle\frac{dU}{dt}+AU=f(U),\ \ t>0,\\
U(0)=U_0,
\end{cases}
\end{equation}
where $U=(u_{2(-n)},\cdots,u_{1n})^T$, $U_0=(u_0|_{(-nl,-nl+l_2)},\cdots,u_0|_{(nl-l_1,nl)})^T$, and $A$ and $f(U)$ are defined by
\begin{align}
\label{defAf}
A=\begin{pmatrix}
-d_2\partial_{xx} & & & &\\
&-d_1\partial_{xx}& & & \\
& &\ddots& &\\
& & & -d_2\partial_{xx}& \\
& & & & -d_1\partial_{xx}
\end{pmatrix}_{4n\times 4n}\!\!\hbox{and }~~f(U)=\begin{pmatrix}
f_2(u_{2(-n)})\\
f_1(u_{1(-n+1)})\\
\vdots\\
f_2(u_{2(n-1)})\\
f_1(u_{1n})
\end{pmatrix}\!.
\end{align}
Set
\begin{align*}
X&=L^2(I_{2(-n)})\times\cdots\times L^2(I_{1n}),
\end{align*}
with elements viewed as column vectors. With a slight abuse of notation, $X$ can be identified with~$L^2(-nl,nl)$. We then define an inner product in $X$ as follows:
\begin{align}
\label{inner product on X}
\langle U,V\rangle_X=\sum_{j\in J_1} \int_{I_{1j}} u_{1j}v_{1j}+\frac{1}{k}\sum_{j\in J_2} \int_{I_{2j}} u_{2j}v_{2j},
\end{align}
which induces the norm $U\mapsto\|U\|_X=\sqrt{\langle U,U\rangle_X}$ and makes $X$ a Hilbert space.\footnote{We recall that $k>0$ is given in~\eqref{k}. In all integrals, we integrate with respect to the one-dimensional Lebesgue measure.} We also introduce other Hilbert spaces
$$\left\{\baa{l}
\mathcal{H}^1=\big\{(u_{2(-n)},\ldots,u_{1n})^T\in H^1(I_{2(-n)})\times\cdots\times H^1(I_{1n}),\vspace{2pt}\\
\qquad\qquad u_{1m}(x)=u_{2m}(x),\ \ \ \ \ \ x=ml,\ \ \ \ \ \ \, m=0,\pm 1,\ldots, \pm(n-1),\vspace{2pt}\\
\qquad\qquad u_{2m}(x)=u_{1(m+1)}(x),\ x=ml+l_2,\ m=0,\pm 1,\ldots, \pm(n-1),-n\big\},\vspace{4pt}\\
\mathcal{H}^1_0=\big\{(u_{2(-n)},\ldots, u_{10}, u_{20},\ldots, u_{1n})^T\in \mathcal{H}^1:u_{2(-n)}(-nl)=u_{1n}(nl)=0\big\},\eaa\right.$$
with elements viewed as column vectors, equipped with the norm
\begin{align*}
\Vert U \Vert_{\mathcal{H}^1}=\Vert U \Vert_{\mathcal{H}^1_0}=\sqrt{\sum_{j\in J_1} \Vert u_{1j}\Vert^2_{H^1(I_{1j})}+\frac{1}{k}\sum_{j\in J_2} \Vert u_{2j}\Vert^2_{H^1(I_{2j})}}.
\end{align*}
From the Sobolev embeddings and with a slight abuse of notation, $\mathcal{H}^1$ and $\mathcal{H}^1_0$ can be identified with $H^1(-nl,nl)$ and $H^1_0(-nl,nl)$, respectively, and viewed as subsets of~$C([-nl,nl])$ and~$\mathcal{C}_0$, respec\-tively, with definition~\eqref{defC0}. Furthermore, in $\mathcal{H}^1_0$ the norms $\|U\|_{\mathcal{H}^1_0}$ and $\|U'\|_X$ are equivalent, from Poincar\'e's inequality. We finally define the Hilbert space
$$\mathcal{H}^2=H^2(I_{2(-n)})\times\cdots\times H^2(I_{1n}),$$
with elements viewed as column vectors, equipped with the norm
\begin{align*}
\Vert U \Vert_{\mathcal{H}^2}=\sqrt{\sum_{j\in J_1} \Vert u_{1j}\Vert^2_{H^2(I_{1j})}+\frac{1}{k}\sum_{j\in J_2} \Vert u_{2j}\Vert^2_{H^2(I_{2j})}}\,,
\end{align*}
and the subspace
\be\label{defDA}\baa{l}
\mathcal{D}(A)=\bigg\{(u_{2(-n)},\ldots, u_{1n})^T\in \mathcal{H}^2\cap\mathcal{H}^1_0~:\\   
\qquad\qquad\ \ (u_{1m})'(x)=\sigma(u_{2m})'(x),\ \ \ \ \ \,x=ml,\ \ \ \ \ \ \ m=0,\pm1,\ldots,\pm(n-1),\vspace{3pt}\\
\qquad\qquad\ \ \sigma (u_{2m})'(x)=  (u_{1(m+1)})'(x),\ x=ml+l_2,\ m=0,\pm1,\ldots,\pm(n-1),-n\bigg\}.\eaa
\ee
From the Sobolev embeddings, the set $\mathcal{H}^2$ can be viewed as a subset of $C^1(\overline{I_{2(-n)}})\times\cdots\times C^1(\overline{I_{1n}})$ and, with a slight abuse of notation, it can also be identified with the set of $\varphi$ in $L^2(-nl,nl)$ such that $\varphi|_I\in H^2(I)$ for each patch $I\subset[-nl,nl]$. As for $\mathcal{D}(A)$, one has $\mathcal{D}(A)\subset\mathcal{H}^2\cap\mathcal{H}^1_0\subset X$ and, from the Sobolev embeddings, $\mathcal{D}(A)$ is a Banach space when endowed with the norm $\|\ \|_{\mathcal{H}^2}$. With a slight abuse of notation, $\mathcal{D}(A)$ can also be identified with the set of $\varphi$ in $H^1_0(-nl,nl)$ such that~$\varphi|_I\in H^2(I)$ for each patch $I\subset[-nl,nl]$ and $\varphi$ satisfies the above flux conditions at the interior interfaces.

The proof of the well-posedness of the Cauchy problem~\eqref{tp-1}--\eqref{tp-4} is based on the following auxiliary lemma.

\begin{lemma}\label{lemma-A maximal monotone}
The linear operator $A:\mathcal{D}(A)\subset X \to X$ is symmetric maximal monotone, and~$-A$ is the infinitesimal generator of an analytic semigroup on $X$. 
\end{lemma}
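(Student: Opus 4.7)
The plan is to exploit the fact that the weight $1/k$ on type-2 patches in the inner product~\eqref{inner product on X} is chosen precisely so that, combined with $\sigma=d_2/(kd_1)$, it makes the boundary terms at every interior interface cancel upon integration by parts. I will first verify that $A$ is symmetric and monotone, then invoke Lax--Milgram to obtain $\mathrm{Range}(I+A)=X$, and finally deduce the analytic semigroup property from the classical fact that any self-adjoint nonnegative operator on a Hilbert space generates an analytic semigroup.

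For symmetry, I take $U,V\in\mathcal{D}(A)$ and compute $\langle AU,V\rangle_X$ by integrating by parts on each patch. On a type-1 patch $I_{1j}$ this produces $\int_{I_{1j}}d_1 u_{1j}'v_{1j}'$ plus the boundary term $-d_1[u_{1j}'v_{1j}]_{\partial I_{1j}}$; on a type-2 patch $I_{2j}$, thanks to the weight $1/k$, it produces $\frac{1}{k}\int_{I_{2j}}d_2 u_{2j}'v_{2j}'$ plus the boundary term $-\frac{d_2}{k}[u_{2j}'v_{2j}]_{\partial I_{2j}}$. At $x=\pm nl$ all boundary contributions vanish because $U,V\in\mathcal{H}^1_0$. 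At an interior interface $x=ml\in S_1$, using the continuity $v_{1m}(ml^-)=v_{2m}(ml^+)$ and the flux condition $u'_{1m}(ml^-)=\sigma u'_{2m}(ml^+)$ together with $\sigma d_1=d_2/k$, the two contributions cancel exactly; the same cancellation occurs at $x=ml+l_2\in S_2$. Hence
\[
\langle AU,V\rangle_X=\sum_{j\in J_1}\int_{I_{1j}}d_1 u_{1j}'v_{1j}'+\frac{1}{k}\sum_{j\in J_2}\int_{I_{2j}}d_2 u_{2j}'v_{2j}',
\]
which is symmetric in $(U,V)$ and nonnegative when $U=V$, so $A$ is symmetric and monotone.

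For maximal monotonicity, I introduce the continuous bilinear form on $\mathcal{H}^1_0$
\[
a(U,V)=\langle U,V\rangle_X+\sum_{j\in J_1}\int_{I_{1j}}d_1 u_{1j}'v_{1j}'+\frac{1}{k}\sum_{j\in J_2}\int_{I_{2j}}d_2 u_{2j}'v_{2j}',
\]
which is coercive thanks to the equivalence of $\|U\|_{\mathcal{H}^1_0}$ with $\|U'\|_X$ (Poincar\'e) and the uniform positivity of $d_1,d_2,1/k$. Given $F\in X$, Lax--Milgram provides $U\in\mathcal{H}^1_0$ with $a(U,V)=\langle F,V\rangle_X$ for every $V\in\mathcal{H}^1_0$. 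Choosing test functions $V$ supported in a single patch gives $-d_i u_i''+u_i=f_i$ in the sense of distributions, so $u_i\in H^2$ on each patch and $U\in\mathcal{H}^2$. Reintegrating by parts with a general $V\in\mathcal{H}^1_0$ and using the patch equations leaves only interior interface contributions, which collect as $v_{1m}(ml^-)\bigl(d_1 u'_{1m}(ml^-)-\tfrac{d_2}{k}u'_{2m}(ml^+)\bigr)$ at $x=ml\in S_1$ and the analogous expression at $x=ml+l_2\in S_2$; since $V$ may be chosen so that the trace $v_{1m}(ml^-)$ is arbitrary, each parenthesis must vanish, which is exactly the flux condition in~\eqref{defDA}. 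Thus $U\in\mathcal{D}(A)$ and $(I+A)U=F$, so $\mathrm{Range}(I+A)=X$ and $A$ is maximal monotone.

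Finally, a symmetric maximal monotone operator on a Hilbert space is self-adjoint and nonnegative; by the spectral theorem (or Pazy's theorem) $-A$ therefore generates an analytic semigroup of contractions on $X$. The main subtlety throughout is bookkeeping at the interfaces: the role of the weight $1/k$ in~\eqref{inner product on X} is precisely to turn the discontinuous-flux interface conditions of~\eqref{m-pb} into the natural boundary conditions of a self-adjoint operator, and verifying this consistently at interfaces in $S_1$ and $S_2$ is the only nonroutine point of the argument.
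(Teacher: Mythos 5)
Your proof is correct and follows essentially the paper's strategy for symmetry, monotonicity, and surjectivity of $I+A$: integration by parts patch-by-patch using the weighted inner product~\eqref{inner product on X} and the identity $\sigma d_1=d_2/k$ to cancel the interface terms, then Lax--Milgram plus interior elliptic regularity and recovery of the natural flux condition from the weak formulation. (The paper verifies $\mathcal{R}(\lambda I_X+A)=X$ for every $\lambda\ge0$ rather than just $\lambda=1$; that is not needed for maximal monotonicity, but the fact that $0\in\rho(A)$ is used later in the proof of Theorem~\ref{tp-wellposedness} for the fractional-power argument.)

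Where you genuinely diverge from the paper is the analytic-semigroup step. You invoke the classical facts that a symmetric maximal monotone operator on a Hilbert space is self-adjoint, and that a nonnegative self-adjoint operator has $-A$ generating an analytic contraction semigroup via the spectral theorem. The paper instead argues concretely: it establishes the strict coercivity $\langle AU,U\rangle_X\ge\beta\|U\|_{\mathcal{H}^1_0}^2$ via Poincar\'e, deduces that the numerical range of $-A$ lies in $(-\infty,-\delta]$, and then cites \cite[Theorems~1.3.9 and 2.5.2]{P1983} to obtain the sectorial resolvent estimate. Both routes are sound; yours is shorter and needs only nonnegativity (no Poincar\'e constant), while the paper's route makes the sector estimate explicit and simultaneously records $0\in\rho(A)$, which it reuses. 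The structural point you correctly isolate --- that the weight $1/k$ together with $\sigma=d_2/(kd_1)$ converts the discontinuous-flux interface conditions into the natural boundary conditions of a self-adjoint form --- is precisely the insight underlying the paper's computation as well.
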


\begin{proof}
For any $U=(u_{2(-n)},\ldots, u_{1n})^T\in\mathcal{D}(A)$, by using~\eqref{sigma} and the interface and boundary conditions given in the definition of $\mathcal{D}(A)$, we have
\begin{equation}
\label{A is monotone}
\begin{aligned}
\langle A U,U\rangle _{X}=&\sum_{j\in J_1} \int_{I_{1j}} (-d_1 u''_{1j}) u_{1j}+\frac{1}{k}\sum_{j\in J_2} \int_{I_{2j}} (-d_2 u''_{2j})  u_{2j}\\
= & \sum_{j\in J_1} \int_{I_{1j}} d_1 u'_{1j} u'_{1j}+\frac{1}{k}\sum_{j\in J_2} \int_{I_{2j}} d_2u'_{2j} u'_{2j}\ge\beta\|U\|_{\mathcal{H}^1_0}^2\ge0, 
\end{aligned}
\end{equation}
where $\beta>0$ is a positive constant independent of $U\in\mathcal{D}(A)\subset\mathcal{H}^1_0$, given by Poincar\'e's inequality. Therefore, $A$ is monotone. The symmetry of $A$ is also obvious from a similar calculation.
    
Next, we shall prove that,  for every $\lambda\ge0$, the range $\mathcal{R}(\lambda I_X+A)$ of $\lambda I_X+A$ is equal to $X$ ($I_X$ denotes the identity operator in $X$), that is, for any $F\in X$, there exists $U\in\mathcal{D}(A)$ such that $\lambda U+AU=F$ (such a $U$ is then unique from~\eqref{A is monotone}). For any $F=(f_{2(-n)},\ldots,f_{1n})^T\in X$, we consider the following boundary value problem:
\begin{equation}
\label{resolvent pb-1}
\begin{aligned} 
\begin{cases}
-d_2u''_{2(-n)} +\lambda u_{2(-n)}=  f_{2(-n)},~~ &\hbox{in }(-nl,-nl+l_2), \\
-d_1u''_{1m}+\lambda u_{1m} =  f_{1m},~~ &\hbox{in }(ml-l_1,ml),\ m=0,\pm 1,...,\pm (n-1),\\ 
-d_2u''_{2m}+\lambda u_{2m}=  f_{2m},~~ &\hbox{in }(ml,ml+l_2),\ m=0,\pm 1,...,\pm (n-1),\\
-d_1u''_{1n}+\lambda u_{1n}=  f_{1n},~~ &\hbox{in }(nl-l_1,nl),
\end{cases}
\end{aligned}
\end{equation}
with  interface conditions 
\begin{equation}
\label{resolvent pb-2}\left\{
\begin{aligned}
u_{1m}( x)&=u_{2m}(x),\qquad\ ~(u_{1m})'(x^-)=\sigma(u_{2m})'(x^+),\\
& \qquad\qquad\qquad\qquad\qquad\qquad x=ml,\ m=0,\pm1,...,\pm(n-1),\\
u_{2m}(x)&=u_{1(m+1)}(x),~\ \sigma (u_{2m})'(x^-)=  (u_{1(m+1)})'(x^+),\\
& \qquad\qquad\qquad\qquad\qquad\qquad x=ml+l_2,\ m=0,\pm1,...,\pm(n-1),-n,
\end{aligned}\right.
\end{equation}
and boundary conditions
\begin{equation}
\label{resolvent pb-3}
u_{2(-n)}(-nl) =  u_{1n}(nl)=0.
\end{equation}
Problem \eqref{resolvent pb-1}--\eqref{resolvent pb-3} is first converted into a weak problem, which consists in finding~$U\in\mathcal{H}^1_0$ such that 
\begin{equation}
\label{a(U,V)}
a(U,V)=\langle F, V\rangle_X\ \hbox{ for all }V\in\mathcal{H}^1_0,
\end{equation}
where the bilinear form $a$ is defined in $\mathcal{H}^1_0\times\mathcal{H}^1_0$ by
\begin{equation*}
a(U,V)=\sum_{j\in J_1} \int_{I_{1j}}\big(d_1u'_{1j}v'_{1j}+\lambda u_{1j}v_{1j}\big)+\frac{1}{k}\sum_{j\in J_2} \int_{I_{2j}}\big(d_2u'_{2j}v'_{2j}+\lambda u_{2j}v_{2j}\big).
\end{equation*}
It is clear that the bilinear form defined in $\mathcal{H}^1_0\times \mathcal{H}^1_0$ is continuous and coercive (from Poincar\'e's inequality again). Then, by the Lax-Milgram theorem, problem \eqref{a(U,V)} admits a unique solution~$U\in\mathcal{H}^1_0$, and we have
\begin{equation} \label{estimate-1}
\|U\|_{\mathcal{H}^1_0}\le C \Vert  F\Vert _X, 
\end{equation}
for some constant $C>0$ only depending on $d_{1,2}$, $k$, $n$ and $l_{1,2}$. Furthermore, owing to the definition of~$a$, the solution $U$ belongs to~$\mathcal{D}(A)$ and satisfies~\eqref{resolvent pb-1}--\eqref{resolvent pb-3}. By rewriting the equations as $u''_{ij} = (\lambda u_{ij}- f_{ij})/d_i$ for $j\in J_i$ and $i\in\{1,2\}$, and taking  $L^2$-norms on both sides, we get $\Vert u''_{ij}\Vert _{L^2(I_{ij})}\le(1/d_i)\times(\lambda\Vert  u_{ij} \Vert _{L^2(I_{ij})}+\Vert f_{ij}\Vert _{L^2(I_{ij})})$. By~\eqref{estimate-1}, we finally obtain the overall~$H^{2}$ estimate $\Vert   U\Vert _{\mathcal{H}^2}\le C' \Vert F\Vert _X$ for some constant $C'>0$ only depending on $d_{1,2}$, $k$, $n$, $l_{1,2}$ and $\lambda$. We then conclude that~$\mathcal{R}(\lambda I_X+A)=X$, that $\lambda I_X+A$ is invertible from $\mathcal{D}(A)$ onto~$X$ and that $(\lambda I_X+A)^{-1}$ is bounded from $X$ onto $\mathcal{D}(A)$. In particular, $\mathcal{R}(I_X+A)=X$ and the operator $A$ is maximal monotone. It is then densely defined and closed, and $\mathcal{D}(A)$ is also a Banach space if endowed with the graph norm~$\|U\|_{\mathcal{D}(A)}$ of $A$.

Lastly, let us show that $-A$ is the infinitesimal generator of an analytic semigroup on $X$. First of all, since $A$ is monotone, one has $\lambda\Vert u\Vert_X\le \Vert (\lambda I_X+A) u\Vert_X$
for every $\lambda\ge0$ and $u\in\mathcal{D}(A)$, hence
\begin{equation*}
\Vert(\lambda I_X+A)^{-1}\Vert_{\mathcal{L}(X)}\le \frac{1}{\lambda}~~\text{for every}~\lambda>0.
\end{equation*}
Therefore, the Hille-Yosida theorem implies that the operator $-A$ is the infinitesimal generator of a contraction semigroup on $X$. On the other hand, by viewing $A$ in the complexified Hilbert spaces associated with $X$ and $\mathcal{H}^2$, one sees from~\eqref{A is monotone} and the symmetry of $A$ that the numerical range ${\mathcal{S}(-A)}$ of $-A$ is included in $\R$ and more precisely in an interval $(-\infty,-\delta]$, for some $\delta>0$. Fix any $\theta\in(0,\pi/2)$ and denote
\begin{equation*}
\Sigma_\theta:=\{\lambda\in\mathbb{C}^*: |\arg\lambda|<\pi-\theta\}.
\end{equation*}
Then there is $C_\theta>0$ such that $\text{dist}(\lambda,\overline{\mathcal{S}(-A)})\ge C_\theta |\lambda|$ for all $\lambda\in \Sigma_\theta$, where $\text{dist}(\lambda,\overline{\mathcal{S}(-A)})$ represents the distance in $\mathbb{C}$ between $\lambda$ and $\overline{\mathcal{S}(-A)}$. We observe that $\Sigma_\theta\cap\rho(-A)\neq\emptyset$, since any $\lambda>0$ is in the resolvent set $\rho(-A)$ of $-A$ from the above analysis. Therefore, \cite[Theorem~1.3.9]{P1983} then states that $\Sigma_\theta\subset\rho(-A)$ and 
$$\Vert (\lambda I_X+A)^{-1}\Vert_{\mathcal{L}(X)} \le \frac{1}{C_\theta|\lambda|}~~\text{for all}~\lambda\in \Sigma_\theta.$$
Since $0$ belongs to $\rho(-A)$ as well, we conclude by \cite[Theorem~2.5.2]{P1983} that $-A$ is  the infinitesimal generator of an analytic semigroup on $X$. The proof of Lemma~\ref{lemma-A maximal monotone} is thereby complete.
\end{proof}

With Lemma~\ref{lemma-A maximal monotone} in hand, we are now ready to prove Theorem~\ref{tp-wellposedness} on the well-posedness of the Cauchy problem~\eqref{tp-1}--\eqref{tp-4}.

\noindent\begin{proof}[Proof of Theorem~$\ref{tp-wellposedness}$] The proof is divided into two main steps. The first one assumes an additional hypothesis on the functions $f_i$ in~\eqref{2.5}, and the second one deals with the general case of~$f_i$ satisfying~\eqref{2.5}.
\vskip 0.3cm
\noindent{\it Step 1: in addition to~\eqref{2.5}, assume that $f_1$ and $f_2$ are globally Lipschitz continuous from~$\R$ to~$\R$}. The function $f$ given in~\eqref{defAf} is then Lipschitz continuous from $X$ to $X$. Therefore, it follows from Lemma~\ref{lemma-A maximal monotone} and~\cite[Theorem~2.5.1]{Zheng2004} that, for each $U_0\in X$, problem~\eqref{abstract-nonlinear} has a unique global mild solution $U\in C([0,+\infty),X)$, satisfying
\begin{equation}
\label{7.1.30}
U(t)=e^{-tA}U_0+\int_0^t e^{-(t-s)A}f(U(s))~\mathrm{d}s
\end{equation}
for all $t\ge0$. Note that the function $t\mapsto f(U(t))$ belongs to $C([0,+\infty),X)$ as well. By~\cite[Lemma~7.1.1]{L1996}, the integral on the right-hand side of~\eqref{7.1.30} belongs to $C^{0,\gamma}_{loc}([0,+\infty),X)$ for any $\gamma\in(0,1)$. Since~$t\mapsto e^{-tA}U_0$ is of class $C^\infty((0,+\infty),\mathcal{D}(A))$ by \cite[Theorem 2.3.2]{Zheng2004}, we see that $U\in C^{0,\gamma}_{loc}((0,+\infty),X)$ for any $\gamma\in(0,1)$ and that the function $t\mapsto f(U(t))$ belongs to $C^{0,\gamma}_{loc}((0,+\infty),X)$ too for any $\gamma\in(0,1)$. It then follows from~\cite[Theorem~4.3.1]{L1996} that~$U\in C^{0,\gamma}_{loc}((0,+\infty),\mathcal{D}(A))\cap C^{1,\gamma}_{loc}((0,+\infty),X)$ for any~$\gamma\in(0,1)$. As a consequence, $U$ is a classical solution of~\eqref{abstract-nonlinear}, with equalities in $X$.

Furthermore, by Lemma \ref{lemma-A maximal monotone} and the fact that $0\in\rho(A)$, we can define fractional powers~$A^\beta$ of~$A$. For $0 <\beta\le1$, $A^\beta$ is a closed operator whose domain $\mathcal{D}(A^\beta)$ is dense in $X$ and $\mathcal{D}(A)\hookrightarrow\mathcal{D}(A^\beta)\hookrightarrow X$ continuously. Endowed with the graph norm $\|U\|_{\mathcal{D}(A^\beta)}$ of $A^\beta$, $\mathcal{D}(A^\beta)$ is a Banach space. Since $A$ is sectorial and $\inf\{\Re(\lambda):\lambda\not\in\rho(A)\}>0$, it follows that $A^\beta$ is invertible with bounded inverse $(A^\beta)^{-1}\in\mathcal{L}(X)$ and that the norm $\Vert U\Vert_{\mathcal{D}(A^\beta)}$ is equivalent to $\Vert A^\beta U\Vert_X$ in~$\mathcal{D}(A^\beta)$. From~\cite[Lemma 37.8]{SY2002}, one has, for each $1/4<\beta\le 1$ and $\delta\in(0,\min(2\beta-1/2,1))$, a continuous embedding~$\mathcal{D}(A^\beta)\hookrightarrow C^{0,\delta}([-nl,nl])$.\footnote{With a slight abuse of notation, the embedding $\mathcal{D}(A^\beta)\hookrightarrow C^{0,\delta}([-nl,nl])$ means that the elements $U=(u_{2(-n)},\ldots,u_{1n})^T$ of $\mathcal{D}(A^\beta)$ have continuous components $u_{ij}$ in each corresponding closed patch $\overline{I_{ij}}$, and that the function equal to each $u_{ij}$ on each closed patch $\overline{I_{ij}}$ is well defined, continuous in $[-nl,nl]$, vanishes at~$\pm nl$ and is H\"older continuous of exponent $\delta$ in $[-nl,nl]$, with a sup norm and a H\"older norm controlled by $\|U\|_{\mathcal{D}(A^\beta)}$.} From now on, we fix $\beta\in(1/4,1)$. We also observe that $f:\mathcal{D}(A^\beta)\to X$ is globally Lipschitz continuous: indeed, for any $U,V\in\mathcal{D}(A^\beta)$, there holds
$$\baa{rcl}
\|f(U)-f(V)\|_X\le L\|U-V\|_X & = & L\Vert (A^\beta)^{-1} A^\beta U-(A^\beta)^{-1} A^\beta V\Vert_{X}\vspace{3pt}\\
& \le & L\|(A^\beta)^{-1}\|_{\mathcal{L}(X)}\Vert A^\beta U- A^\beta V\Vert_X\vspace{3pt}\\
& \le & L\|(A^\beta)^{-1}\|_{\mathcal{L}(X)}\Vert U-V\Vert_{\mathcal{D}(A^\beta)},\eaa$$
for some constant $L\in[0,+\infty)$ independent of $U,V\in\mathcal{D}(A^\beta)$. Now, for any $U_0\in\mathcal{D}(A^\beta)\ (\subset X)$, the unique global solution $U\in C([0,+\infty),X)\cap  C^{0,\gamma}_{loc}((0,+\infty),\mathcal{D}(A))\cap C^{1,\gamma}_{loc}((0,+\infty),X)$ (for any $\gamma\in(0,1)$) of~\eqref{abstract-nonlinear}, given in the previous paragraph, also belongs to $C([0,+\infty),\mathcal{D}(A^\beta))$ and then to $C([0,+\infty), C^{0,\delta}([-nl,nl]))$ for any $\delta\in(0,\min(2\beta-1/2,1))$. Since $U$ satisfies~\eqref{7.1.30} for all $t\ge0$, we then get by~\cite[Theorem~3.5.2]{H81} and~\cite[Lemma 37.8]{SY2002} the existence of some $\eta\in(0,1)$, $\theta\in(1/4,1)$ and $\omega\in(0,\min(2\theta-1/2,1))$ such that $U\in C^{1,\eta}_{loc}((0,+\infty),\mathcal{D}(A^\theta))$ and
$$U\in C^{1,\eta}_{loc}((0,+\infty), C^{0,\omega}([-nl,nl])).$$

Since $\mathcal{D}(A)\hookrightarrow\mathcal{D}(A^\beta)$, it follows from the previous two paragraphs that, for any $U_0\in X$, the solution $U\in C([0,+\infty),X)\cap C((0,+\infty),\mathcal{D}(A))\cap C^1((0,+\infty),X)$ of~\eqref{abstract-nonlinear} belongs to
$$C((0,+\infty),\mathcal{C}_0\cap C^{0,\delta}([-nl,nl]))\cap C^{1,\eta}_{loc}((0,+\infty), C^{0,\omega}([-nl,nl])).$$
Moreover, if $U_0\in\mathcal{D}(A^\beta)$, then $U\in C([0,+\infty), C^{0,\delta}([-nl,nl]))$. One infers that, for any~$U_0\in X$, the function $u$ defined as in~\eqref{tp-4} (with similar definition at $t=0$) is continuous in~$(0,+\infty)\times[-nl,nl]$, vanishes on $(0,+\infty)\times\{\pm nl\}$, is of class~$C^1$ with respect to $t$ in $(0,+\infty)\times[-nl,nl]$, with~$u$ and~$\frac{\partial u}{\partial t}$ H\"older continuous in $[\tau,\tau']\times[-nl,nl]$ for every $0<\tau<\tau'<+\infty$. Therefore, for each patch~$I\subset[-nl,nl]$ of type $i\in\{1,2\}$ and for each $0<\tau<\tau'<+\infty$, the function~$f_i(u)$ is H\"older continuous in $[\tau,\tau']\times\overline{I}$, hence equation~\eqref{abstract-nonlinear} implies that~$u|_{[\tau,\tau']\times\overline{I}}$ is of class~$C^2$ with respect to~$x$ and~$\frac{\partial^2u|_{[\tau,\tau']\times\overline{I}}}{\partial x^2}$ is H\"older continuous in $[\tau,\tau']\times\overline{I}$. In particular, $u$ is a classical solution of~\eqref{tp-1}--\eqref{tp-4} for $t>0$.  Furthermore,  if $U_0\in\mathcal{D}(A^\beta)$, then~$u$ is also continuous in~$[0,+\infty)\times[-nl,nl]$.

It remains to show, still in this step~1, that $u$ is bounded and continuous up to $t=0$ when~$u_0\in\mathcal{C}_0$. To do so, we first prove a comparison principle for the solutions when the initial conditions are in $X$. Take any $V_0,W_0\in X$ such that $v_0\le w_0$ almost everywhere in~$[-nl,nl]$, with obvious notations for $v_0$ and $w_0$. There exist then two sequences $(V_{0j})_{j\in\N}$ and $(W_{0j})_{j\in\N}$ in~$\mathcal{D}(A)\,(\subset\mathcal{D}(A^\beta)\subset\mathcal{C}_0)$ such that $v_{0j}\le w_{0j}$ everywhere in $[-nl,nl]$ (with obvious notations) for all~$j\in\N$, and~$V_{0j}\to V_0$, $W_{0j}\to W_0$ in $X$ as $j\to+\infty$. For each $j\in\N$, with obvious notations, let~$v_j$ and~$w_j$ be the classical solutions of~\eqref{tp-1}--\eqref{tp-4} with initial conditions $v_{0j}$ and~$w_{0j}$. The functions~$v_j$ and~$w_j$ are continuous in $[0,+\infty)\times[-nl,nl]$, from the previous paragraph. Therefore, the maximum principle of Proposition~\ref{bdd-cp} implies that
$$v_j\le w_j\hbox{ in $[0,+\infty)\times[-nl,nl]$},$$
for all $j\in\N$. Since, for each $t\ge0$, the map $U_0\mapsto U(t)$ given by~\eqref{7.1.30} is continuous (and even Lipschitz continuous) from~$X$ to~$X$ by~\cite[Theorem~2.5.1]{Zheng2004}, one infers that, for each $t>0$, $v(t,\cdot)\le w(t,\cdot)$ almost everywhere in~$(-nl,nl)$ and then everywhere in $[-nl,nl]$ by continuity. To sum up,
\be\label{inequvn}
v\le w\hbox{ in }(0,+\infty)\times[-nl,nl].
\ee

If $u_0\in\mathcal{C}_0$ with $u_0\ge0$ in $[-nl,nl]$, 	without loss of generality, one can choose a sequence $(u_{0k})_{k\in\mathbb{N}}$ in  $\mathcal{D}(A)$ such that $u_{0k}\to u_0$ as $k\to +\infty$  and  $0\le u_{0k}\le\|u_0\|_{L^\infty(-nl,nl)}$ in $[-nl,nl]$ for all $k\in\N$.
  Remembering~\eqref{2.5}, the constant functions $0$ and~$\max\big(K_1,K_2,\|u_0\|_{L^\infty(-nl,nl)}\big)$ are, respectively, a subsolution and a supersolution, in the sense of Definition~\ref{defsubsuper}, of the problem~\eqref{tp-1}--\eqref{tp-4} satisfied by the continuous and classical solution~$u_k$ in~$[0,+\infty)\times[-nl,nl]$. The maximum principle of Proposition~\ref{bdd-cp} then yields $0\le u_k\le\max\big(K_1,K_2,\|u_0\|_{L^\infty(-nl,nl)}\big)$ in~$[0,+\infty)\times[-nl,nl]$ for all $k\in\N$, hence
\be\label{inequ2}
0\le u\le\max\big(K_1,K_2,\|u_0\|_{L^\infty(-nl,nl)}\big)\ \hbox{ in }(0,+\infty)\times[-nl,nl],
\ee
by passing to the limit as $k\to+\infty$ for each $t>0$, as in the previous paragraph. Notice that~\eqref{inequ2} holds as well on $\{0\}\times[-nl,nl]$ by assumption on $u_0$.

Lastly, consider again any $u_0\in\mathcal{C}_0\,(\subset X)$ in $[-nl,nl]$ and let us show that $u$ is continuous up to time $t=0$. Let $\epsilon>0$ be arbitrary. Let $\underline{U}_0$ and $\overline{U}_0$ be two functions in $\mathcal{D}(A)\,(\subset\mathcal{D}(A^\beta)\subset X)$ such that
$$u_0-\epsilon\le\underline{u}_0\le u_0\le\overline{u}_0\le u_0+\epsilon\ \hbox{ in }[-nl,nl]$$
(with obvious notations for the functions $\underline{u}_0$ and $\overline{u}_0$, which can be chosen in $C^2([-nl,nl])\cap\mathcal{C}_0$ with zero derivatives at the interior interfaces) and let $\underline{u}$ and $\overline{u}$ be the two classical solutions of~\eqref{tp-1}--\eqref{tp-4} with initial conditions $\underline{u}_0$ and $\overline{u}_0$. From the above arguments, the functions~$\underline{u}$ and~$\overline{u}$ are continuous in $[0,+\infty)\times[-nl,nl]$, and $\underline{u}\le u\le\overline{u}$ in $[0,+\infty)\times[-nl,nl]$ from~\eqref{inequvn} and the choice of the initial conditions. Finally, there is $t_0>0$ such that
$$u_0-2\epsilon\le\underline{u}_0-\epsilon\le\underline{u}\le u\le\overline{u}\le\overline{u}_0+\epsilon\le u_0+2\epsilon\ \hbox{ in }[0,t_0]\times[-nl,nl],$$
from which it follows that the $C((0,+\infty)\times[-nl,nl])$ function $u$ is also continuous up to time $t=0$. It is therefore a bounded classical solution of~\eqref{tp-1}--\eqref{tp-4} in $[0,+\infty)\times[-nl,nl]$ with initial condition $u_0$, in the sense of Definition~\ref{def2}.
\vskip 0.3cm
\noindent{\it Step 2: the general case of assumption~\eqref{2.5}.} Consider a nonnegative initial condition~$u_0$ in~$\mathcal{C}_0$. Denote $K=\max\big(K_1,K_2,\|u_0\|_{L^\infty(-nl,nl)}\big)$ and, for $i=1,2$, let $\tilde{f}_i:\R\to\R$ be a globally Lipschitz continuous function of class $C^1(\R)$ such that $\tilde{f}_i|_{{\blue [0,K]}}=f_i|_{{\blue[0,K]}}$ and $\tilde{f}_i\le0$ in $[K_i,+\infty)$. From Step~1, there is a unique bounded classical solution $u$ of~\eqref{tp-1}--\eqref{tp-4} in $[0,+\infty)\times[-nl,nl]$ with initial condition $u_0$, but with the nonlinearities $\tilde{f}_i$ instead of $f_i$, and $u$ satisfies~\eqref{inequ2} in~$[0,+\infty)\times[-nl,nl]$. From~\eqref{inequ2} and the choice of $\tilde{f}_i$, the function $u$ is then a bounded classical solution of the problem~\eqref{tp-1}--\eqref{tp-4} in $[0,+\infty)\times[-nl,nl]$ with initial condition $u_0$ and with the original nonlinearities~$f_i$.

Since the uniqueness and comparison properties in Theorem~\ref{tp-wellposedness} directly follow from Proposition~\ref{bdd-cp}, the proof of Theorem~\ref{tp-wellposedness} is thereby complete.
\end{proof}


\subsection{Proof of Theorem \ref{thm-m-wellposedness}}

We first prove the existence of a solution to the problem \eqref{m-pb}--\eqref{d-f} through a truncation and  approximation argument. Set
$$\varep:=\min\Big(\frac{l_1}{4},\frac{l_2}{4}\Big)>0.$$
We first fix a sequence of cut-off functions $(\delta^n)_{n\in\N}$ in $C(\mathbb{R})$ such that, for each $n\in\N$,
\begin{equation}
\label{cut-off func}
0\le\delta^n\le1\hbox{ in }\R,\ \hbox{ and }\ \delta^n(x):=\begin{cases}
1 & \text{if}~x\in[-nl-\varep+l_2,nl-l_1+\varep],\\
0 & \text{if}~x\notin (-nl,nl).
\end{cases}
\end{equation}
We can for instance define uniquely $\delta^n$ by also assuming that $\delta^n$ is affine in $[-nl,-nl-\epsilon+l_2]$ and in $[nl-l_1+\epsilon,nl]$.

Let us now take any nonnegative bounded continuous function $u_0:\R\to\R$. For each $n\in\N$, we consider the truncated problem \eqref{tp-1}--\eqref{tp-4} on $[-nl,nl]$, with initial condition $\delta^nu_0|_{[-nl,nl]}$. This problem involves $4n$ patches still identified  by $I_{2(-n)},I_{1(-n+1)},\ldots,I_{2(n-1)},I_{1n}$ as before, and the nonnegative initial condition $\delta^nu_0|_{[-nl,nl]}$ belongs to the space $\mathcal{C}_0$ defined in~\eqref{defC0}. Therefore, by Theorem~\ref{tp-wellposedness}, there is a unique bounded classical solution $u^n$ of~\eqref{tp-1}--\eqref{tp-4} in $[0,+\infty)\times[-nl,nl]$ with initial condition $\delta^nu_0|_{[-nl,nl]}$, and $u^n$ also satisfies~\eqref{umax} in $[0,+\infty)\times[-nl,nl]$. Moreover, for every $m<n\in\N$, one has
$$0\le u^m(0,\cdot)=\delta^mu_0|_{[-ml,ml]}\le u_0|_{[-ml,ml]}=\delta^nu_0|_{[-ml,ml]}=u^n(0,\cdot)\ \hbox{ in } [-ml,ml]$$
and $u^n(t,\pm ml)\ge0$ for all $t\ge0$ by~\eqref{umax}, hence
$$u^m\le u^n|_{[0,+\infty)\times[-ml,ml]}\ \hbox{ in $[0,+\infty)\times[-ml,ml]$}$$
by Proposition~\ref{bdd-cp}. As a consequence, for each $(t,x)\in[0,+\infty)\times\R$, the sequence $(u^n(t,x))_{n\ge|x|/l}$ is non-decreasing and ranges in $[0,K]$ with
\be\label{defK2}
K:=\max\big(K_1,K_2,\|u_0\|_{L^\infty(\R)}\big),
\ee
hence the sequence $(u^n(t,x))_{n\ge|x|/l}$ converges to a quantity $u(t,x)\in[0,K]$, that is,
\be\label{defu2}
u^n(t,x)\to u(t,x)\in[0,K]\ \hbox{ as }n\to+\infty.
\ee

Notice also that, if $u_0\not\equiv 0$, then $u^n(0,\cdot)\ge\not\equiv0$ in $[-nl,nl]$ for all $n$ large enough, hence $u^n>0$ in $(0,+\infty)\times(-nl,nl)$ for all $n$ large enough by Proposition~\ref{bdd-cp}, and finally $u(t,x)>0$ for all $(t,x)\in(0,+\infty)\times\R$ because the sequence $(u^n(t,x))_{n\ge|x|/l}$ is non-decreasing for each $(t,x)\in[0,+\infty)\times\R$.

In order to show that $u$ is a classical solution of~\eqref{m-pb}--\eqref{d-f}, we need further differential estimates on the sequence $(u^n)_{n\in\N}$. Consider any $0<\tau\le\tau'<+\infty$ and any patch $I\subset\R$. Let us assume that $I$ is of type~1, that is, $I=I_{1m}=(ml-l_1,ml)$ for some $m\in\Z$ (the case of a patch~$I$ of type~2 can be dealt with similarly). Let us fix an arbitrary $\nu\in(0,1/2)$, say for instance $\nu=1/4$. Since the solutions $u^n$ (for $n\ge|m|+1$) of~\eqref{tp-1}--\eqref{tp-4} are uniformly bounded in $[0,+\infty)\times[-(|m|+1)l,(|m|+1)l]$, it follows from standard interior parabolic estimates that
$$\sup_{n\ge|m|+1}\|u^n(\cdot,ml-l_1-\epsilon)\|_{C^{1,\nu}([\tau/2,+\infty))}+\sup_{n\ge|m|+1}\|u^n(\cdot,ml+\epsilon)\|_{C^{1,\nu}([\tau/2,+\infty))}\le C_0,$$
for some positive constant $C_0$ only depending on $\tau$, $l_{1,2}$, $d_2$, $f_2$ and $K$ given in~\eqref{defK2}, hence on $\tau$, $l_{1,2}$, $d_2$, $f_{1,2}$ and~$\|u_0\|_{L^\infty(\R)}$. Consider then two $C^3([ml-l_1-\epsilon,ml+\epsilon])$ functions $g:[ml-l_1-\epsilon,ml+\epsilon]\to[0,1]$ and $h:[ml-l_1-\epsilon,ml+\epsilon]\to[0,1]$ such that
$$\left\{\baa{l}
g(ml-l_1-\epsilon)=h(ml+\epsilon)=0,\vspace{3pt}\\
g(ml+\epsilon)=h(ml-l_1-\epsilon)=1,\vspace{3pt}\\
g'(ml-l_1)=h'(ml-l_1)=g'(ml)=h'(ml)=0.\eaa\right.$$
They can be chosen so that their $C^3([ml-l_1-\epsilon,ml+\epsilon])$ norms only depend on $l_{1,2}$. Consider now, for each $n\ge|m|+1$, the function $\tilde{u}^n$ defined in $[\tau/2,+\infty)\times[ml-l_1-\epsilon,ml+\epsilon]$ by
\be\label{deftildeun}
\tilde{u}^n(t,x)=u^n(t,x)-h(x)u^n(t,ml-l_1-\epsilon)-g(x)u^n(t,ml+\epsilon).
\ee
Each such function $\tilde{u}^n$ is continuous in $[\tau/2,+\infty)\times[ml-l_1-\epsilon,ml+\epsilon]$ and has restrictions of class $C^{1;2}_{t;x}$ in $[\tau/2,+\infty)\times[ml-l_1-\epsilon,ml-l_1]$, in $[\tau/2,+\infty)\times[ml-l_1,ml]=[\tau/2,+\infty)\times\bar I$ and in $[\tau/2,+\infty)\times[ml,ml+\epsilon]$. Furthermore, from~\eqref{tp-1}--\eqref{tp-4} and~\eqref{deftildeun}, one has
$$\left\{\baa{ll}
\displaystyle\frac{\partial\tilde{u}^n}{\partial t}=d_2\frac{\partial^2\tilde{u}^n}{\partial x^2}+\tilde{f}_2^n(t,x,\tilde{u}^n(t,x)), & t\ge\tau/2,\ x\in(ml-l_1-\epsilon,ml-l_1),\vspace{3pt}\\
\displaystyle\frac{\partial\tilde{u}^n}{\partial t}=d_1\frac{\partial^2\tilde{u}^n}{\partial x^2}+\tilde{f}_1^n(t,x,\tilde{u}^n(t,x)), & t\ge\tau/2,\ x\in(ml-l_1,ml),\vspace{3pt}\\
\displaystyle\frac{\partial\tilde{u}^n}{\partial t}=d_2\frac{\partial^2\tilde{u}^n}{\partial x^2}+\tilde{f}_2^n(t,x,\tilde{u}^n(t,x)), & t\ge\tau/2,\ x\in(ml,ml+\epsilon),\vspace{3pt}\\
\tilde{u}^n(t,ml-l_1-\epsilon)=\tilde{u}^n(t,ml+\epsilon)=0, & t\ge\tau/2,\vspace{3pt}\\
\tilde{u}^n(t,(ml-l_1)^-)=\tilde{u}^n(t,(ml-l_1)^+),  & t\ge\tau/2,\vspace{3pt}\\
\sigma\tilde{u}^n_x(t,(ml-l_1)^-)=\tilde{u}^n_x(t,(ml-l_1)^+),  & t\ge\tau/2,\vspace{3pt}\\
\tilde{u}^n(t,(ml)^-)=\tilde{u}^n(t,(ml)^+),  & t\ge\tau/2,\vspace{3pt}\\
\tilde{u}^n_x(t,(ml)^-)=\sigma\tilde{u}^n_x(t,(ml)^+),  & t\ge\tau/2,\eaa\right.$$
with
$$\baa{rcl}
\tilde{f}_i^n(t,x,s) & = & f_i\big(s+h(x)u^n(t,ml-l_1-\epsilon)+g(x)u^n(t,ml+\epsilon)\big)\vspace{3pt}\\
& & -\,h(x)u^n_t(t,ml-l_1-\epsilon)-g(x)u^n_t(t,ml+\epsilon)\vspace{3pt}\\
& & +\,d_ih''(x)u^n(t,ml-l_1-\epsilon)+d_ig''(x)u^n(t,ml+\epsilon).\eaa$$
In other words, each function $\tilde{u}^n$ solves a truncated problem similar to~\eqref{tp-1}--\eqref{tp-4}, but this time on the interval $[ml-l_1-\epsilon,ml+\epsilon]$ (with only three patches) and with nonlinearities $\tilde{f}_i^n(t,x,s)$ which are still of class $C^1$ with respect to $s$, with partial derivatives equal to $f'_i\big(s+h(x)u^n(t,ml-l_1-\epsilon)+g(x)u^n(t,ml+\epsilon)\big)$, and are now H\"older continuous of any exponent $\nu$ with respect to~$(t,x)\in[\tau/2,+\infty)\times[ml-l_1-\epsilon,ml+\epsilon]$ uniformly with respect to $s$ and $n$. Remember that $\tau'\ge\tau$, hence $\tau'-\tau/2\ge\tau/2$. Since the sequence $(\tilde{u}^n(\tau'-\tau/2,\cdot))_{n\ge|m|+1}$ is bounded in particular in $L^2(ml-l_1-\epsilon,ml+\epsilon)$, it then follows with similar notations and arguments as in the proof of Theorem~\ref{tp-wellposedness} that there is a universal constant $\gamma\in(0,1)$ such that the sequence
$$\Big(\tilde{u}^n(\tau'-\tau/4,\cdot)|_{(ml-l_1-\epsilon,ml-l_1)},\tilde{u}^n(\tau'-\tau/4,\cdot)|_{(ml-l_1,ml)},\tilde{u}^n(\tau'-\tau/4,\cdot)|_{(ml,ml+\epsilon)}\Big)_{n\ge|m|+1}$$
is bounded in the set $\mathcal{D}(A)$ (defined as in~\eqref{defDA}, but with now only three patches) and the sequences $(\tilde{u}^n)_{n\ge|m|+1}$ and $(\tilde{u}^n_t)_{n\ge|m|+1}$ are bounded in $C^\gamma([\tau',\tau'+1]\times[ml-l_1-\epsilon,ml+\epsilon])$, with bounds depending only on $\sup_{n\ge|m|+1}\|\tilde{u}^n(\tau'-\tau/2,\cdot)\|_{L^2(ml-l_1-\epsilon,ml+\epsilon)}$, $\tau$, $l_{1,2}$, $d_{1,2}$, $f_{1,2}$ and $\sigma$, hence only on $\tau$, $l_{1,2}$, $d_{1,2}$, $f_{1,2}$, $\sigma$ and $\|u_0\|_{L^\infty(\R)}$ (notice that these bounds are independent of $\tau'\in[\tau,+\infty)$). Owing to the definitions of $\tilde{f}_i^n$ and $\tilde{u}^n$, one infers that the sequence $(\tilde{f}_i^n(\cdot,\cdot,\tilde{u}^n(\cdot,\cdot)))_{n\ge|m|+1}$ is bounded in $C^\gamma([\tau',\tau'+1]\times[ml-l_1-\epsilon,ml+\epsilon])$, hence so is the sequence
$$\Big(\frac{\partial^2\tilde{u}^n|_{[\tau',\tau'+1]\times[ml-l_1-\epsilon,ml-l_1]}}{\partial x^2},\frac{\partial^2\tilde{u}^n|_{[\tau',\tau'+1]\times[ml-l_1,ml]}}{\partial x^2},\frac{\partial^2\tilde{u}^n|_{[\tau',\tau'+1]\times[ml,ml+\epsilon]}}{\partial x^2}\Big)_{n\ge|m|+1}$$
in $C^\gamma([\tau',\tau'+1]\times[ml-l_1-\epsilon,ml-l_1])\times C^\gamma([\tau',\tau'+1]\times[ml-l_1,ml])\times C^\gamma([\tau',\tau'+1]\times[ml,ml+\epsilon])$, with bounds depending only on $\tau$, $l_{1,2}$, $d_{1,2}$, $f_{1,2}$, $\sigma$ and $\|u_0\|_{L^\infty(\R)}$. Finally, using~\eqref{deftildeun} again, the sequence $(u^n|_{[\tau',\tau'+1]\times\bar I})_{n\ge m+1}$ is bounded in $C^{1,\gamma;2,\gamma}_{t;x}([\tau',\tau'+1]\times\bar I)$, and, since the bound does not depend on $\tau'\in[\tau,+\infty)$, the sequence $(u^n|_{[\tau,+\infty)\times\bar I})_{n\ge m+1}$ is bounded in $C^{1,\gamma;2,\gamma}_{t;x}([\tau,+\infty)\times\bar I)$ by a constant depending only on $\tau$, $l_{1,2}$, $d_{1,2}$, $f_{1,2}$, $\sigma$ and $\|u_0\|_{L^\infty(\R)}$.

From the Arzel\`a-Ascoli theorem and the uniqueness of the limit $u$ in~\eqref{defu2}, it follows that $u^n\to u$ as $n\to+\infty$ in $C^{1;2}_{t;x}([\tau_1,\tau_2]\times\bar I)$ for every $0<\tau_1\le\tau_2$ and every patch $I\subset\R$, hence $u$ is a bounded classical solution of~\eqref{m-pb}--\eqref{d-f} in $(0,+\infty)\times\R$. Furthermore, for every $\tau>0$ and every patch~$I\subset\R$, there holds
$$\Vert u|_{[\tau,+\infty)\times\bar I}\Vert_{C^{1,\gamma;2,\gamma}_{t;x}([\tau,+\infty)\times \bar I)}\le C,$$
for some constant $C$ only depending on $\tau$, $l_{1,2}$, $d_{1,2}$, $f_{1,2}$, $\sigma$ and $\|u_0\|_{L^\infty(\R)}$.

Next, we shall prove the continuity of the function $u$ up to time $t=0$. Fix any $x_0\in\mathbb{R}$, $R>0$, and $\eta>0$. With $K=\max\big(K_1,K_2,\Vert u_0\Vert_{L^\infty(\mathbb{R})}\big)$ as in~\eqref{defK2}, one can choose two nonnegative functions~$\underline u_0$ and~$\overline u_0$ in $C^2(\R)\cap L^\infty(\R)$ such that both $\underline u_0$ and $K-\overline u_0$ are supported in $[x_0-2R,x_0+2R]$, and such that
\begin{align}
\label{initial-1}
0\le\underline u_0(x)\le u_0(x)\le\overline u_0(x)\le K~\text{ for all }x\in[x_0-2R,x_0+2R],
\end{align}
and 
\begin{align}
\label{initial-2}
u_0(x)-\eta\le\underline u_0(x)\le u_0(x)\le\overline u_0(x)\le u_0(x)+\eta\ \text{ for all }x\in[x_0-R,x_0+R].
\end{align}
These functions $\underline u_0$ and $\overline u_0$ can also be chosen so that their derivatives vanish at all interface points in $[x_0-2R,x_0+2R]$. There are then $B>0$ large enough and $t_0>0$ small enough such that $Bt_0\le1$ and the $C^{1;2}_{t;x}([0,+\infty)\times\R)$ functions $(t,x)\mapsto\underline u_0(x)-Bt$ and $(t,x)\mapsto\overline u_0(x)+Bt$ are, respectively, a sub- and a supersolution of truncated problem~\eqref{tp-1}--\eqref{tp-4} for $(t,x)\in[0,t_0]\times[-nl,nl]$ and for any $n\in\N$ large enough so that~$[x_0-2R,x_0+2R]\subset[-nl,nl]$ and $\delta^n  u_0= u_0$ in $[x_0-2R,x_0+2R]$, where $\delta^n$ is the cut-off function defined in~\eqref{cut-off func}. Remembering~\eqref{initial-1}, the inequality~\eqref{umax} satisfied by~$u^n$ and the fact that $\underline{u}_0$ and $K-\overline{u}_0$ are supported in $[x_0-2R,x_0+2R]$ (hence, $\underline{u}_0(\pm nl)-Bt\le0\le K\le\overline{u}_0(\pm nl)+Bt$ for all $t\ge0$), it follows from Proposition~\ref{bdd-cp} that
\begin{align*}
\underline u_0(x)-Bt\le u^n(t,x)\le\overline u_0(x)+Bt\ \hbox{ for all $(t,x)\in[0,t_0]\times[-nl,nl]$}
\end{align*}
and for all $n$ large enough. By passing to the limit $n\to+\infty$ for any $(t,x)\in(0,t_0]\times\R$, one gets that
\begin{align*}
\underline u_0(x)-Bt\le u(t,x)\le\overline u_0(x)+Bt\ \hbox{ for all $(t,x)\in(0,t_0]\times\R$}.
\end{align*}
Together with~\eqref{initial-2}, there is then $t_1>0$ such that $|u(t,x)-u_0(x)|\le2\eta$ for all $(t,x)\in(0,t_1]\times[x_0-R,x_0+R]$. Finally, since $\eta>0$ was arbitrary, this shows that $u$ is continuous up to time $t=0$, and that $u(t,\cdot)\to u_0$ locally uniformly as $t\to0^+$. To sum up, $u$ is a nonnegative bounded classical solution of~\eqref{m-pb}--\eqref{d-f} in $[0,+\infty)\times\R$ with initial condition $u_0$, in the sense of Definition~\ref{def1}.

It now immediately follows from Proposition~\ref{cp} that, if $u$ and $v$ are bounded classical solutions of~\eqref{m-pb}--\eqref{d-f} in $[0,+\infty)\times\R$ with respective initial conditions $u_0$ and $v_0$ such that $0\le u_0(x)\le v_0(x)$ for all $x\in\R$, then $0\le u(t,x)\le v(t,x)$ for all $(t,x)\in[0,+\infty)\times\R$. As a consequence, the nonnegative bounded classical solution $u$ of~\eqref{m-pb}--\eqref{d-f} in $[0,+\infty)\times\R$ with initial condition $u_0$ is necessarily unique.

Let us consider in this paragraph the special case of a periodic initial condition $u_0$, that is, $u_0(x)=u_0(x+l)$ for all $x\in\R$. Since the function $(t,x)\mapsto \tilde{u}(t,x):=u(t,x+l)$ is still a nonnegative bounded classical solution of~\eqref{m-pb}--\eqref{d-f} in $[0,+\infty)\times\R$ (because the coefficients and the set $S$ of~\eqref{m-pb}--\eqref{d-f} are themselves periodic), and since $\tilde{u}(0,\cdot)=u(0,\cdot+l)=u(0,\cdot)$, the uniqueness of $u$ implies that $\tilde{u}\equiv u$ in $[0,+\infty)\times\R$, that is,
$$u(t,x)=u(t,x+l)\ \hbox{ for all $(t,x)\in[0,+\infty)\times\R$}.$$

Finally, let us show the local-in-time continuous dependence of the solutions $u$ with respect to the initial condition. We actually show more, that is, for each $T>0$, the map $u_0\mapsto u$ is Lipschitz continuous from $C^+(\R)\cap L^\infty(\R)$ to $C([0,T]\times\R)\cap L^\infty([0,T]\times\R)$ equipped with the sup norms. Consider two functions $u_0$, $v_0$ in $C^+(\R)\cap L^\infty(\R)$, and denote
$$\underline{u}_0=\max\big(0,u_0-\|u_0-v_0\|_{L^\infty(\R)}\big)\ \hbox{ and }\overline{u}_0=u_0+\|u_0-v_0\|_{L^\infty(\R)}.$$
Then, $\underline{u}_0$ and $\overline{u}_0$ are in $C^+(\R)\cap L^\infty(\R)$, and $0\le\underline{u}_0\le \min(u_0,v_0)\le\max(u_0,v_0)\le\overline{u}_0$ in~$\R$, with $\|\overline{u}_0-\underline{u}_0\|_{L^\infty(\R)}\le 2\|u_0-v_0\|_{L^\infty(\R)}$. Let $u$, $v$, $\underline{u}$ and $\overline{u}$ be the nonnegative bounded classical solutions of~\eqref{m-pb}--\eqref{d-f} in $[0,+\infty)\times\R$, with respective initial conditions $u_0$, $v_0$, $\underline{u}_0$ and $\overline{u}_0$. It follows from the previous monotonicity properties and from~\eqref{defK2}--\eqref{defu2} (applied to $\overline{u}$) that
\be\label{unvn}
0\le\underline{u}\le\min(u,v)\le\max(u,v)\le\overline{u}\le\overline{K}:=\max\big(K_1,K_2,\|\overline{u}_0\|_{L^\infty(\R)}\big)
\ee
in $[0,+\infty)\times\R$. The function $w:=\overline{u}-\underline{u}$ is bounded, continuous and nonnegative in $[0,+\infty)\times\R$ and its restriction to $(0,+\infty)\times\bar I$, for each patch $I$ of type $i\in\{1,2\}$, is of class $C^{1;2}_{t;x}((0,+\infty)\times\bar I)$ and satisfies
$$\frac{\partial w}{\partial t}=d_i\frac{\partial^2w}{\partial x^2}+f_i(\overline{u}(t,x))-f_i(\underline{u}(t,x))\le d_i\frac{\partial^2w}{\partial x^2}+Lw\ \hbox{ in }(0,+\infty)\times I,$$
with $L:=\max\big(\|f'_1\|_{L^\infty([0,\overline{K}])},\|f'_2\|_{L^\infty([0,\overline{K}])}\big)\in[0,+\infty)$. Furthermore, the function $w$ satisfies the interface conditions in~\eqref{m-pb}, since $\underline{u}$ and $\overline{u}$ do so. On the other hand, the nonnegative $C^{\infty}([0,+\infty)\times\R)$ function $(t,x)\mapsto\overline{w}(t,x):=2\|u_0-v_0\|_{L^\infty(\R)}e^{Lt}$ satisfies the interface conditions (since it is independent of $x$) in~\eqref{m-pb} and $\frac{\partial\overline{w}}{\partial t}=L\overline{w}=d_i\frac{\partial^2\overline{w}}{\partial x^2}+L\overline{w}$ in~$[0,+\infty)\times\bar{I}$, for each patch $I$ of type $i\in\{1,2\}$. Notice also that $\overline{w}$ is locally bounded with respect to $t\in[0,+\infty)$. Lastly,
$$0\le w(0,\cdot)=\overline{u}(0,\cdot)-\underline{u}(0,\cdot)=\overline{u}_0-\underline{u}_0\le2\|u_0-v_0\|_{L^\infty(\R)}=\overline{w}(0,\cdot)\ \hbox{ in $\R$}.$$
Proposition~\ref{cp} then implies that $0\le\overline{u}(t,x)-\underline{u}(t,x)=w(t,x)\le\overline{w}(t,x)$ for all $(t,x)\in[0,+\infty)\times\R$. Since $0\le\underline{u}\le\min(u,v)\le\max(u,v)\le\overline{u}$ in~$[0,+\infty)\times\R$ by~\eqref{unvn}, one finally infers that
\be\label{Lipuv}
|u(t,x)-v(t,x)|\le2\|u_0-v_0\|_{L^\infty(\R)}e^{Lt}\ \hbox{ for all }(t,x)\in[0,+\infty)\times\R.
\ee
This yields the Lipschitz continuity of the map $u_0\mapsto u$ from $C^+(\R)\cap L^\infty(\R)$ to $C([0,T]\times\R)\cap L^\infty([0,T]\times\R)$ equipped with the sup norms, for each $T>0$. As a conclusion, the proof of Theorem~\ref{thm-m-wellposedness} is complete.\hfill$\Box$


\section{Existence and uniqueness of a stationary solution}\label{Sec-thm-Liouville}

In this section, we focus on the stationary problem~\eqref{m-pb-elliptic}. In Section~\ref{sec41}, we show Theorem~\ref{thm-2.1-existence} on the existence and non-existence of a positive periodic bounded solution of~\eqref{m-pb-elliptic}. Section~\ref{sec42} is devoted to the proof of Theorem~\ref{thm-2.4-uniqueness} on the uniqueness of such a solution.


\subsection{Existence of solutions: proof of Theorem~\ref{thm-2.1-existence}}\label{sec41}

(i) Assume that \eqref{2.5} is fulfilled and that $0$ is an unstable solution of~\eqref{m-pb-elliptic}, that is, $\lambda_1< 0$, where $\lambda_1$ is the principal eigenvalue of the eigenvalue problem~\eqref{ep-0}, associated with the principal eigenfunction $\phi$. Since $f(x,\cdot)|_{I}=f_i$ is of class $C^1(\mathbb{R})$ for each $x\in\R\!\setminus\!S$ belonging to a patch $I$ of type $i\in\{1,2\}$, there exists $\kappa_0>0$ small enough such that, for all $0<\kappa\le \kappa_0$,
\begin{equation*}
f(x,\kappa\phi(x))\ge \kappa\phi(x) f_s(x,0)+\frac{\lambda_1}{2}\kappa\phi(x)~~\text{for all }x\in\mathbb{R}\!\setminus\!S,
\end{equation*}
hence $\kappa\phi$ then satisfies
\be\label{defkappa2}
-d(x)\kappa\phi''(x)-f(x,\kappa\phi(x))\le -d(x)\kappa\phi''(x)-\kappa\phi(x) f_s(x,0)-\frac{\lambda_1}{2}\kappa\phi(x)=\frac{\lambda_1}{2}\kappa\phi(x)<0
\ee
for all $x\in\mathbb{R}\setminus S$, as well as the interface conditions in~\eqref{m-pb-elliptic}. With $M>0$ as in~\eqref{2.5}, we can then fix $\kappa\in(0,\kappa_0]$ so that $\kappa\phi\le M$ in $\mathbb{R}$. Now, for each $n\in\N$, let $u^n$ be the unique bounded classical solution of~\eqref{tp-1}--\eqref{tp-4} with initial condition $u^n(0,\cdot)=M\delta^n|_{[-nl,nl]}$, with the cut-off function $\delta^n$ given in~\eqref{cut-off func}. From the proof of Theorem~\ref{thm-m-wellposedness}, the sequence $(u^n)_{n\in\N}$ converges monotonically pointwise in~$[0,+\infty)\times\R$ to a nonnegative bounded classical solution $u$ of~\eqref{m-pb}--\eqref{d-f} in $[0,+\infty)\times\R$, with initial condition $M$, and $u(t,x)=u(t,x+l)$ for all $(t,x)\in[0,+\infty)\times\R$. Furthermore, by~\eqref{2.5}, the constant $M$ is a supersolution of~\eqref{tp-1}--\eqref{tp-4} in $[0,+\infty)\times[-nl,nl]$ for each $n\in\N$, in the sense of Definition~\ref{defsubsuper}. Proposition~\ref{bdd-cp} implies that
$$u^n(t,x)\le M\ \hbox{ for all $t\ge0$ and $x\in[-nl,nl]$}.$$
In particular, for each $h\ge0$ and $n\in\N$, one has $u^n(h,x)\le M=u^{n+1}(0,x)$ for all $x\in[-nl,nl]$, together with $u^n(t+h,\pm nl)=0\le u^{n+1}(t,\pm nl)$ for all $t\ge0$. Hence $u^n(t+h,x)\le u^{n+1}(t,x)$ for all $t\ge0$ and $x\in[-nl,nl]$, by Proposition~\ref{bdd-cp} again. Therefore,
$$u(t+h,x)\le u(t,x)\ \hbox{ for all $(t,x)\in[0,+\infty)\times\R$},$$
by passing to the limit as $n\to+\infty$. In other words, the nonnegative continuous function $u$ is non-increasing in $t$, and, together with the periodicity in space and the Schauder estimates of Theorem~\ref{thm-m-wellposedness}, there is a continuous periodic solution $p:\R\to[0,M]$ of~\eqref{m-pb-elliptic} such that $u(t,\cdot)\to p$ uniformly in $\R$ as $t\to+\infty$, and $u(t,\cdot)|_{\bar I}\to p|_{\bar I}$ in $C^2(\bar I)$ for each patch $I\subset\R$. Finally, since the periodic continuous function $\kappa\phi$ has restrictions of class $C^2(\bar I)$ for each patch $I\subset\R$ and satisfies~\eqref{defkappa2} and the interface conditions in~\eqref{m-pb-elliptic} (it is a subsolution of this problem), and since $\kappa\phi\le M=u(0,\cdot)$ in $\R$, Proposition~\ref{cp} implies that $\kappa\phi(x)\le u(t,x)$ for all $t\ge0$ and $x\in\R$, hence $\kappa\phi(x)\le p(x)$ for all $x\in\R$ at the limit $t\to+\infty$. As a conclusion, there exists a positive and periodic continuous solution $p$ of \eqref{m-pb-elliptic} satisfying $\kappa\phi\le p\le M$ in $\mathbb{R}$, that is, Theorem~\ref{thm-2.1-existence}~(i) is proved.

(ii) Next, in addition to~\eqref{2.5}, we assume that~\eqref{2.4} holds, that $p$ is a nonnegative bounded continuous solution to the elliptic problem~\eqref{m-pb-elliptic}, and that $0$ is a stable solution of \eqref{m-pb-elliptic}, that is,~$\lambda_1\ge 0$. Let $\phi$ be the unique positive solution of~\eqref{ep-0}. In~\eqref{2.4}, let us assume that $s\mapsto f_1(s)/s$ is decreasing with respect to $s>0$ (the case when $s\mapsto f_2(s)/s$ is decreasing with respect to~$s>0$ can be handled similarly). We infer that, for every $\gamma>0$,
$$f(x,\gamma\phi(x))=f_1(\gamma\phi(x))<f'_1(0)\gamma\phi(x)=f_s(x,0)\gamma\phi(x)\hbox{ for all $x\in(nl-l_1,nl)$ and $n\in\Z$},$$
while
$$f(x,\gamma\phi(x))=f_2(\gamma\phi(x))\le f'_2(0)\gamma\phi(x)=f_s(x,0)\gamma\phi(x)\hbox{ for all $x\in(nl,nl+l_2)$ and $n\in\Z$}.$$
Hence, for all $\gamma>0$,
\be\label{ineqphi}\left\{\baa{l}
\!\!\!-d_1\gamma\phi''(x)\!-\!f_1(\gamma\phi(x))>-d_1\gamma\phi''(x)\!-\!f_1'(0)\gamma\phi(x)=\lambda_1 \gamma\phi(x)\ge0,~x\!\in\!(nl\!-\!l_1,nl),\vspace{3pt}\\
\!\!\!-d_2\gamma\phi''(x)\!-\!f_2(\gamma\phi(x))\ge-d_2\gamma\phi''(x)\!-\!f_2'(0)\gamma\phi(x)=\lambda_1 \gamma\phi(x)\ge0,~x\!\in\!(nl,nl\!+\!l_2).\eaa\right.
\ee
Since $\phi$ is bounded from below by a positive constant (because it is positive, periodic and continuous), and since $p$ is bounded, one can define
\begin{equation*}
\gamma^*=\inf\big\{\gamma>0, \gamma\phi >p~~\text{in}~\mathbb{R}\big\}\ \in[0,+\infty).
\end{equation*}
Our goal is to show that $\gamma^*=0$. Assume by way of contradiction that $\gamma^*>0$, and set $z:=\gamma^*\phi-p$. Then $z\ge 0$ in $\R$ and there exists a sequence $(x_m)_{m\in\mathbb{N}}$ in $\mathbb{R}$ such that $z(x_m)\to 0$ as $m\to +\infty$. Moreover, $z$ satisfies
\begin{equation}
\label{3.5}
\begin{aligned}
\begin{cases}
-d_1z''(x)-b(x)z(x)>0,~~&x\in(nl-l_1,nl),\cr
-d_2z''(x)-b(x)z(x)\ge 0,~~&x\in(nl,nl+l_2),\cr
z(x^-)=z(x^+),~z'(x^-)=\sigma z'(x^+),~~&x=nl,\cr
z(x^-)=z(x^+),~\sigma z'(x^-)= z'(x^+),~~&x=nl+l_2,\cr
\end{cases}
\end{aligned}
\end{equation}
for some  bounded function $b$ defined in $\mathbb{R}\!\setminus\!S$.

Assume at first that up to a subsequence, $x_m\to \bar x\in  \mathbb{R}$ as $m\to+\infty$. By continuity of $\phi$ and $p$, one has $z(\bar x)=0$. We distinguish two cases. Assume first that $\bar x\in  \mathbb{R}\backslash S$. It is easily seen from the strong elliptic maximum principle and the Hopf lemma, applied by induction from one patch to an adjacent one, that $z\equiv 0$ in $\mathbb{R}$. This is a contradiction with the strict inequality in the first line of~\eqref{3.5}. Thus, $z>0$ in $\mathbb{R}\backslash S$ and $\bar x\in S$, hence the Hopf lemma yields $z'(\bar x^-)<0$ and $z'(\bar x^+)>0$, contradicting the interface condition in~\eqref{3.5}.

In the general case, let $\bar x_m\in (-l_1,l_2]$ be such that $x_m-\bar x_m\in l\mathbb{Z}$. Then up to some subsequence, one can assume that there is $\bar x_\infty\in [-l_1,l_2]$ such that $\bar x_m\to\bar x_\infty$ as $m\to+\infty$. Set $z_m=\gamma^*\phi_m-p_m=\gamma^*\phi-p_m$, where $\phi_m(x):=\phi(x+x_m-\bar x_m)=\phi(x)$ and $p_m(x):=p(x+x_m-\bar x_m)$. Since $d(x)$ and $f(x,\cdot)$ are periodic in $x$, one then infers from~\eqref{m-pb-elliptic} and~\eqref{ineqphi} that each function~$z_m$ satisfies
\begin{equation*}
\begin{aligned}
\begin{cases}
-d_1z''_m(x)-f_1(\gamma^*\phi(x))+f_1(p_m(x))>0,~~&x\in(nl-l_1,nl),\cr
-d_2z''_m(x)-f_2(\gamma^*\phi(x))+f_2(p_m(x))\ge 0,~~&x\in(nl,nl+l_2),\cr
z_m(x^-)=z_m(x^+),~z_m'(x^-)=\sigma z_m'(x^+), &x=nl,\cr
z_m(x^-)=z_m(x^+),~\sigma z_m'(x^-)= z_m'(x^+), &x=nl+l_2.
\end{cases}
\end{aligned}
\end{equation*}
From standard elliptic estimates, it follows that up to some subsequence, the sequences $(p_m)_{m\in\N}$ and $(z_m)_{m\in\N}$ converge as $m\to+\infty$ to some functions $p_\infty$ and $z_\infty$ locally uniformly in $\R$, and in~$C^2(\bar I)$ for each patch $I\subset\mathbb{R}$, with $z_\infty=\gamma^*\phi-p_\infty$ and
\begin{equation}
\label{3.6}
\begin{aligned}
\begin{cases}
-d_1z''_\infty(x)-f_1(\gamma^*\phi(x))+f_1(p_\infty(x))\ge0,~~&x\in(nl-l_1,nl),\cr
-d_2z''_\infty(x)-f_2(\gamma^*\phi(x))+f_2(p_\infty(x))\ge 0,~~&x\in(nl,nl+l_2),\cr
z_\infty(x^-)=z_\infty(x^+),~z_\infty'(x^-)=\sigma z_\infty'(x^+), &x=nl,\cr
z_\infty(x^-)=z_\infty(x^+),~\sigma z_\infty'(x^-)= z_\infty'(x^+), &x=nl+l_2.
\end{cases}
\end{aligned}
\end{equation}
Moreover, $z_\infty\ge 0$ in $\R$, $z_\infty(\bar x_\infty)=0$, and the first inequality in~\eqref{3.6} is actually strict from the strict sign in the first line of~\eqref{ineqphi} applied with $\gamma^*>0$, and from the periodicity of $\phi$. From similar lines as above, one reaches a contradiction by using the strong elliptic maximum principle and the Hopf lemma together with the interface conditions in~\eqref{3.6}. 

Consequently, $\gamma^*=0$, whence $p\equiv 0$. This completes the proof of Theorem~\ref{thm-2.1-existence}.\hfill$\Box$


\subsection{Uniqueness of solutions: proof of Theorem~\ref{thm-2.4-uniqueness}}\label{sec42}

In order to prove the uniqueness of the positive stationary solution, we show the following crucial property.

\begin{proposition}\label{prop-3.2}
Assume~\eqref{2.5} and that $0$ is an unstable solution of \eqref{m-pb-elliptic} $($i.e., $\lambda_1<0$$)$. Let~$p$ be a bounded nonnegative continuous solution of the stationary problem \eqref{m-pb-elliptic}. Then, either~$p\equiv 0$ in $\R$, or $\inf_{\mathbb{R}}p>0$.
\end{proposition}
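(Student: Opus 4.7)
The plan is to argue by contradiction in three stages, the decisive one being the third, where the sign $\lambda_1 < 0$ enters. \textbf{Stage 1 (strong positivity).} First I would show that $p \not\equiv 0$ implies $p > 0$ on all of $\R$. On each patch, $p$ satisfies a linear ODE $-d p'' - c(x) p = 0$ with bounded coefficient $c(x) = \int_0^1 f_s(x, \theta p(x))\, d\theta$, so by one-dimensional ODE uniqueness (a zero of $p \ge 0$ forces $p' = 0$ there) any patch on which $p$ vanishes is one on which $p \equiv 0$. The interface flux conditions $p'(x^-) = \sigma p'(x^+)$ on $S_1$ and $\sigma p'(x^-) = p'(x^+)$ on $S_2$, combined with the Hopf lemma on the non-trivial side and with $\sigma > 0$, then rule out a patch with $p \equiv 0$ being adjacent to one with $p \not\equiv 0$, as well as a zero of $p$ located exactly at an interface.

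\noindent\textbf{Stage 2 (translation limit).} Assuming $\inf_{\R} p = 0$ for contradiction, I pick $x_n$ with $p(x_n) \to 0$; positivity forces $|x_n| \to +\infty$, WLOG $x_n \to +\infty$. Writing $x_n = k_n l + y_n$ with $k_n \in \Z$ and $y_n \in [0, l)$, the translates $p_n := p(\cdot + k_n l)$ again solve \eqref{m-pb-elliptic} by the $l$-periodicity of $d$ and $f$. Interior Schauder estimates patch by patch, together with the interface conditions and Arzel\`a--Ascoli, give along a subsequence a bounded nonnegative limit $p_\infty$, locally uniformly on $\R$ and in $C^2(\bar I)$ on each patch $I$, solving \eqref{m-pb-elliptic} and vanishing at $y_\infty = \lim y_n \in [0, l]$. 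Applying Stage 1 to $p_\infty$ forces $p_\infty \equiv 0$, hence $p_n \to 0$ locally uniformly on $\R$.

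\noindent\textbf{Stage 3 (rescaling and weighted Wronskian).} Pick $N_n \to +\infty$ slowly enough that $\varepsilon_n := \|p_n\|_{L^\infty([-N_n l, N_n l])} \to 0$, and rescale $q_n := p_n / \varepsilon_n$. Using $f(x, s) = f_s(x, 0) s + O(s^2)$, each $q_n$ is positive, $\|q_n\|_{L^\infty([-N_n l, N_n l])} = 1$, satisfies the interface conditions of \eqref{ep-0}, and solves
\[
-d(x)\, q_n''(x) - f_s(x, 0)\, q_n(x) = O(\varepsilon_n) \qquad \text{uniformly on } [-N_n l, N_n l].
\]
Composing with an additional integer translation that pins the point of maximum of $q_n$ inside $[0, l)$, patch-wise Schauder estimates combined with a periodic Harnack inequality for the transmission problem yield (by a diagonal extraction) a bounded nonnegative non-trivial limit $q_\infty$ on $\R$ solving $\mathcal{L}_0 q_\infty = 0$ on $\R \setminus S$ with the interface conditions of \eqref{ep-0}; by Stage 1 applied to this linear equation, $q_\infty > 0$ on $\R$. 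To derive the contradiction, let $\phi$ be the principal eigenfunction of \eqref{ep-0}, and set $\rho = 1$ on type-1 patches and $\rho = 1/k$ on type-2 patches (with $k$ as in \eqref{k}); this weight is precisely the one that cancels the jumps of $\phi q_\infty' - q_\infty \phi'$ at every point of $S_1 \cup S_2$, so that
\[
F(x) := \rho(x)\, d(x)\, \bigl(\phi(x)\, q_\infty'(x) - q_\infty(x)\, \phi'(x)\bigr)
\]
is continuous on $\R$ and satisfies $F'(x) = \lambda_1\, \rho(x)\, \phi(x)\, q_\infty(x) < 0$ on each patch. Hence $F$ is strictly decreasing and bounded on $\R$, which forces $\int_{\R} \rho\, \phi\, q_\infty < +\infty$; a patch Harnack together with the linear equation then yields $q_\infty(x), q_\infty'(x) \to 0$ as $|x| \to +\infty$, so $F(\pm\infty) = 0$. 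A non-increasing continuous function on $\R$ with vanishing limits at $\pm\infty$ is identically zero, so $F \equiv 0$, and then $\lambda_1\, \rho\, \phi\, q_\infty \equiv 0$ yields $q_\infty \equiv 0$, contradicting $q_\infty \not\equiv 0$.

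\noindent\textbf{Main obstacle.} The hard part will be Stage 3: arranging the rescaling and the additional integer translation so that the limit $q_\infty$ is simultaneously globally bounded on $\R$ and non-trivial relies on a uniform patch-wise Harnack estimate for the transmission operator whose constant does not deteriorate with $n$. The specific choice of the weight $\rho$ is also essential, as without it the Wronskian $\phi q_\infty' - q_\infty \phi'$ would jump across every interface and the identity $F' = \lambda_1 \rho \phi q_\infty$ on a globally continuous $F$ would break down, preventing the final integration argument by which the sign $\lambda_1 < 0$ is converted into a contradiction.
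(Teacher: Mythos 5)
Your approach is genuinely different from the paper's: the paper constructs a uniform-in-$y$ positive subsolution $\kappa_0\varphi^y_R$ on a fixed interval $[-R,R]$ after showing (Lemmas~\ref{lemma-3.4}--\ref{lemma-3.6}) that the Dirichlet principal eigenvalue $\lambda^y_R$ converges to $\lambda_1<0$ uniformly in $y$, and then uses a sliding argument to conclude $p(y)\ge\kappa_0\varphi^y_R(0)>0$; you instead propose a translation-plus-rescaling blow-up. Stages~1 and~2 of your proposal are correct and close in spirit to what would appear in any proof. Stage~3, however, has a genuine gap that the weighted Wronskian does not close.

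The gap is the global boundedness of $q_\infty$. After the integer translation, you have $\|q_n\|_{L^\infty(K_n)}=1$ with the maximum in $[0,l)$, but $K_n$ is only a bounded interval of length $2N_nl$ which need not be centered near the origin: if the original maximum sits near $\pm N_nl$, then $K_n$ is essentially a one-sided interval, and outside $K_n$ you only know $q_n\le\sup_\R p/\varepsilon_n\to+\infty$. The local Harnack inequality for the transmission problem does give $\sup_{[-A,A]}q_n\le C_A$ for each fixed $A$ once $K_n\supset[-A-l,A+l]$, but $C_A$ is the Harnack constant on an interval of length $2A$ with a fixed margin, and this constant grows (generically exponentially) with $A$; this is not an issue of the estimate ``deteriorating with $n$'', as your Main Obstacle paragraph suggests, but an unavoidable feature of local Harnack. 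Consequently $q_\infty$ is only controlled locally, and there is no reason for it to be bounded on $\R$; indeed, since $\lambda(\mu)\le\lambda_1<0$ for every $\mu$ by Lemma~\ref{lemma-concave} and the symmetry $\lambda(-\mu)=\lambda(\mu)$, the equation $\mathcal{L}_0q=0$ has no bounded positive Floquet-type solution, so if a positive $q_\infty$ exists at all it must be exponentially unbounded. Your Wronskian argument then breaks at exactly the step ``$F$ is strictly decreasing and bounded on $\R$'': boundedness of $F=\rho\,d\,(\phi q_\infty'-q_\infty\phi')$ requires boundedness of $q_\infty$ and $q_\infty'$, and the chain ``$F$ bounded $\Rightarrow\int\rho\phi q_\infty<\infty\Rightarrow q_\infty\to0$ at $\pm\infty\Rightarrow F(\pm\infty)=0\Rightarrow F\equiv0$'' collapses without it. To make Stage~3 rigorous one would need a contradiction valid for arbitrary (possibly unbounded) positive solutions of $\mathcal{L}_0q_\infty=0$, e.g.\ a monodromy/Floquet argument showing that positivity of $q_\infty$ on all of $\R$ forces a real Floquet exponent $\gamma$ with $\lambda(-\gamma)=0$, contradicting $\lambda(\cdot)\le\lambda_1<0$; this is a different and strictly stronger conclusion than your Wronskian gives, and is not a minor technicality. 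The paper's Dirichlet-eigenvalue route sidesteps the issue entirely, since the comparison is always performed on the fixed compact interval $[-R,R]$.
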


Note that Proposition~\ref{prop-3.2} also holds in particular in the class of periodic solutions. However, we look in Theorem~\ref{thm-2.4-uniqueness} at the uniqueness within a more general class of functions which are not assumed to be \textit{a priori} periodic. Proposition~\ref{prop-3.2}, which implies that any positive solution of~\eqref{m-pb-elliptic} is in fact bounded from below by a positive constant, will be the essence in proving  uniqueness under the additional assumption~\eqref{2.4}.

We prove Proposition \ref{prop-3.2} via a series of lemmas.  First of all, for any $R>0$ and $y\in\R$, we claim that there exist a unique real number (principal eigenvalue) $\lambda^y_R$ and a unique nonnegative continuous and piecewise smooth function (principal eigenfunction) $\varphi^y_R$ in $[-R,R]$ satisfying 
\begin{equation}
\label{3.10}
\begin{aligned}
\begin{cases}
-d(x+y)(\varphi^y_R)''(x)-f_s(x+y,0)\varphi^y_R(x)=\lambda^y_R\varphi^y_R(x),~~~&x\in(-R,R)\backslash(S-y),\\
\varphi^y_R(x^-)=\varphi^y_R(x^+),~~~(\varphi^y_R)'(x^-)=\sigma(\varphi^y_R)'(x^+),~~&x=nl-y\in(-R,R),\\
\varphi^y_R(x^-)=\varphi^y_R(x^+),~\sigma(\varphi^y_R)'(x^-)=(\varphi^y_R)'(x^+),~~~~&x=nl+l_2-y\in(-R,R),\\
\varphi^y_R>0~\text{in}~(-R,R),~
\varphi^y_R(\pm R)=0,~
\Vert\varphi^y_R\Vert_{L^\infty(-R,R)}=1.
\end{cases}
\end{aligned}
\end{equation}

We sketch the proof below. For convenience, we denote by $J_s$ and $K_r$ the finitely many shifted (by $-y$) patches of type 1 and of type 2 in $(-R,R)$ so that
$$(-R,R)\backslash(S-y)=\Big(\bigcup_s J_s\Big)\cup\Big(\bigcup_r K_r\Big).$$
The functions $d(\cdot+y)$ and $f_s(\cdot+y,0)$ are now constant in each patch $J_s$ or $K_r$. Consider the Hilbert space $H=H^1_0(-R,R)$ and the Banach space
$$G=\big\{u\in C([-R,R]):u|_{\overline{J_s}}\in C^1(\overline{J_s}),u|_{\overline{K_r}}\in C^1(\overline{K_r}), u(\pm R)=0\big\},$$
equipped with the norms
$$\left\{\baa{rcl}
\Vert u\Vert_H & = & \displaystyle\sqrt{\sum\limits_s\Vert u\Vert^2_{H^1(J_s)}+\sum\limits_r\frac{1}{k}\Vert u\Vert^2_{H^1(K_r)}}\vspace{3pt}\\
& = & \displaystyle\sqrt{\sum\limits_s\int_{J_s}\big(|u'|^2+u^2\big)+\sum\limits_r\frac{1}{k}\int_{K_r}\big(|u'|^2+u^2\big)}\ge\min\Big(1,\frac1{\sqrt{k}}\Big)\|u\|_{H^1(-R,R)},\vspace{3pt}\\
\Vert u \Vert_G & = & \displaystyle\sum\limits_s\Vert u \Vert_{C^1(\overline{J_s})}+\sum\limits_r\frac{1}{k}\Vert u \Vert_{C^1(\overline{K_r})}.\eaa\right.$$
Set
\be\label{defLambda}
\Lambda:=\max\big(f'_1(0),f'_2(0)\big)+1.
\ee
For $g\in G$, let us solve the following  problem
\begin{equation}
\label{3.10'}
\begin{aligned}
\begin{cases}
-d(\cdot+y) u''-f_s(\cdot+y,0) u+\Lambda u=g,~~~&\text{in}~(-R,R)\backslash(S-y),\\
 u(x^-)= u(x^+),~~~ u'(x^-)=\sigma u'(x^+),~~&x=nl-y\in(-R,R),\\
 u(x^-)= u(x^+),~\sigma u'(x^-)= u'(x^+),~~~~&x=nl+l_2-y\in(-R,R),\\
 u(\pm R)=0,
\end{cases}
\end{aligned}
\end{equation}
first in the weak sense: that is, we look for a solution $u\in H$ such that 
\be\label{weakB}
B(u,z)=\langle g, z \rangle~~\text{for all}~z\in H,
\ee
where the bilinear form $B$ is defined by
\begin{align}\label{defB}
B(u,z)=\sum\limits_s\int_{J_s}\big(d_1 u'z'+(\Lambda-f'_1(0))uz\big)+\sum\limits_r \frac{1}{k}\int_{K_r} \big(d_2 u'z'+(\Lambda-f'_2(0))uz\big),
\end{align}
and 
$$\langle g, z \rangle=\sum\limits_s\int_{J_s} gz+\sum\limits_r\frac{1}{k}\int_{K_r} gz.$$
Clearly, the map $z\mapsto\langle g,z\rangle$ is continuous in $H$, and $B$ is continuous in $H\times H$. Moreover, it is easily seen that, for any $u\in H$,
\begin{align*}
B(u,u)&=\sum\limits_s\int_{J_s}\big(d_1 (u')^2+(\Lambda-f'_1(0))u^2\big)+\sum\limits_r\frac{1}{k}\int_{K_r}\big(d_2 (u')^2+(\Lambda-f'_2(0))u^2\big)\\
&\ge \min\big(d_1,d_2,1\big)\Vert u \Vert^2_H,
\end{align*}
whence $B$ is coercive. The Lax-Milgram theorem yields the existence of a unique $u \in H$ (hence,~$u$ is continuous in $[-R,R]$, by identifying $u$ with its unique continuous representative, and $u(\pm R)\!=\!0$) satisfying~\eqref{weakB}, and $\|u\|_{H^1(-R,R)}\le C_1\|g\|_{L^2(-R,R)}$ for a positive constant $C_1$ only depending on $d_{1,2}$ and~$k$. Thus,
$$\max_{[-R,R]}|u|\le C_2\|g\|_{L^2(-R,R)}\le C_3\|g\|_G$$
for some positive constants $C_2$ and $C_3$ only depending on $d_{1,2}$, $k$, and $R$. Furthermore, owing to the definitions of $B$ in~\eqref{defB} and of $\sigma$ in~\eqref{sigma}, the function $u$ has restrictions in $\mathcal{I}$ belonging to~$H^2(\mathcal{I})$ for each patch $\mathcal I$ of the type $J_s$ or $K_r$ in $(-R,R)\backslash(S-y)$ and $u$ satisfies the equations and the interface conditions in~\eqref{3.10'}. In particular, $-d_iu''+(\Lambda-f'_i(0))u=g$ in $L^2(\mathcal{I})$ for each patch~$\mathcal{I}$ of the type $J_s$ or $K_r$ in $(-R,R)\!\setminus\!(S-y)$, hence $\max_{\mathcal{I}\in(\cup_sJ_s)\cup(\cup_rK_r)}\|{u|_{\mathcal{I}}}''\|_{L^2(\mathcal{I})}\le C_4\|g\|_{L^2(-R,R)}$ for a positive constant $C_4$ only depending on $d_{1,2}$, $k$ and $f'_{1,2}(0)$, while the equations satisfied by~$u$ in each patch $\mathcal{I}$ and the previous estimates also imply that $u|_{\overline{\mathcal{I}}}$ in $C^2(\overline{\mathcal{I}})$ and
$$\max_{\mathcal{I}\in(\cup_sJ_s)\cup(\cup_rK_r)}\big(\max_{\overline{\mathcal{I}}}|{u|_{\overline{\mathcal{I}}}}''|\big)\le C_5\|g\|_G$$
for a positive constant $C_5$ only depending on $d_{1,2}$, $k$, $R$ and $f'_{1,2}(0)$. Notice in particular that~$u$ then belongs to $G$. Using again the equations satisfied by $u$ and the fact that $u$ necessarily has an interior critical (with vanishing derivative) point in $(-R,R)$ thanks to~\eqref{3.10'}, it follows that $\max_{\mathcal{I}\in(\cup_sJ_s)\cup(\cup_rK_r)}\big(\max_{\overline{\mathcal{I}}}|{u|_{\overline{\mathcal{I}}}}'|\big)\le C_6\|g\|_{L^2(-R,R)}\le C_7\|g\|_G$ for some positive constants~$C_6$ and~$C_7$ only depending on $d_{1,2}$, $k$, $R$ and $f'_{1,2}(0)$, and finally that $u|_{\overline{\mathcal{I}}}\in C^3(\overline{\mathcal{I}})$ for each patch $\mathcal I$ of the type $J_s$ or $K_r$ in $(-R,R)\backslash(S-y)$ and
$$\max_{\mathcal{I}\in(\cup_sJ_s)\cup(\cup_rK_r)}\|u|_{\overline{\mathcal{I}}}\|_{C^3(\overline{\mathcal{I}})}\le C_8\|g\|_G$$
for a positive constant $C_8$ only depending on $d_{1,2}$, $k$, $R$ and $f'_{1,2}(0)$.

The mapping $T$: $g\in G \mapsto Tg:=u\in G$ is obviously linear. The previous estimates and the Arzel\`a-Ascoli theorem yield the compactness of~$T$. Let now $\mathcal{K}$ be the cone $\mathcal{K}=\{u\in G:u\ge 0~\text{in}~[-R,R]\}$. Its interior $\mathring{\mathcal{K}}$ is not empty, and $\mathcal{K}\cap(-\mathcal{K})=\{0\}$.  We claim that, if $g\in\mathcal{K}\!\setminus\!\{0\}$, then $u\in\mathring{\mathcal{K}}$. Indeed, by using $z:=u^-=\max(-u,0)\in H$ in~\eqref{weakB} one has
$$-\sum\limits_s\!\int_{J_s}\!\!\!\big(d_1((u^-)')^2+(\Lambda-f'_1(0))(u^-)^2\big)-\sum\limits_r\frac{1}{k}\!\int_{K_r}\!\!\!\big(d_2((u^-)')^2+(\Lambda-f'_2(0))(u^-)^2\big)\!=\!\langle g,u^-\rangle\!\ge\!0,$$
hence $u^-\equiv 0$, that is, $u\ge 0$ in $[-R,R]$. By finitely many applications of the strong elliptic maximum principle and the Hopf lemma, together with the fact that $g\ge\not\equiv0$, one concludes that~$u>0$ in~$(-R,R)$ and that $u'(-R)>0$ and $u'(R)<0$. Therefore, $T(\mathcal{K}\!\setminus\!\{0\})\subset\mathring{\mathcal{K}}$. From the Krein-Rutman theory, there exist a unique positive real number~$\tilde\lambda^y_R$ and a unique function~$\varphi^y_R\in\mathring{K}$ such that $\tilde\lambda^y_R T\varphi^y_R=\varphi^y_R$ and, say, $\|\varphi_R^y\|_{L^\infty(-R,R)}=1$. Hence, $\varphi^y_R$ is continuous in $[-R,R]$, of class $C^3(\overline{\mathcal{I}})$ (and then $C^\infty(\overline{\mathcal{I}})$ by induction) for each patch $\mathcal{I}$ of the type $J_s$ or $K_s$ in $(-R,R)\backslash(S-y)$, and
\begin{equation*}
\begin{aligned}
\begin{cases}
-d(x+y)(\varphi^y_R)''(x)-f_s(x+y,0)\varphi^y_R(x)+\Lambda\varphi^y_R(x)=\tilde\lambda^y_R\varphi^y_R(x), &x\in(-R,R)\backslash(S-y),\\
\varphi^y_R(x^-)=\varphi^y_R(x^+),~~~(\varphi^y_R)'(x^-)=\sigma(\varphi^y_R)'(x^+), &x=nl-y\in(-R,R),\\
\varphi^y_R(x^-)=\varphi^y_R(x^+),~\sigma(\varphi^y_R)'(x^-)=(\varphi^y_R)'(x^+), &x=nl+l_2-y\in(-R,R),\\
\varphi^y_R>0~\text{in}~(-R,R),~\varphi^y_R(\pm R)=0,~
\Vert\varphi^y_R\Vert_{L^\infty(-R,R)}=1.
\end{cases}
\end{aligned}
\end{equation*}

Therefore, $\lambda^y_R:=\tilde\lambda^y_R-\Lambda>-\Lambda=-\max\big(f'_1(0),f'_2(0)\big)-1$ is the first eigenvalue of~\eqref{3.10} associated with a unique continuous function $\varphi^y_R$ in $[-R,R]$ that is positive in $(-R,R)$ and of class $C^\infty(\overline{\mathcal{I}})$ for each patch $\mathcal{I}$ of the type $J_s$ or $K_s$ in $(-R,R)\backslash(S-y)$. Furthermore, for each~$R>0$, the interval $(-R,R)$ contains at least a patch $\mathcal{I}$ of the type $J_s$ or $K_s$ of length larger than or equal to $\ell_R:=\min(l_1,l_2,R)$, hence~$\lambda^y_R\le\max(-f'_1(0)+d_1\pi^2/\ell_R^2,-f'_2(0)+d_2\pi^2/\ell_R^2)$ from the positivity of $\varphi^y_R$ in $\mathcal{I}$. As a consequence, for each $R>0$, there is a constant $N_R$ such that
$$|\tilde\lambda^y_{R'}|+|\lambda^y_{R'}|\le N_R\hbox{ for every $R'\ge R$ and $y\in\R$}.$$
Since both $\lambda^y_R$ and $\varphi^y_R$ are unique, the aforementioned estimates and compactness arguments imply that, for each $R>0$, the maps $y\mapsto \lambda^y_R$ and $y\mapsto \varphi^y_R$ are continuous in $\R$ (the continuity of $y\mapsto\varphi^y_R$ is understood in the sense of the uniform topology in $[-R,R]$). Note also that since~$d$ and $f$ are periodic in $x$, it follows that $\lambda^y_R$ and $\varphi^y_R$ are periodic with respect to $y$ as well.

Similarly, for each $y\in\R$, there exist a unique principal eigenvalue $\lambda^y$ and a unique principal eigenfunction $\phi^y$ of the periodic problem
\begin{equation}
\label{3.11}
\begin{aligned}
\begin{cases}
-d(x+y)(\phi^y)''(x)-f_s(x+y,0)\phi^y(x)=\lambda^y\phi^y(x),\quad&x\in\mathbb{R}\backslash(S-y),\\
\phi^y(x^-)=\phi^y(x^+),~~~(\phi^y)'(x^-)=\sigma(\phi^y)'(x^+),~&x=nl-y,\\
\phi^y(x^-)=\phi^y(x^+),~\sigma(\phi^y)'(x^-)=(\phi^y)'(x^+),~~~&x=nl+l_2-y,\\
\phi^y~\text{is periodic},~ \phi^y>0~\text{in}~\mathbb{R},
~\Vert\phi^y\Vert_{L^\infty(\mathbb{R})}=1,
\end{cases}
\end{aligned}
\end{equation}
where $\phi^y$ is continuous in $\R$ and $\phi^y|_{\overline{\mathcal{I}}}$ is of class $C^\infty(\overline{\mathcal{I}})$ for each shifted patch $\mathcal{I}\subset\R\!\setminus\!(S-y)$. First of all, it is straightforward to observe:

\begin{lemma}\label{lem2}
The principal eigenvalue $\lambda^y$ of~\eqref{3.11} does not depend on $y$, that is, $\lambda^y=\lambda^0=\lambda_1$ for all $y\in\mathbb{R}$, where $\lambda_1$ is the principal eigenvalue of the eigenvalue problem~\eqref{ep-0}.
\end{lemma}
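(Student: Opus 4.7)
The plan is to exhibit a translated copy of the original principal eigenfunction $\phi$ from \eqref{ep-0} that solves the shifted eigenvalue problem \eqref{3.11}, and then invoke the uniqueness of the principal eigenpair established just above.

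More precisely, let $\phi$ be the unique positive, periodic, continuous function with $\|\phi\|_{L^\infty(\R)}=1$ solving \eqref{ep-0} with principal eigenvalue $\lambda_1$. For an arbitrary $y\in\R$, I would define the translated function $\psi(x):=\phi(x+y)$ for $x\in\R$ and check that $\psi$ is admissible for \eqref{3.11} with eigenvalue~$\lambda_1$. The verification is essentially bookkeeping: first, using the $l$-periodicity of $d(\cdot)$ and $f_s(\cdot,0)$, $\psi$ is itself $l$-periodic, continuous, positive and satisfies $\|\psi\|_{L^\infty(\R)}=1$; second, since $\phi|_{\overline{I}}\in C^\infty(\overline{I})$ on every patch $I$, the translate $\psi$ belongs to $C^\infty(\overline{I'})$ on every shifted patch $I'\subset\R\setminus(S-y)$; third, the differential identity $-d(x+y)\psi''(x)-f_s(x+y,0)\psi(x)=\lambda_1\psi(x)$ for $x\in\R\setminus(S-y)$ is immediate from \eqref{ep-0} applied at $x+y\in\R\setminus S$.

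The only point requiring a small check is the compatibility of the interface conditions. A point $x\in\R$ belongs to $S_1-y$ precisely when $x+y\in S_1=l\Z$; at such $x$, using $\psi'(x^\pm)=\phi'((x+y)^\pm)$ and the $S_1$-interface condition in \eqref{ep-0}, one obtains $\psi'(x^-)=\sigma\psi'(x^+)$, which is exactly the $S_1$-type condition imposed in \eqref{3.11} at $x=nl-y$. The analogous check at points in $S_2-y$ gives the $S_2$-type condition $\sigma\psi'(x^-)=\psi'(x^+)$ of \eqref{3.11}. Hence $\psi$ satisfies all the constraints defining a principal eigenpair of \eqref{3.11} with eigenvalue $\lambda_1$.

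Finally, the uniqueness of the principal eigenvalue and eigenfunction of \eqref{3.11}, which was established in the paragraph preceding the lemma (by the same Krein--Rutman argument used for \eqref{3.10}, now applied in the periodic setting), forces $\lambda^y=\lambda_1$ and $\phi^y=\psi=\phi(\cdot+y)$. Since $y\in\R$ was arbitrary, the conclusion follows. I do not anticipate any genuine obstacle: the whole content of the lemma is the translation invariance of the coefficients modulo $l$, so the only thing to be careful about is to match the two interface conditions of \eqref{3.11} with the two shifted types of interface points, as above.
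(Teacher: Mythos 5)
Your proposal is correct and uses essentially the same idea as the paper: translation invariance of the periodic coefficients plus uniqueness of the principal eigenpair. The only (inessential) difference is the direction of the translation: you push the eigenfunction $\phi$ of \eqref{ep-0} forward via $\psi(x)=\phi(x+y)$ and invoke uniqueness for \eqref{3.11}, whereas the paper pulls the eigenfunction $\phi^y$ of \eqref{3.11} back via $\phi^y(\cdot-y)$ and invokes uniqueness for \eqref{ep-0}; the bookkeeping on the shifted interface conditions is identical either way, and your verification of it is accurate.
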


\begin{proof}
Setting $\phi(x):=\phi^y(x-y)$ for $x\in\R$, the function $\phi$ satisfies
\begin{equation*}
\begin{aligned}
\begin{cases}
-d(x)\phi''(x)-f_s(x,0)\phi(x)=\lambda^y\phi(x),~~~~~&x\in\mathbb{R}\backslash S,\\
\phi(x^-)=\phi(x^+),~~~\phi'(x^-)=\sigma\phi'(x^+),~~&x=nl,\\
\phi(x^-)=\phi(x^+),~\sigma\phi'(x^-)=\phi'(x^+),~~~~&x=nl+l_2,\\
\phi~\text{is periodic}, \phi>0~\text{in}~\mathbb{R},
\Vert\phi\Vert_{L^\infty(\mathbb{R})}=1.
\end{cases}
\end{aligned}
\end{equation*}
By uniqueness of the principal eigenvalue, one then has $\lambda^y=\lambda^0=\lambda_1$.
\end{proof}

The second lemma provides a comparison between $\lambda^y_R$ and $\lambda_1$.

\begin{lemma}
\label{lemma-3.4}
For all $y\in\mathbb{R}$ and $R>0$, one has $\lambda^y_R>\lambda_1$.
\end{lemma}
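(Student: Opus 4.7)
The plan is to run a sliding/touching argument comparing the Dirichlet principal eigenfunction $\varphi_R^y$ of \eqref{3.10} with the periodic principal eigenfunction $\phi^y$ of \eqref{3.11}, which, by Lemma~\ref{lem2}, is associated with the eigenvalue $\lambda_1$. Since $\phi^y$ is continuous and positive on $\R$ (hence bounded below by a positive constant on the compact interval $[-R,R]$) while $\varphi_R^y \le 1$ and $\varphi_R^y(\pm R)=0$, the quantity
\begin{equation*}
t^* := \sup\bigl\{\,t>0 : t\varphi_R^y \le \phi^y \ \text{on } [-R,R]\,\bigr\}
\end{equation*}
is well defined and lies in $(0,+\infty)$. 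Setting $w := \phi^y - t^* \varphi_R^y$, by construction $w\ge 0$ on $[-R,R]$, it touches zero at some point $\bar x\in[-R,R]$, and $w(\pm R)=\phi^y(\pm R)>0$. In particular $\bar x\in(-R,R)$.

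Next I would derive the elliptic inequality satisfied by $w$. Subtracting the two eigenvalue equations (and using $\lambda^y=\lambda_1$ from Lemma~\ref{lem2}) shows that, on each shifted patch $\mathcal{I}\subset(-R,R)\setminus(S-y)$,
\begin{equation*}
-d(\cdot+y)\,w'' - \bigl(f_s(\cdot+y,0)+\lambda_R^y\bigr)\,w \;=\; (\lambda_1-\lambda_R^y)\,\phi^y,
\end{equation*}
while the interface conditions in \eqref{3.10} and \eqref{3.11} are identical and are therefore inherited by~$w$ at each point of $S-y$ in $(-R,R)$. Arguing by contradiction, assume $\lambda_R^y\le\lambda_1$. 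Then the right-hand side is nonnegative, and adding $\Lambda w$ to both sides for $\Lambda$ large enough (e.g.\ as in \eqref{defLambda}) converts this into an elliptic inequality of the form $-d(\cdot+y) w'' + c(\cdot) w \ge 0$ with a strictly positive zeroth-order coefficient $c(\cdot)$, satisfied patchwise by the nonnegative function $w$.

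I would then conclude by a strong maximum principle / Hopf lemma argument run patch by patch, exactly of the type already employed in the proof of Theorem~\ref{thm-2.1-existence}(ii). If $\bar x$ lies in the interior of a patch, the strong maximum principle forces $w\equiv 0$ on that patch; if instead $\bar x\in S-y$, the Hopf lemma applied from each side of $\bar x$ yields $w'(\bar x^-)<0$ and $w'(\bar x^+)>0$, which is incompatible with the flux interface condition $w'(\bar x^-)=\sigma w'(\bar x^+)$ (or $\sigma w'(\bar x^-)= w'(\bar x^+)$) since $\sigma>0$. Propagating the conclusion $w\equiv 0$ from one patch to the next through the interfaces (again via the Hopf lemma together with the flux matching, noting that $w'$ would have to be nonzero on one side of the interface if $w\not\equiv 0$ just beyond, contradicting the matching with a zero derivative on the other side) yields $w\equiv 0$ on the whole of $[-R,R]$, contradicting $w(\pm R)>0$. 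Hence $\lambda_R^y>\lambda_1$, as required.

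The only real technical subtlety is making the strong maximum principle rigorous in the presence of discontinuous diffusion and the nonstandard flux condition with $\sigma\neq 1$. This is, however, precisely the patchwise maximum principle / Hopf induction already carried out in Section~\ref{sec41}, so the main step here is essentially to set up the sliding correctly and invoke that machinery; no new analytical tool is required.
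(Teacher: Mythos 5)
Your proof is correct and follows essentially the same strategy as the paper's: a sliding argument with $t^*=\kappa^*$, touching at an interior point, then strong maximum principle and Hopf lemma applied patchwise together with the flux interface conditions to deduce $w\equiv 0$, contradicting $w(\pm R)=\phi^y(\pm R)>0$. The only cosmetic difference is that you explicitly add $\Lambda w$ to make the zeroth-order coefficient nonnegative, a step the paper leaves implicit.
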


\begin{proof}
Fix any $y\in\R$ and $R>0$, and assume by way of contradiction that $\lambda^y_R\le \lambda_1$. Notice that the continuous function $\varphi^y_R$ satisfies
\begin{equation}
\label{3.13}
\begin{aligned}
\begin{cases}
-d(x\!+\!y)(\varphi^y_R)''\!-\!f_s(x\!+\!y,0)\varphi^y_R\!-\!\lambda_1\varphi^y_R\!=\!(\lambda^y_R\!-\!\lambda_1)\varphi^y_R, &\text{in}~(-R,R)\backslash(S-y),\\
\varphi^y_R(x^-)=\varphi^y_R(x^+),~~~(\varphi^y_R)'(x^-)=\sigma(\varphi^y_R)'(x^+), &x=nl-y\in(-R,R),\\
\varphi^y_R(x^-)=\varphi^y_R(x^+),~\sigma(\varphi^y_R)'(x^-)=(\varphi^y_R)'(x^+), &x=nl+l_2-y\in(-R,R),\\
\varphi^y_R>0~\text{in}~(-R,R),~~\varphi^y_R(\pm R)=0,~~
\Vert\varphi^y_R\Vert_{L^\infty(-R,R)}=1,
\end{cases}
\end{aligned}
\end{equation}
while the continuous solution $\phi^y=\phi(\cdot+y)$ of~\eqref{3.11} satisfies
\begin{equation*}
\begin{aligned}
\begin{cases}
-d(x+y)(\phi^y)''-f_s(x+y,0)\phi^y-\lambda_1\phi^y=0, &\text{in}~(-R,R)\backslash(S-y),\\
\phi^y(x^-)=\phi^y(x^+),~~~(\phi^y)'(x^-)=\sigma(\phi^y)'(x^+), &x=nl-y\in(-R,R),\\
\phi^y(x^-)=\phi^y(x^+),~\sigma(\phi^y)'(x^-)=(\phi^y)'(x^+), &x=nl+l_2-y\in(-R,R),\\
\phi^y>0,~\qquad\quad~&\text{in}~[-R,R],
\end{cases}
\end{aligned}
\end{equation*}
where $\phi$ is the principal eigenfunction of~\eqref{ep-0}. Therefore, $\phi^y>\kappa\varphi^y_R$ in $[-R,R]$ for all $\kappa>0$ small enough. Define
\begin{align*}
\kappa^*=\sup\big\{\kappa>0: \phi^y>\kappa\varphi^y_R~\text{in}~[-R,R]\big\}\ \in(0,+\infty).
\end{align*}
By continuity, $\phi^y\ge\kappa^*\varphi^y_R$ in $[-R,R]$ and there exists $x_0\in[-R,R]$ such that $\phi^y(x_0)=\kappa^*\varphi^y_R(x_0)$. But since $\phi^y>0$ in $[-R,R]$ and $\varphi^y_R(\pm R)=0$, one infers  that $x_0\in(-R,R)$. On the  other hand,  the function $\kappa^*\varphi^y_R$ satisfies \eqref{3.13}, hence
\begin{align*}
-d(x+y)(\kappa^*\varphi^y_R)''-f_s(x+y,0)\kappa^*\varphi^y_R-\lambda_1\kappa^*\varphi^y_R\le0~\text{in}~(-R,R)\backslash(S-y),
\end{align*}
thanks to the assumption $\lambda^y_R\le \lambda_1$. It then follows from finitely many applications of the strong elliptic maximum principle and the Hopf lemma that $\kappa^*\varphi^y_R\equiv \phi^y$ in $(-R,R)$ and then in $[-R,R]$ by continuity, a contradiction with the boundary conditions at $x=\pm R$. Consequently, $\lambda^y_R>\lambda_1$ for all~$y\in\mathbb{R}$ and $R>0$.
\end{proof}

For any two positive real numbers  $R_1<R_2$, by replacing $\lambda^y_R$ with $\lambda^y_{R_1}$ and $\lambda_1$ with $\lambda^y_{R_2}$ in the above proof, and by noticing that $\varphi^y_{R_2}>0$ in $[-R_1,R_1]$,  a similar argument as that of Lemma~\ref{lemma-3.4} implies that $\lambda^y_{R_1}>\lambda^y_{R_2}$. That is,

\begin{lemma}
\label{lemma-3.5}
For all $y\in\mathbb{R}$, the function $R\mapsto \lambda^y_R$ is decreasing in $R>0$.
\end{lemma}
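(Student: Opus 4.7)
The plan is to mimic the proof of Lemma~\ref{lemma-3.4} almost verbatim, using $\varphi^y_{R_2}$ in place of the periodic principal eigenfunction $\phi^y$ and $\varphi^y_{R_1}$ in place of $\varphi^y_R$. Fix $y\in\R$ and $0<R_1<R_2$, and argue by contradiction, assuming $\lambda^y_{R_1}\le\lambda^y_{R_2}$. The crucial observation that enables the sliding argument is that, by~\eqref{3.10}, the function $\varphi^y_{R_2}$ is continuous and strictly positive on the whole compact interval $[-R_1,R_1]\subset(-R_2,R_2)$, while $\varphi^y_{R_1}$ is bounded and vanishes at $\pm R_1$. Hence the set $\{\kappa>0:\varphi^y_{R_2}>\kappa\varphi^y_{R_1}\text{ in }[-R_1,R_1]\}$ contains all small positive $\kappa$ and is bounded above, so
$$\kappa^*:=\sup\big\{\kappa>0:\varphi^y_{R_2}>\kappa\varphi^y_{R_1}\ \text{in }[-R_1,R_1]\big\}\in(0,+\infty)$$
is well defined, and by continuity $\varphi^y_{R_2}\ge\kappa^*\varphi^y_{R_1}$ in $[-R_1,R_1]$ with equality at some $x_0\in[-R_1,R_1]$. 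Because $\varphi^y_{R_2}(\pm R_1)>0=\kappa^*\varphi^y_{R_1}(\pm R_1)$, necessarily $x_0\in(-R_1,R_1)$.

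Next, I would set $w:=\varphi^y_{R_2}-\kappa^*\varphi^y_{R_1}$ in $[-R_1,R_1]$. Subtracting the two eigenvalue equations in~\eqref{3.10} gives, in each patch $\mathcal{I}\subset(-R_1,R_1)\setminus(S-y)$,
$$-d(x+y)w''-\big(f_s(x+y,0)+\lambda^y_{R_2}\big)w=(\lambda^y_{R_2}-\lambda^y_{R_1})\,\kappa^*\varphi^y_{R_1}(x)\ge0,$$
using the assumption $\lambda^y_{R_1}\le\lambda^y_{R_2}$; the continuity and flux interface conditions of~\eqref{3.10} are inherited by $w$ since they are linear. Thus $w\ge0$ satisfies a linear elliptic supersolution inequality with bounded coefficient of the zeroth order term (of arbitrary sign), together with the nonstandard matching conditions at the finitely many interior interfaces in $(-R_1,R_1)$.

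The final step is to propagate the interior zero $w(x_0)=0$ through all of $[-R_1,R_1]$ by a finite induction on adjacent patches, as in the proofs of Theorem~\ref{thm-2.1-existence} and Lemma~\ref{lemma-3.4}. If $x_0$ lies in the interior of a patch, the strong elliptic maximum principle forces $w\equiv0$ there; if $x_0\in S-y$, the Hopf lemma applied on either side combined with the interface condition (which relates $w'(x_0^-)$ and $w'(x_0^+)$ through the positive factor $\sigma$ or $1/\sigma$) rules out strict negativity of either one-sided derivative, so again $w\equiv0$ on at least one of the adjacent patches. At any interior interface where $w$ vanishes identically on one side, the interface condition transfers both the value and the derivative to the other side, and the Hopf lemma again forces $w\equiv0$ on the next patch. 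Iterating through the finitely many patches of $(-R_1,R_1)$ yields $w\equiv0$ on $[-R_1,R_1]$, contradicting $w(\pm R_1)=\varphi^y_{R_2}(\pm R_1)>0$. Therefore $\lambda^y_{R_1}>\lambda^y_{R_2}$, as claimed.

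The only delicate step is the patch-by-patch propagation across the nonstandard interface conditions, since the discontinuous flux prevents a one-shot application of the strong maximum principle on the whole interval; however, this is exactly the same mechanism already used twice in the paper (in Theorem~\ref{thm-2.1-existence}(ii) and in Lemma~\ref{lemma-3.4}), and it goes through verbatim because the relation $[-\mu\psi+\psi']$-type interface condition degenerates, for $\mu=0$, into a positive multiple tying the two one-sided derivatives with the same sign.
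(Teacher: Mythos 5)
Your argument is correct and is exactly what the paper does: the authors explicitly reduce Lemma~\ref{lemma-3.5} to the proof of Lemma~\ref{lemma-3.4} with $\lambda_1$ replaced by $\lambda^y_{R_2}$ and $\phi^y$ by $\varphi^y_{R_2}$ (which is positive on $[-R_1,R_1]\subset(-R_2,R_2)$), and you have written out that substitution in full, including the patch-by-patch strong maximum principle/Hopf lemma propagation across the interfaces. Nothing to add.
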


The last result before the proof of Proposition~\ref{prop-3.2} is the following convergence result.

\begin{lemma}
\label{lemma-3.6}
One has $\lim_{R\to+\infty}\lambda^y_R=\lambda_1$ uniformly in $y\in\mathbb{R}$.
\end{lemma}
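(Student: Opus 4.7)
The plan is to combine the monotonicity and lower bound from Lemmas~\ref{lemma-3.4}--\ref{lemma-3.5} with a uniform variational upper bound of the form $\lambda^y_R \le \lambda_1 + C/R$, with $C$ independent of $y$; these two facts would immediately give the desired uniform convergence.

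To produce the upper bound, I would exploit that the shifted operator $\mathcal{L}^y_0 u := -d(\cdot+y)u'' - f_s(\cdot+y,0) u$ with the interface/Dirichlet domain of~\eqref{3.10} is symmetric with respect to the weighted inner product
\begin{equation*}
\langle u,v \rangle_k := \sum_s \int_{J_s} uv + \sum_r \frac{1}{k} \int_{K_r} uv,
\end{equation*}
because the relation $\sigma = d_2/(kd_1)$ makes the boundary terms from integration by parts cancel at every interior interface, exactly as happens when proving that $A$ is maximal monotone in Lemma~\ref{lemma-A maximal monotone}. Combined with the compactness already established in the Krein-Rutman step before~\eqref{3.11}, self-adjointness gives the variational characterization
\begin{equation*}
\lambda^y_R = \inf_{u \in H^1_0(-R,R) \setminus \{0\}} \frac{\mathcal{A}^y(u,u)}{\langle u,u \rangle_k}, \quad \mathcal{A}^y(u,u) := \sum_s \int_{J_s}\bigl[d_1 (u')^2 - f'_1(0) u^2\bigr] + \sum_r \frac{1}{k}\int_{K_r}\bigl[d_2 (u')^2 - f'_2(0) u^2\bigr].
\end{equation*}

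The trial function I would plug in is $u = \chi_R \phi^y$, where $\phi^y(x) := \phi(x+y)$ is the shifted periodic principal eigenfunction of~\eqref{3.11} (associated with $\lambda_1$, by Lemma~\ref{lem2}) and $\chi_R \in C^\infty_c(-R,R)$ is a cut-off with $\chi_R \equiv 1$ on $[-R+1, R-1]$ and $\|\chi_R'\|_\infty$ bounded independently of $R$. Testing the weak equation $\mathcal{A}^y(\phi^y, v) = \lambda_1 \langle \phi^y, v \rangle_k$ against $v = \chi_R^2 \phi^y$---legitimate because $\chi_R^2 \phi^y \in H^1_0(-R,R)$ and the same interface cancellation applies---and expanding $((\chi_R \phi^y)')^2 = (\chi_R')^2 (\phi^y)^2 + 2\chi_R \chi_R' \phi^y (\phi^y)' + \chi_R^2((\phi^y)')^2$, a standard IMS-type rearrangement yields the clean identity
\begin{equation*}
\frac{\mathcal{A}^y(\chi_R \phi^y, \chi_R \phi^y)}{\langle \chi_R \phi^y, \chi_R \phi^y \rangle_k} = \lambda_1 + \frac{\sum_s \int_{J_s} d_1 (\chi_R')^2 (\phi^y)^2 + \sum_r \frac{1}{k}\int_{K_r} d_2 (\chi_R')^2 (\phi^y)^2}{\sum_s \int_{J_s} \chi_R^2 (\phi^y)^2 + \sum_r \frac{1}{k}\int_{K_r} \chi_R^2 (\phi^y)^2}.
\end{equation*}

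Because $\phi$ is continuous, periodic and positive with $\|\phi\|_{L^\infty(\R)}=1$, there exists $m > 0$ independent of $y$ such that $m \le \phi^y \le 1$ on $\R$. The numerator of the error term is then bounded above by a constant depending only on $d_{1,2}$, $k$ and $\|\chi_R'\|_\infty$, whereas the denominator is at least $cR$ for $R \ge 2$ and some $c > 0$ depending only on $m$ and $k$. This gives $\lambda^y_R \le \lambda_1 + C/R$ with $C$ independent of $y$, and Lemma~\ref{lemma-3.4} then closes the argument. The main technical point to watch is the justification of the interface boundary-term cancellations in both the weak formulation and the Rayleigh-quotient expansion; once one observes that these cancellations are precisely the ones encoded in $\sigma = d_2/(kd_1)$ (and already used in constructing $\lambda^y_R$), the remainder is routine.
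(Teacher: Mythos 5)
Your proposal is correct and essentially recreates the paper's argument (variational characterization plus a cut-off trial function built from the periodic eigenfunction $\phi^y$), but with a few genuine streamlinings that are worth spelling out. The paper takes the minimum of the Rayleigh quotient over $E_R$, the operator domain that already encodes the interface conditions; for the trial function $\phi^y\chi^y_R$ to belong to $E_R$, the paper must impose $(\chi^y_R)'(x)=0$ at every shifted interface $x\in S-y$. You instead take the infimum over $H^1_0(-R,R)$, the form domain, for which no such constraint on $\chi_R$ is needed; the two quantities coincide because the operator domain is a core for the form and the eigenfunctions diagonalize the form. Second, the paper estimates the error term $D_R$ and the denominator $\langle\psi^y_R,\psi^y_R\rangle_R$ somewhat loosely and only concludes $\lambda^y_R\to\lambda_1$ pointwise in $y$, then invokes Dini's theorem together with the earlier-established periodicity and continuity of $y\mapsto\lambda^y_R$ to upgrade to uniform convergence. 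Your IMS rearrangement instead delivers the exact identity with error term $\int d(\chi_R')^2(\phi^y)^2/\|\chi_R\phi^y\|^2_k$, from which the quantitative bound $\lambda_1<\lambda^y_R\le\lambda_1+C/R$ with $C$ independent of $y$ follows immediately, so uniformity in $y$ comes for free without Dini. The interface cancellations you flag (both for the weak formulation of $\phi^y$ against $\chi_R^2\phi^y\in H^1_0(-R,R)$ and for the symmetry of the operator) do hold exactly as you say: they are encoded in $\sigma=d_2/(kd_1)$ and are the same cancellations used in Lemma~\ref{lemma-A maximal monotone}. One small thing to note for completeness: since $\phi^y$ is only Lipschitz across interfaces (the derivative jumps by the factor $\sigma$), the product rule $((\chi_R\phi^y)')^2=(\chi_R')^2(\phi^y)^2+2\chi_R\chi_R'\phi^y(\phi^y)'+\chi_R^2((\phi^y)')^2$ must be read patchwise, but this is harmless since the interface set has measure zero.
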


\begin{proof}
First of all, from Lemma~\ref{lemma-3.5} and the periodicity and continuity with respect to $y$, it is sufficient to show that $\lambda^y_R\to\lambda_1$ as $R\to+\infty$ for each $y\in\R$, from Dini's theorem. So let us fix~$y\in\R$ in the proof. For $R>0$, consider the elliptic operator $\mathcal{L}^y_Ru:=-d(x+y)u''-f_s(x+y,0)u$ with domain
$$E_R=\big\{\psi\in H^1_0(-R,R):\psi|_{\mathcal{I}}\in H^2(\mathcal{I})\hbox{ for all $\mathcal{I}$, and $\psi$ satisfies the interface conditions in}~\eqref{3.10}\big\},$$
where $\mathcal{I}$ is any patch of the type $J_s$ or $K_r$ in $(-R,R)\!\setminus\!(S-y)$. Note in particular that the principal eigenfunction $\varphi^y_R$ of~\eqref{3.10} belongs to $E_R\!\setminus\!\{0\}$. Owing to~\eqref{sigma}, the operator $\mathcal{L}^y_R$ is symmetric with respect to the inner product
\begin{align*}
\langle u,v \rangle_R=\sum\limits_s\int_{J_s} uv+\sum\limits_r\frac{1}{k}\int_{K_r} uv,\ \ u,v\in L^2(-R,R).
\end{align*}
Therefore, one has the following variational formula for $\lambda^y_R$:
\begin{align}
\label{3.15}
\lambda^y_R=\min_{\psi\in E_R\!\setminus\!\{0\}}\frac{B(\psi,\psi)-\Lambda\langle \psi,\psi \rangle_R}{\langle \psi,\psi \rangle_R}=\min_{\psi\in E_R\!\setminus\!\{0\}}\frac{\langle \mathcal{L}^y_R\psi,\psi \rangle_R}{\langle \psi,\psi \rangle_R},
\end{align}
with $\Lambda$ and $B$ as in~\eqref{defLambda} and~\eqref{defB}. It is easy to see that one can choose a family of $C^2(\R)$ functions $(\chi^y_R)_{R\ge 2}$, such that $\sup_{R\ge 2}\|\chi^y_R\|_{C^2(\R)}<+\infty$ and, for each $R\ge 2$,
\begin{align*}
\begin{cases}
\chi^y_R(x)=1 & \text{for all}~x\in[-R+1,R-1],\\
\chi^y_R(x)=0 & \text{for all}~x\in(-\infty,-R]\cup[R,+\infty),\\
(\chi^y_R)'(x)=0 & \text{for all }x\in S-y,\\
0\le \chi^y_R\le 1 & \text{in }\R.
\end{cases}
\end{align*}
Let $\phi^y$ be the solution of \eqref{3.11}. The function $\psi^y_R:=\phi^y\chi^y_R$ belongs to $E_R\!\setminus\!\{0\}$ and 
\begin{align*}
\frac{\langle \mathcal{L}^y_R\psi^y_R,\psi^y_R \rangle_R}{\langle \psi^y_R,\psi^y_R \rangle_R}=\frac{\langle \mathcal{L}^y_R\phi^y,\phi^y \rangle_{R-1} + D_R}{\langle \psi^y_R,\psi^y_R \rangle_R}=\frac{\lambda_1\langle\phi^y,\phi^y \rangle_{R-1} + D_R}{\langle \psi^y_R,\psi^y_R \rangle_R}
\end{align*}
from Lemma~\ref{lem2}, where
\begin{align*}
D_R&=\langle\mathcal{L}^y_R\psi^y_R,\psi^y_R \rangle_R - \langle \mathcal{L}^y_R\phi^y,\phi^y \rangle_{R-1}\\
&=\sum_{s}\int_{\widetilde J_s}-d(\cdot+y)(\phi^y\chi^y_R)''(\phi^y\chi^y_R)-f_s(\cdot+y,0)(\phi^y\chi^y_R)^2\\
&~~+\sum\limits_r\frac{1}{k}\int_{\widetilde K_r} -d(\cdot+y)(\phi^y\chi^y_R)''(\phi^y\chi^y_R)-f_s(\cdot+y,0)(\phi^y\chi^y_R)^2,
\end{align*}
and $\widetilde J_s$ and $\widetilde K_r$ stand for the patches of type 1 and 2 in $\big((-R,-R+1)\cup(R-1,R)\big)\setminus(S-y)$, that is, $\big((-R,-R+1)\cup(R-1,R)\big)\setminus(S-y)=(\cup_s \widetilde J_s)\cup(\cup_r \widetilde K_r)$. Since $\phi^y$ is periodic and~$\phi^y|_{\overline{\mathcal{I}}}\in C^2(\overline{\mathcal{I}})$ (and even $C^\infty(\overline{\mathcal{I}})$) for each patch $\mathcal{I}\subset\R\setminus(S-y)$, the quantities $\|\phi^y|_{\overline{\mathcal{I}}}\|_{C^2(\overline{\mathcal{I}})}$ are bounded independently of $\mathcal{I}$. Since $\sup_{R\ge 2}\|\chi^y_R\|_{C^2(\R)}<+\infty$, it follows that there exists $C>0$ such that~$|D_R|\le C$ for all $R\ge2$. Likewise, one also has
$$\sup_{R\ge2}|\langle\psi^y_R,\psi^y_R \rangle_R - \langle \phi^y,\phi^y \rangle_{R-1}|<+\infty.$$
On the other hand, since $\phi^y$ is continuous, positive and periodic, there exists $\delta>0$ such that $\phi^y\ge\delta>0$ in $\mathbb{R}$, hence $\langle \phi^y,\phi^y\rangle_{R-1}\ge2\min(1,1/k)\delta^2(R-1)$ and
\begin{align*}
\frac{\langle\psi^y_R,\psi^y_R \rangle_R }{\langle \phi^y,\phi^y \rangle_{R-1}}\to 1~~\text{as}~R\to+\infty.
\end{align*}
Using the estimates above, one gets that 
\begin{align*}
\frac{\langle \mathcal{L}^y_R\psi^y_R,\psi^y_R \rangle_R}{\langle \psi^y_R,\psi^y_R \rangle_R}\to \lambda_1~~\text{as}~R\to+\infty,
\end{align*}
and, together with \eqref{3.15} and Lemma \ref{lemma-3.4}, it follows that $\lambda^y_R\to \lambda_1$ as $R\to+\infty$. As already emphasized, this provides the desired conclusion.
\end{proof}

Now we are in a position to give the proof of Proposition \ref{prop-3.2}.

\begin{proof}[Proof of Proposition~$\ref{prop-3.2}$] Assume~\eqref{2.5} and $\lambda_1<0$. Let $p$ be a continuous nonnegative bounded solution of the stationary problem \eqref{m-pb-elliptic}, with $p|_{\bar I}\in C^2(\bar I)$ for each patch $I$ in $\mathbb{R}$. Assume that~$p\not \equiv 0$. By an immediate induction, the strong maximum principle and the Hopf lemma then imply that $p>0$ in $\mathbb{R}$. Now, from Lemma~\ref{lemma-3.6}, there is $R>0$ such that
\begin{align}
\label{3.25}
\forall\,y\in\mathbb{R},~~~\lambda^y_R<\frac{\lambda_1}{2}<0,
\end{align}
where $(\lambda_R^y,\varphi^y_R)$ denotes the eigenpair of \eqref{3.10}. From~\eqref{2.5}, one can choose $\kappa_0>0$ small enough such that, for all $0<\kappa\le \kappa_0$ and $y\in\R$,
\begin{align}
\label{3.24}
f(x+y,\kappa\varphi^y_R(x))\ge f_s(x+y,0)\kappa\varphi^y_R(x)+\frac{\lambda_1}{2}\kappa\varphi^y_R(x)~~\text{for all}~x\in[-R,R]\backslash(S-y).
\end{align}
For each $y\in\R$, the function $p^y:=p(\cdot+y)$ satisfies
\begin{equation}
\label{3.26}
\begin{aligned}
\begin{cases}
-d(x+y)(p^y)''(x)=f(x+y,p^y(x)),\qquad~~\,x\in\mathbb{R}\backslash(S-y),\\
p^y(x^-)=p^y(x^+),~(p^y)'(x^-)=\sigma (p^y)'(x^+),~~x=nl-y,\\
p^y(x^-)=p^y(x^+),~\sigma (p^y)'(x^-)=(p^y)'(x^+),~~x=nl+l_2-y,
\end{cases}
\end{aligned}
\end{equation}
while, for each $\kappa\in(0,\kappa_0]$, the continuous function $\kappa\varphi^y_R$ satisfies
\begin{equation}\label{kappaphi}
\begin{aligned}
\begin{cases}
-d(x+y)\kappa(\varphi^y_R)''(x)-f_s(x+y,0)\kappa\varphi^y_R(x)=\lambda^y_R\kappa\varphi^y_R(x),&\text{in}~(-R,R)\backslash(S-y),\\
\kappa\varphi^y_R(x^-)=\kappa\varphi^y_R(x^+),~~\kappa(\varphi^y_R)'(x^-)=\sigma\kappa(\varphi^y_R)'(x^+),&x=nl-y\in(-R,R),\\
\kappa\varphi^y_R(x^-)=\kappa\varphi^y_R(x^+),\,\sigma\kappa(\varphi^y_R)'(x^-)=\kappa(\varphi^y_R)'(x^+),&x=nl+l_2-y\in(-R,R),\\
\kappa\varphi^y_R>0~\text{in}~(-R,R),~~\kappa\varphi^y_R(\pm R)=0.
\end{cases}
\end{aligned}
\end{equation}
It then follows from \eqref{3.25}--\eqref{3.24} that, for each $\kappa\in(0,\kappa_0]$ and $y\in\R$,
\begin{align}
\label{3.28}
-d(x\!+\!y)\kappa(\varphi^y_R)''(x)\!-\!f(x\!+\!y,\kappa\varphi^y_R(x))\!\le\!\Big(\lambda^y_R\!-\!\frac{\lambda_1}{2}\Big)\kappa\varphi^y_R(x)\!<\!0~\text{for}~x\!\in\!(-R,R)\backslash(S\!-\!y).
\end{align}
 	
Let us finally consider any $y\in\R$ and prove that $p^y\ge\kappa_0\varphi^y_R$ in $[-R,R]$. Assuming not and using the continuity and positivity of $p^y$ and the continuity of $\varphi^y_R$, one can then define
\begin{align*}
\kappa^*=\sup\{\kappa\in (0, \kappa_0] : p^y\ge\kappa\varphi^y_R~\text{in}~[-R,R]\}\in(0,\kappa_0)
\end{align*}
and one has $p^y\ge\kappa^*\varphi^y_R$ in $[-R,R]$ with equality at a point $x_0\in[-R,R]$. Since $p^y>0$ in $\mathbb{R}$ and $\varphi^y_R(\pm R)=0$, there holds $x_0\in(-R,R)$. From~\eqref{3.26}--\eqref{3.28} and finitely many applications of the strong maximum principle and the Hopf lemma, one gets that $p^y\equiv\kappa^*\varphi^y_R$ in $(-R,R)$ and then in $[-R,R]$ by continuity, a contradiction with the boundary conditions at $x=\pm R$. As a consequence, $p^y\ge\kappa_0\varphi^y_R$ in $[-R,R]$.
 	
Thus, $p(y)=p^y(0)\ge\kappa_0\varphi^y_R(0)$ for all $y\in\mathbb{R}$. Since the function $y\mapsto \kappa_0\varphi^y_R(0)$ is periodic, continuous and positive, one concludes that $\inf_{\mathbb{R}}p>0$.
\end{proof}

\begin{proof}[Proof  of Theorem~$\ref{thm-2.4-uniqueness}$]
Assume that $f$ satisfies~\eqref{2.5}--\eqref{2.4} and that $\lambda_1<0$. Let $q$ and $p$ be two positive  bounded solutions of~\eqref{m-pb-elliptic} (in the sense that $q$ and $p$ are continuous in $\R$ and have restrictions in $\bar I$ of class $C^2(\bar I)$ for each patch $I\subset\R\backslash S$). Applying Proposition \ref{prop-3.2}, there exists $\varep>0$ such that $q\ge \varep$ and $p\ge \varep$ in $\mathbb{R}$. One can then define the positive real number
\begin{align*}
\gamma^*=\sup\{\gamma>0: q>\gamma p~\text{in}~\mathbb{R}\}\in(0,+\infty).
\end{align*} 
We shall prove that $\gamma^*\ge1$, which will easily yield the conclusion by interchanging the roles of~$p$ and~$q$. Assume by way of contradiction that $\gamma^*<1$, and set $z:=q-\gamma^* p\ge0$. From the definition of $\gamma^*$, there exists a sequence $(x_m)_{m\in\mathbb{N}}$ such that $z(x_m)\to 0$ as $m\to+\infty$.  Moreover, the nonnegative function $z$ is continuous in $\R$, has restrictions in $\bar I$ of class $C^2(\bar I)$ for each patch~$I\subset\R\backslash S$, and it satisfies 
\begin{equation}
\label{3.30}
\begin{aligned}
\begin{cases}
-d(x)z''(x)-f(x,q(x))+\gamma^*f(x,p(x))=0,~~&x\in\mathbb{R}\backslash S,\\
z(x^-)=z(x^+),~z'(x^-)=\sigma z'(x^+),~~&x=nl,\\
z(x^-)=z(x^+),~\sigma z'(x^-)=z'(x^+),~~&x=nl+l_2.
\end{cases}
\end{aligned}
\end{equation}
As in the proof of Theorem~\ref{thm-2.1-existence}, let us assume in~\eqref{2.4} that $s\mapsto f_1(s)/s$ is decreasing with respect to $s>0$ (the case when $s\mapsto f_2(s)/s$ is decreasing with respect to $s>0$ can be handled similarly). Since $\gamma^*\in(0,1)$ and $p$ is positive in $\R$, one then has $\gamma^*f(x,p(x))=\gamma^*f_1(p(x))<f_1(\gamma^*p(x))=f(x,\gamma^*p(x))$ for $x\in(nl-l_1,nl)$ and $n\in\Z$, while $\gamma^*f(x,p(x))=\gamma^*f_2(p(x))\le f_2(\gamma^*p(x))=f(x,\gamma^*p(x))$ for $x\in(nl,nl+l_2)$ and $n\in\Z$. Hence, \eqref{3.30} implies that
\begin{equation*}
\left\{
\begin{aligned}
-d_1z''(x)-f_1(q(x))+f_1(\gamma^*p(x))>0,~~x\in(nl-l_1,nl),\\
-d_2z''(x)-f_2(q(x))+f_2(\gamma^*p(x))\ge 0,~~x\in(nl,nl+l_2).
\end{aligned}\right.
\end{equation*}
Therefore, $z$ satisfies a problem of the type~\eqref{3.5} for some bounded function $b$ defined in~$\R\!\setminus\!S$. Hence, as in the proof of Theorem~\ref{thm-2.1-existence}, if $x_m\to\bar x\in\R$ up to extraction of a subsequence, one gets a contradiction by using the strong maximum principle and the Hopf lemma. In the general case,  let $\overline x_m\in(-l_1,l_2]$ be such that $x_m-\overline x_m\in l\mathbb{Z}$, and let $\overline x_\infty\in[-l_1,l_2]$ be such that~$\overline x_m\to\overline x_\infty$ as $m\to+\infty$, up to extraction of some subsequence. Next, set $z_m=q_m-\gamma^* p_m$, where $q_m(x):=q(x+x_m-\overline x_m)$ and $p_m(x):=p(x+x_m-\overline x_m)$. Since $d(x)$ and $f(x,u)$ are periodic with respect to $x$, it follows from \eqref{3.30} that the functions $z_m$'s satisfy
\begin{equation*}
\begin{aligned}
\begin{cases}
-d(x)z_m''(x)-f(x,q_m(x))+\gamma^*f(x,p_m(x))=0~~&x\in\R\!\setminus\!S,\\
z_m(x^-)=z_m(x^+),~z_m'(x^-)=\sigma z_m'(x^+),~~&x=nl,\\
z_m(x^-)=z_m(x^+),~\sigma z_m'(x^-)=z_m'(x^+),~~&x=nl+l_2.
\end{cases}
\end{aligned}
\end{equation*}	
The sequences of continuous functions $(q_m)_{m\in\N}$ and $(p_m)_{m\in\N}$ are bounded in $L^\infty(\R)$, and then in $C^{2,\nu}(\bar I)$ for each $\nu\in(0,1)$ and each patch $I\subset\R\backslash S$ from standard elliptic estimates. Thus, there exist three continuous functions $q_\infty\ge\varep$, $p_\infty\ge\varep$ and $z_\infty=q_\infty-\gamma^*p_\infty\ge0$ such that, up to extraction of some subsequence, $(q_m|_{\bar I},p_m|_{\bar I},z_m|_{\bar I})\to(q_\infty|_{\bar I},p_\infty|_{\bar I},z_\infty|_{\bar I})$ in $C^2(\bar I)$ as $m\to+\infty$ for each patch $I\subset\R\backslash S$. Furthermore,
\begin{equation*}
\begin{aligned}
\begin{cases}
-d(x)z_\infty''(x)-f(x,q_\infty(x))+\gamma^*f(x,p_\infty(x))=0,~~&x\in\R\!\setminus\!S,\\
z_\infty(x^-)=z_\infty(x^+),~z_\infty'(x^-)=\sigma z_\infty'(x^+),~~&x=nl,\\
z_\infty(x^-)=z_\infty(x^+),~\sigma z_\infty'(x^-)=z_\infty'(x^+),~~&x=nl+l_2,
\end{cases}
\end{aligned}
\end{equation*}	
and $z_\infty\ge 0$ in $\R$ with $z_\infty(\bar x_\infty)=0$.
Using the positivity of $p_\infty$ and the same argument as for problem \eqref{3.30} above, one reaches a contradiction. 
	 
Consequently, $\gamma^*\ge 1$, whence $q\ge p$ in $\R$. By interchanging the roles of $q$ and $p$, one also gets $p\ge q$ in~$\R$. The uniqueness is therefore obtained. Furthermore, if $p$ is a positive solution of~\eqref{m-pb-elliptic}, so is the function $x\mapsto p(x+l)$. This implies that $p$ is periodic. The proof of Theorem~\ref{thm-2.4-uniqueness} is thereby complete.
\end{proof}


\section{Long-time behavior: proof of Theorem \ref{thm-long time behavior}}
\label{Sec-thm-long time behavior}

This section is devoted to the proof of Theorem~\ref{thm-long time behavior} on the large-time behavior of the solutions of the evolution problem~\eqref{m-pb}--\eqref{d-f}.

\begin{proof}[Proof of Theorem~$\ref{thm-long time behavior}$]
Assume that $f$ satisfies \eqref{2.5}--\eqref{2.4}. Let $u$ be the unique solution, given in Theorem~\ref{thm-m-wellposedness}, of the Cauchy problem \eqref{m-pb}--\eqref{d-f} with a nonnegative, bounded and continuous  initial datum $u_0\not\equiv 0$. We know from Theorem~\ref{thm-m-wellposedness} that $u$ is continuous in $[0,+\infty)\times\R$ and positive in $(0,+\infty)\times\R$.
	
(i) Assume that $0$ is an unstable solution of~\eqref{m-pb-elliptic}, that is, $\lambda_1<0$, and let $p$ be the unique positive bounded and periodic solution of~\eqref{m-pb-elliptic} given by Theorem~\ref{thm-2.1-existence}~{\rm{(i)}} and Theorem~\ref{thm-2.4-uniqueness}. The function $p$ is continuous in $\R$ and has restriction in $\bar I$ of class $C^2(\bar I)$ for each patch $I\subset\R$. With the notations of Section~\ref{sec42} and from Lemma~\ref{lemma-3.6}, one can fix $R>0$ large enough so that~$\lambda^0_R<\lambda_1/2<0$ and~\eqref{kappaphi}--\eqref{3.28} hold with $y=0$ for all $\kappa>0$ small enough, where~$(\lambda^0_R,\varphi^0_R)$ denotes the unique eigenpair solving~\eqref{3.10} with $y=0$:
\begin{equation*}
\begin{aligned}
\begin{cases}
-d(x)(\varphi^0_R)''(x)-f_s(x,0)\varphi^0_R(x)=\lambda^0_R\varphi^0_R(x),~~~~&x\in(-R,R)\backslash S,\\
\varphi^0_R(x^-)=\varphi^0_R(x^+),~~~(\varphi^0_R)'(x^-)=\sigma(\varphi^0_R)'(x^+),~~&x=nl\in(-R,R),\\
\varphi^0_R(x^-)=\varphi^0_R(x^+),~\sigma(\varphi^0_R)'(x^-)=(\varphi^0_R)'(x^+),~~~~&x=nl+l_2\in(-R,R),\\
\varphi^0_R>0~\text{in}~(-R,R),~\varphi^0_R(\pm R)=0,~\Vert\varphi^0_R\Vert_{L^\infty(-R,R)}=1.
\end{cases}
\end{aligned}
\end{equation*}
From the continuity and positivity of $p$ and $u(1,\cdot)$ in $\R$, one can fix $\kappa>0$ small enough so that~\eqref{kappaphi}--\eqref{3.28} hold with $y=0$, together with $\kappa\varphi^0_R<p$ and $\kappa\varphi^0_R<u(1,\cdot)$ in $[-R,R]$.

Define now a function $v_0$ in $\mathbb{R}$ by
$$v_0(x)=\left\{\baa{ll}
\kappa\varphi^0_R(x) & \hbox{for $x\in[-R,R]$},\vspace{3pt}\\
0 & \hbox{for $x\in\R\!\setminus\![-R,R]$}.\eaa\right.$$
The function $v_0$ is nonnegative, continuous and bounded in $\R$, with $v_0\not\equiv0$ in~$\R$. Let  $v$ be the solution of the Cauchy problem~\eqref{m-pb}--\eqref{d-f} with initial datum $v_0$, given by Theorem~\ref{thm-m-wellposedness}. For each $n\in\N$, let $v^n$ (respectively $u^n$) be the unique bounded classical solution of~\eqref{tp-1}--\eqref{tp-4} with initial condition $v^n(0,\cdot)=\delta^nv_0|_{[-nl,nl]}$ (respectively $u^n(0,\cdot)=\delta^nu_0|_{[-nl,nl]}$), with the cut-off function~$\delta^n$ given in~\eqref{cut-off func}. From Theorem~\ref{thm-m-wellposedness}, the sequence $(v^n)_{n\in\N}$ (respectively~$(u^n)_{n\in\N})$ converges monotonically pointwise in $[0,+\infty)\times\R$ to the function $v$ (respectively $u$). For each $n\in\N$, there holds $0\le v^n(0,\cdot)\le v_0\le p$ in $[-nl,nl]$, hence $0\le v^n(t,x)\le p(x)$ for all $(t,x)\in[0,+\infty)\times[-nl,nl]$ by Proposition~\ref{bdd-cp}, and
$$0\le v(t,x)\le p(x)\ \hbox{ for all $(t,x)\in[0,+\infty)\times\R$}$$
by passing to the limit $n\to+\infty$ (actually, we also know from Theorem~\ref{thm-m-wellposedness} that $v>0$ in $(0,+\infty)\times\R$). Furthermore, for each $n\ge R/l+1$, since $v^n(t,\pm R)\ge0$ for all $t\ge0$ and since
$$v^n(0,\cdot)|_{[-R,R]}=\delta^nv_0|_{[-R,R]}=v_0|_{[-R,R]}=\kappa\varphi^0_R$$
with $\kappa\varphi^0_R$ satisfying~\eqref{kappaphi}--\eqref{3.28}, Proposition~\ref{bdd-cp} again implies that
$$v^n(t,x)\ge\kappa\varphi^0_R(x)\hbox{ for all~$(t,x)\in[0,+\infty)\times[-R,R]$}.$$
Together with the nonnegativity of $v^n$ in $[0,+\infty)\times[-nl,nl]$ and the definition of $v_0$, this yields
$$v^n(h,x)\ge v_0(x)\ge v^n(0,x)\hbox{ for all~$x\in[-nl,nl]$ and $h\ge0$},$$
hence $v^n(t+h,x)\ge v^n(t,x)$ for all~$(t,x)\in[0,+\infty)\times[-nl,nl]$ and $h\ge0$, and finally $v(t+h,x)\ge v(t,x)$ for all $(t,x)\in[0,+\infty)\times\R$ and $h\ge0$. Therefore, the function $v$ is non-decreasing in~$t$. Since it is positive in $(0,+\infty)$ and since $v(t,x)\le p(x)$ for all $(t,x)\in[0,+\infty)\times\R$, the Schauder estimates of Theorem~\ref{thm-m-wellposedness} yield the existence of a positive bounded solution $q$ of~\eqref{m-pb-elliptic} such that~$v(t,\cdot)\to q$ as $t\to+\infty$ locally uniformly in $\R$ and $v(t,\cdot)|_{\bar I}\to q|_{\bar I}$ as $t\to+\infty$ in $C^2(\bar I)$ for each patch $I\subset\R$. It follows from Theorem~\ref{thm-2.4-uniqueness} that $q\equiv p$ in $\R$. In other words,
\be\label{limitv}
v(t,\cdot)\mathop{\longrightarrow}_{t\to+\infty}p\hbox{ locally uniformly in $\R$ and }v(t,\cdot)|_{\bar I}\mathop{\longrightarrow}_{t\to+\infty}p|_{\bar I}\hbox{ in $C^2(\bar I)$ for each patch $I\!\subset\!\R$}.
\ee

On the other hand, since the continuous functions $\kappa\varphi^0_R$ and $u(1,\cdot)$ satisfy $\kappa\varphi^0_R<u(1,\cdot)$ in~$[-R,R]$ and since $u^n(1,\cdot)\to u(1,\cdot)$ locally uniformly in $\R$ as $n\to+\infty$, there is $n_0\ge R/l$ such that $\kappa\varphi^0_R(x)\le u^n(1,x)$ for all $x\in[-R,R]$ and for all $n\ge n_0$. Since $u^n\ge0$ in~$[0,+\infty)\times[-nl,nl]$, it follows that $v^n(0,x)\le v_0(x)\le u^n(1,x)$ for all $x\in[-nl,nl]$ and for all~$n\ge n_0$, hence $v^n(t,x)\le u^n(t+1,x)$ for all $(t,x)\in[0,+\infty)\times[-nl,nl]$ and for all $n\ge n_0$, by Proposition~\ref{bdd-cp}. Therefore,
\be\label{ineqvu}
v(t,x)\le u(t+1,x)\ \hbox{ for all $(t,x)\in[0,+\infty)\times\R$}.
\ee

Lastly, define $M_1:=\max(M,\|u_0\|_{L^\infty(\R)})$ with $M>0$ as in~\eqref{2.5}. As in the proof of Theorem~\ref{thm-2.1-existence}~(i), the solution $w$ (given by Theorem~\ref{thm-m-wellposedness}) of the Cauchy problem \eqref{m-pb}--\eqref{d-f} with initial datum $M_1$, is non-increasing in $t$, periodic in $x$, and converges as $t\to+\infty$ uniformly in~$\R$ to a nonnegative periodic bounded solution $\bar p$ of~\eqref{m-pb-elliptic}. Furthermore,
\be\label{inequw}
0\le u(t,x)\le w(t,x)\ \hbox{ for all $(t,x)\in[0,+\infty)\times\R$}
\ee
by Theorem~\ref{thm-m-wellposedness}. Together with~\eqref{limitv}--\eqref{ineqvu}, one infers that $\bar p\ge p\ (>0)$ in $\R$, and then $\bar p\equiv p$ by Theorem~\ref{thm-2.4-uniqueness}. Since $v(t,x)\le u(t+1,x)\le w(t+1,x)$ for all $(t,x)\in[0,+\infty)\times\R$, one then concludes that $u(t,\cdot)\to p$ as $t\to+\infty$ locally uniformly in $\R$, and together with the Schauder estimates of Theorem~\ref{thm-m-wellposedness}, that $u(t,\cdot)|_{\bar I}\to p|_{\bar I}$ as $t\to+\infty$ in $C^2(\bar I)$ for each patch $I\subset\R$. 

(ii) Let us now assume that $0$ is a stable solution of~\eqref{m-pb-elliptic}, that is, $\lambda_1\ge0$. By defining $w$ and $\bar p$ as in the previous paragraph (the definitions of $w$ and $\bar p$ did not use the stability properties of~$0$), Theorem~\ref{thm-2.1-existence}~{\rm{(ii)}} then yields $\bar p\equiv 0$ in $\R$. Together with~\eqref{inequw} and the uniform convergence $w(t,\cdot)\to\bar p=0$ in $\R$ as $t\to+\infty$, one concludes that $u(t,\cdot)\to0$ as $t\to+\infty$ uniformly in~$\R$. The proof of Theorem~\ref{thm-long time behavior} is thereby complete.
\end{proof}

An immediate corollary of Theorem~\ref{thm-long time behavior}, which will be used in the proofs of Theorems~\ref{thm-spreading result} and~\ref{thm-existence of PTW} in Section~\ref{Sec-thm-spreading properities}, is the following result.

\begin{corollary}
\label{prop-long time-periodic}
Assume that  $f$ satisfies \eqref{2.5}--\eqref{2.4} and that $0$ is an unstable solution of~\eqref{m-pb-elliptic} $($i.e., $\lambda_1<0$$)$. Let $p$ be the unique positive bounded and periodic solution of \eqref{m-pb-elliptic} given by Theorem~$\ref{thm-2.1-existence}$~{\rm{(i)}} and Theorem~$\ref{thm-2.4-uniqueness}$. Let $u$ denote the solution, given by Theorem~$\ref{thm-m-wellposedness}$, of the Cauchy problem~\eqref{m-pb}--\eqref{d-f} with a nonnegative bounded and continuous initial datum $u_0\not\equiv0$. If $u_0$ is periodic, then~$u(t,\cdot)\to p$ as $t\to+\infty$ uniformly in $\R$.
\end{corollary}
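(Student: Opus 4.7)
The plan is to reduce this corollary to a direct application of Theorem~\ref{thm-long time behavior}~(i), exploiting the spatial periodicity propagation statement contained in Theorem~\ref{thm-m-wellposedness}. Since $u_0$ is periodic (i.e., $u_0(x+l) = u_0(x)$ for all $x \in \R$), the well-posedness theorem guarantees that the solution $u$ itself is $l$-periodic in $x$ for every $t \geq 0$. Moreover, the assumption $\lambda_1 < 0$ together with $u_0 \not\equiv 0$ and \eqref{2.5}--\eqref{2.4} place us exactly in the setting of Theorem~\ref{thm-long time behavior}~(i).

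I would then invoke Theorem~\ref{thm-long time behavior}~(i) to get that for each patch $I \subset \R$, $u(t,\cdot)|_{\bar I} \to p|_{\bar I}$ in $C^2(\bar I)$ as $t\to+\infty$, where $p$ is the unique positive bounded periodic steady state. Applying this in particular to the two patches making up a single period, say $\bar I_{20} = [0,l_2]$ and $\bar I_{11} = [l_2,l]$, yields $C^2$-convergence (and hence uniform convergence) on each of these two closed intervals, and thus uniform convergence on the compact interval $[0,l]$:
\begin{equation*}
\sup_{x \in [0,l]} |u(t,x) - p(x)| \longrightarrow 0 \ \text{ as } t\to +\infty.
\end{equation*}

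Finally, I would upgrade this to uniform convergence on all of $\R$ by invoking periodicity: since both $u(t,\cdot)$ and $p$ are $l$-periodic, one has
\begin{equation*}
\sup_{x \in \R}|u(t,x)-p(x)| = \sup_{x\in[0,l]}|u(t,x)-p(x)|,
\end{equation*}
so the uniform convergence on one period immediately gives the uniform convergence on $\R$, completing the proof.

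There is no real obstacle here: the heart of the work was done in Theorem~\ref{thm-long time behavior}~(i), and the only additional input needed is the propagation of spatial $l$-periodicity from the initial datum to the solution, which is part of Theorem~\ref{thm-m-wellposedness}. The only point to keep in mind is that patch-wise $C^2$ convergence provided by Theorem~\ref{thm-long time behavior}~(i) is enough to conclude uniform convergence on a fundamental period, because a period is covered by finitely many (actually two) closed patches, and on each of them $C^2$ convergence implies $C^0$ convergence.
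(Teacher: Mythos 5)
Your proof is correct and follows essentially the same route as the paper's: invoke the periodicity-propagation statement from Theorem~\ref{thm-m-wellposedness}, then use the locally uniform convergence from Theorem~\ref{thm-long time behavior}~(i), and upgrade to global uniform convergence by periodicity. The paper phrases this in two lines; you simply unpack the same argument in more detail.
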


\begin{proof}
We already know from Theorem~\ref{thm-m-wellposedness} that, for each $t\ge0$, the function $x\mapsto u(t,x)$ is periodic. Since $u(t,\cdot)\to p$ as $t\to+\infty$ locally uniformly in $\R$ by Theorem~\ref{thm-long time behavior}, the conclusion follows.
\end{proof}


\section{Spreading speeds and periodic traveling waves: proofs of Theo\-rems \ref{thm-spreading result} and  \ref{thm-existence of PTW}}\label{Sec-thm-spreading properities}

This section is devoted to the study of the spatial dynamics of the problem~\eqref{m-pb}--\eqref{d-f}. We will prove the existence of an asymptotic spreading speed $c^*$, which  can be given explicitly by a variational formula using principal eigenvalues of certain linear operators. Moreover, the spreading speed coincides with the minimal  speed for pulsating traveling waves. The main approach is based on the abstract dynamical systems  theory for monostable evolution systems established in the seminal work in \cite{W2002} and further developed in  \cite{LZ2007, LZ2010}.

Hereafter we assume that the $0$ solution of \eqref{m-pb-elliptic} is unstable (i.e., $\lambda_1<0$) and that  $f$ satisfies  \eqref{2.5}--\eqref{2.4}. By Theorem \ref{thm-2.1-existence} (i) and Theorem \ref{thm-2.4-uniqueness}, there exists a unique positive bounded periodic solution $p$ of~\eqref{m-pb-elliptic}.  We point out that, with these hypotheses, populations starting with any bounded nonnegative and non-trivial initial condition always persist, by Theorem~\ref{thm-long time behavior}. Using the notations of~\cite[Section 5]{LZ2010}, we define $\mathcal{H}=\mathbb{R}$, $\widetilde{\mathcal{H}}=l\mathbb{Z}$, $X=Y=\mathbb{R}$, $\beta=p$ and $\mathcal{M}=\mathcal{C}_p$, with $\mathcal{C}_p$ given in~\eqref{defCp}. We also define a family of maps $\{Q_t\}_{t\ge 0}$ in $\mathcal{C}_p$ by 
\begin{align}\label{defQt}
Q_t(\omega)(x)=u(t,x;\omega)~~\text{for $\omega\in \mathcal{C}_p$, $t\ge0$, and $x\in\mathbb{R}$},
\end{align}
where $(t,x)\mapsto u(t,x;\omega)$ denotes the unique classical solution to the Cauchy problem \eqref{m-pb}--\eqref{d-f} with initial condition $u(0,\cdot;\omega)=\omega\in\mathcal{C}_p$, given by Theorem~\ref{thm-m-wellposedness}. In particular, $Q_0(\omega)=\omega$ for every $\omega\in\mathcal{C}_p$, and $Q_t(0)=0$ for every $t\ge0$. Furthermore, since the continuous positive function~$p$ solves~\eqref{m-pb-elliptic}, it follows from the uniqueness in Theorem~\ref{thm-m-wellposedness} that $u(t,\cdot;p)=p$ for each $t\ge0$, that is, $Q_t(p)=p$. The monotonicity in Theorem~\ref{thm-m-wellposedness} then implies that $u(t,\cdot;\omega)\in\mathcal{C}_p$ for every $\omega\in\mathcal{C}_p$ and $t\ge0$. In other words, for every $t\ge0$, $Q_t$ maps $\mathcal{C}_p$ into itself.

We recall that a family of maps $\{Q_t\}_{t\ge 0}$ from $\mathcal{C}_p$ into itself is said to be a semiflow in $\mathcal{C}_p$ if it satisfies the following properties:
\begin{enumerate}[(1)]
\item $Q_0(\omega)=\omega$ for all $\omega\in\mathcal{C}_p$;
\item $Q_{t_1}(Q_{t_2}(\omega))=Q_{t_1+t_2}(\omega)$ for all $t_1,t_2\ge 0$ and for all $\omega\in\mathcal{C}_p$;
\item the map $(t,\omega)\mapsto Q_t(\omega)$ is continuous from $[0,+\infty)\times \mathcal{C}_p$ into $\mathcal{C}_p$, with $\mathcal{C}_p$ equipped with the compact open topology, that is, $Q_{t_m}(\omega_m)\to Q_t(\omega)$ as $m\to+\infty$ locally uniformly in $\R$ if $t_m\to t$ and $\omega_m\to\omega$ locally uniformly in $\R$ as $m\to+\infty$, with $(t_m,\omega_m)\in[0,+\infty)\times\mathcal{C}_p$.
\end{enumerate}
We also say that $\{Q_t\}_{t\ge 0}$ is monotone in $\mathcal{C}_p$ if, for every $t\ge0$, $Q_t(\omega)\ge Q_t(\omega')$ in $\R$ provided $\omega\ge\omega'$ in $\R$, with $\omega,\omega'\in\mathcal{C}_p$. Lastly, $\{
Q_t\}_{t\ge0}$ is called subhomogeneous if $\gamma Q_t(\omega)\le Q_t(\gamma\omega)$ in~$\R$ for every $t\ge0$, $\gamma\in[0,1]$, and $\omega\in\mathcal{C}_p$.

The following proposition summarizes the properties of the family $\{Q_t\}_{t\ge 0}$ defined in~\eqref{defQt}.

\begin{proposition}
\label{prop-semiflow}
The family $\{Q_t\}_{t\ge 0}$ defined in~\eqref{defQt} is a monotone and subhomogeneous semiflow in $\mathcal{C}_p$. Furthermore, for every $\omega\in\mathcal{C}_p$, $a\in l\Z$, $t\ge0$ and $x\in\R$, there holds $Q_t(\omega(\cdot+a))(x)=Q_t(\omega)(x+a)$.
\end{proposition}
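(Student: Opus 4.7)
The plan is to verify each of the three defining properties of a monotone subhomogeneous semiflow, together with the translation identity, by leveraging Theorem~\ref{thm-m-wellposedness} and its comparison principle.

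First, $Q_0(\omega)=\omega$ is immediate from the definition, and the cocycle identity $Q_{t_1+t_2}(\omega)=Q_{t_1}(Q_{t_2}(\omega))$ follows from uniqueness in $\mathcal{C}_p$: the map $(s,x)\mapsto u(s+t_2,x;\omega)$ solves~\eqref{m-pb}--\eqref{d-f} with initial datum $Q_{t_2}(\omega)\in\mathcal{C}_p$, and uniqueness of bounded classical solutions is a consequence of the comparison statement of Theorem~\ref{thm-m-wellposedness} applied in both directions. For joint continuity of $(t,\omega)\mapsto Q_t(\omega)$ in the compact-open topology on $\mathcal{C}_p$, I would argue by compactness: since $0\le\omega\le p$ for every $\omega\in\mathcal{C}_p$, the solutions are uniformly bounded by $\|p\|_{L^\infty(\R)}$; the universal Schauder estimate of Theorem~\ref{thm-m-wellposedness} then gives precompactness in $C^{1;2}_{t;x}(K)$ for every compact set $K\subset(0,+\infty)\times\bar I$ and every patch $I$. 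Any subsequential limit is identified as a classical solution of~\eqref{m-pb}--\eqref{d-f}; continuity up to $t=0$ is recovered by a localized super/subsolution argument identical in spirit to the one used at the end of Section~\ref{Sec-thm-wellposedness}, and uniqueness then upgrades subsequential convergence to full convergence.

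Monotonicity is a direct consequence of the comparison statement in Theorem~\ref{thm-m-wellposedness}. Subhomogeneity is the step that uses the Fisher--KPP assumption~\eqref{2.4}. For $\gamma\in[0,1]$ and $\omega\in\mathcal{C}_p$, I would set $v(t,x):=\gamma\,u(t,x;\omega)$; since the interface conditions in~\eqref{m-pb} are linear in $v$, they are preserved under scaling, and on every patch
\begin{equation*}
v_t-d(x)v_{xx}=\gamma f(x,u)\le f(x,\gamma u)=f(x,v),
\end{equation*}
the inequality $\gamma f(x,u)\le f(x,\gamma u)$ following from $s\mapsto f(x,s)/s$ being non-increasing on $s>0$ together with $0\le\gamma u\le u$ (it holds trivially where $u=0$). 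Hence $v$ is a classical subsolution with initial datum $\gamma\omega\in\mathcal{C}_p$, and the comparison principle of Proposition~\ref{bdd-cp} (applied on truncations and passed to the limit exactly as in Section~\ref{Sec-thm-wellposedness}) yields $\gamma Q_t(\omega)=v\le u(\cdot,\cdot;\gamma\omega)=Q_t(\gamma\omega)$. The translation identity is once more a consequence of uniqueness: for $a\in l\Z$, the period-$l$ coefficients $d$, $f(\cdot,s)$ and the interface set $S$ are all invariant under $x\mapsto x+a$, so $(t,x)\mapsto u(t,x+a;\omega)$ solves the same Cauchy problem as $u(t,x;\omega(\cdot+a))$, which lies in $\mathcal{C}_p$ by periodicity of $p$, forcing the two to coincide.

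The main obstacle is the joint continuity of $Q_t$ in the compact-open topology: Theorem~\ref{thm-m-wellposedness} only guarantees continuous dependence in the uniform norm on all of $\R$, so bridging to merely locally uniform convergence of initial data requires the combined compactness-plus-uniqueness argument sketched above, including a localized repetition of the initial-trace construction from Section~\ref{Sec-thm-wellposedness}. All other properties reduce to essentially one-line applications of Theorem~\ref{thm-m-wellposedness} and the KPP hypothesis~\eqref{2.4}.
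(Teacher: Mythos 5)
Your proposals for $Q_0(\omega)=\omega$, monotonicity, and subhomogeneity are essentially the paper's arguments and are fine. The trouble is in the cocycle identity, the joint continuity, and the translation identity: in all three places you reduce matters to ``uniqueness of bounded classical solutions is a consequence of the comparison statement of Theorem~\ref{thm-m-wellposedness},'' and this is not available. The comparison statement in Theorem~\ref{thm-m-wellposedness} is a statement about the \emph{constructed} solutions (the monotone limits of the truncated problems) and says nothing about an arbitrary bounded classical solution of \eqref{m-pb}--\eqref{d-f} that happens to share the same initial datum. The only tool in the paper for comparing two a priori distinct solutions on $\R$ with countably many interfaces is Proposition~\ref{cp}, whose hypothesis requires uniform $C^{1,\theta;2,\theta}_{t;x}([0,T_0]\times\bar I)$ bounds \emph{down to $t=0$} for both functions on every patch; the constructed solution with a merely continuous initial datum in $\mathcal{C}_p$ is only shown to have such bounds on $[\tau,+\infty)$ for $\tau>0$. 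So, for instance, when you argue that $(s,x)\mapsto u(s+t_2,x;\omega)$ must coincide with $u(s,x;Q_{t_2}(\omega))$, you have Schauder regularity up to $s=0$ for the time-shifted function, but not for the constructed solution with initial datum $Q_{t_2}(\omega)$, and the cited comparison result does not close the loop. The same issue appears verbatim in your translation-identity step, where ``forcing the two to coincide'' again invokes unproved uniqueness.

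The paper sidesteps this. For the cocycle identity it compares the truncated solutions $u^n(t_2+\cdot,\cdot;\omega)$ and $u^m(\cdot,\cdot;\eta\,\omega_2)$ for $\eta<1$ via Proposition~\ref{bdd-cp} on bounded intervals (where no Schauder hypotheses are needed), obtains a two-sided squeeze, and then lets $\eta\to1^-$ using the Lipschitz estimate~\eqref{Lipuv} to pass to the limit. The translation identity is proved the same way, comparing $u^n(\cdot,\cdot;\omega(\cdot+a))$ with $u^{n+|a|/l+1}(\cdot,\cdot+a;\omega)$ and exploiting the periodicity of $d$, $f$, $S$. For the joint continuity, the missing piece in your sketch is precisely the smoothing step that the paper inserts: it replaces $\omega$ by some $\bar\omega\in C^3(\R)$ with $\bar\omega'=0$ on $S$ and $\|\bar\omega-\omega\|_\infty\le\varep/2$, and replaces $\omega_m$ by $\bar\omega_m:=\min(\max(\bar\omega,\omega_m-\varep),\omega_m+\varep)$, so that $\bar\omega_m$ coincides with $\bar\omega$ on any compact for $m$ large and is globally within $\varep$ of $\omega_m$. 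The $C^3$ regularity with vanishing derivatives at $S$ is exactly what delivers the Schauder estimates \emph{up to $t=0$} that make Proposition~\ref{cp} applicable to identify the subsequential limit with $u(\cdot,\cdot;\bar\omega)$; the Lipschitz estimate~\eqref{Lipuv} then transfers the result from $\bar\omega$, $\bar\omega_m$ back to $\omega$, $\omega_m$. Without this smoothing your ``compactness-plus-uniqueness'' step has nothing to apply uniqueness to, so the argument does not close as written.
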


\begin{proof}
First of all, the property $Q_0(\omega)=\omega$ is already known by definition, and the monotonicity of $\{Q_t\}_{t\ge 0}$ follows from Theorem~\ref{thm-m-wellposedness}. Secondly, for $t_1,t_2\ge0$ and $\omega\in\mathcal{C}_p$, the function $(t,x)\mapsto u(t+t_2,x;\omega)$ is a nonnegative bounded classical solution of~\eqref{m-pb}--\eqref{d-f} in $[0,+\infty)\times\R$, with initial condition $u(t_2,\cdot;\omega)=Q_{t_2}(\omega)$.  Owing to the uniqueness in Theorem~\ref{thm-m-wellposedness}, one infers immediately that $u(t_1+t_2,\cdot;\omega)=Q_{t_1}(Q_{t_2}(\omega))$ in $\R$, that is, $Q_{t_1+t_2}(\omega)=Q_{t_1}(Q_{t_2}(\omega))$ in $\R$.

To show the continuity property, consider any $(t,\omega)\in[0,+\infty)\times\mathcal{C}_p$ and any sequence $(t_m,\omega_m)_{m\in\N}$ in $[0,+\infty)\times\mathcal{C}_p$ such that $t_m\to t$ and $\omega_m\to\omega$ locally uniformly in $\R$ as $m\to+\infty$. One has to show that $u(t_m,\cdot;\omega_m)\to u(t,\cdot;\omega)$ locally uniformly in $\R$ as $m\to+\infty$. Let $T\in(0,+\infty)$ be such that $0\le t\le T$ and $0\le t_m\le T$ for all $m\in\N$. Take any $A>0$, and let $\epsilon>0$ be arbitrary. There is a function $\bar\omega\in\mathcal{C}_p\cap C^3(\R)$ such that $\|\bar\omega\|_{C^3(\R)}<+\infty$, $\bar\omega'=0$ at all points of~$S$, and $\|\bar\omega-\omega\|_{L^\infty(\R)}\le\varep/2$. For $m\in\N$, define $\bar\omega_m:\R\to\R$ by
$$\bar\omega_m=\min\big(\max(\bar\omega,\omega_m-\varep),\omega_m+\varep\big),$$
that is, $\bar\omega_m(x)=\bar\omega(x)$ if $\omega_m(x)-\varep\le\bar\omega(x)\le\omega_m(x)+\varep$, $\bar\omega_m(x)=\omega_m(x)+\varep$ if $\bar\omega(x)>\omega_m(x)+\varep$, and~$\bar\omega_m(x)=\omega_m(x)-\varep$ if $\bar\omega(x)<\omega_m(x)-\varep$. Each function $\bar\omega_m$ belongs to~$\mathcal{C}_p$, and~$\|\bar\omega_m-\omega_m\|_{L^\infty(\R)}\le\varep$. Furthermore, since $\|\bar\omega-\omega\|_{L^\infty(\R)}\le\varep/2$ and $\omega_m\to\omega$ as $m\to+\infty$ locally uniformly in $\R$, one infers that, for each compact set $\mathcal{K}\subset\R$, one has
\be\label{omegam}
\bar\omega_m|_{\mathcal{K}}=\bar\omega|_{\mathcal{K}}\ \hbox{ for all $m$ large enough}.
\ee
As $\|\bar\omega\|_{C^3(\R)}$ is finite and $0\le u(t,x;\omega_m)\le p(x)$ for all $(t,x)\in[0,+\infty)\times\R$ and $m\in\N$, standard parabolic estimates yield the existence of a positive constant $C_1$ such that, for each patch $I=(a,b)\subset\R$, the functions $t\mapsto u(t,(a+b)/2;\bar\omega_m)$ belong to $C^{1,1/4}([0,+\infty))$ for all~$m$ large enough, and~$\|u(\cdot,(a+b)/2;\bar\omega_m)\|_{C^{1,1/4}([0,+\infty))}\le C_1$. As in the proof of Theorem~\ref{thm-m-wellposedness}, and using here the fact that $\bar\omega$ is bounded in~$C^3(\R)$ and $\bar\omega'$ vanishes on~$S$, it then follows that there are $\theta\in(0,1)$ and a positive constant $C_2$ such that, for each patch~$I\subset\R$ and from the above estimates applied at the middle points of the leftward and rightward adjacent patches, the functions $u(\cdot,\cdot;\bar\omega_m)|_{[0,+\infty)\times\bar I}$ belong to $C^{1,\theta;2,\theta}_{t;x}([0,+\infty)\times\bar I)$ and
$$\|u(\cdot,\cdot;\bar\omega_m)|_{[0,+\infty)\times\bar I}\|_{C^{1,\theta;2,\theta}_{t;x}([0,+\infty)\times\bar I)}\le C_2$$
for all $m$ large enough. Up to extraction of a subsequence, the continuous functions $u(\cdot,\cdot;\bar\omega_m)$ converge as $m\to+\infty$ locally uniformly in $[0,+\infty)\times\R$ to a nonnegative bounded classical solution~$\bar U$ of~\eqref{m-pb}--\eqref{d-f}. Notice also that $\bar U(0,\cdot)=\bar\omega$ from the above limits and~\eqref{omegam}. The uniqueness in Theorem~\ref{thm-m-wellposedness} then implies that $\bar U\equiv u(\cdot,\cdot;\bar\omega)$ in $[0,+\infty)\times\R$. Therefore, since the limit of any subsequence of  $(u(\cdot,\cdot;\bar\omega_m))_{m\in\N}$ is unique, one gets that the whole sequence~$(u(\cdot,\cdot;\bar\omega_m))_{m\in\N}$ converges locally uniformly in $[0,+\infty)\times\R$ to $u(\cdot,\cdot;\bar\omega)$. In particular, there is~$m_0\in\N$ such that
$$\max_{[0,T]\times[-A,A]}|u(\cdot,\cdot;\bar\omega_m)-u(\cdot,\cdot;\bar\omega)|\le\varep\ \hbox{ for all $m\ge m_0$}.$$
Finally, since $\|\bar\omega\!-\!\omega\|_{L^\infty(\R)}\!\le\!\varep/2$ and $\|\bar\omega_m\!-\!\omega_m\|_{L^\infty(\R)}\!\le\!\varep$ for each $m$, formula~\eqref{Lipuv} of the proof of Theorem~\ref{thm-m-wellposedness} yields $\|u(\cdot,\cdot;\bar\omega)\!-\!u(\cdot,\cdot;\omega)\|_{L^\infty([0,T]\times\R)}\!\le\!\varep e^{LT}$ and $\|u(\cdot,\cdot;\bar\omega_m)\!-\!u(\cdot,\cdot;\omega_m)\|_{L^\infty([0,T]\times\R)}\!\le\!2\varep e^{LT}$, with $L:=\max(\|f'_1\|_{L^\infty([0,\bar K])},\|f'_2\|_{L^\infty([0,\bar K])})$ and $\bar K:=\max(K_1,K_2,\|p\|_{L^\infty(\R)})$. One infers that
$$\max_{[0,T]\times[-A,A]}|u(\cdot,\cdot;\omega_m)-u(\cdot,\cdot;\omega)|\le\varep\Big(1+3e^{LT}\Big)$$
for all $m\ge m_0$. Since $\varep>0$ was arbitrary and $u(\cdot,\cdot;\omega)$ is continuous in $[0,+\infty)\times\R$, this shows that $u(t_m,\cdot;\omega_m)\to u(t,\cdot;\omega)$ uniformly in $[-A,A]$ as $m\to+\infty$, leading to the desired result.

Let us now show that the family $\{Q_t\}_{t\ge0}$ is subhomogeneous. So, let us consider any $\gamma\in[0,1]$ and $\omega\in\mathcal{C}_p$, and let us show that $\gamma Q_t(\omega)\le Q_t(\gamma\omega)$ in $\R$ for all $t\ge0$. From~\eqref{2.4}, it follows that $\gamma f(x,u(t,x;\omega))\le f(x,\gamma u(t,x;\omega))$ for all $(t,x)\in[0,+\infty)\times(\R\setminus S)$, hence $\gamma u(\cdot,\cdot;\omega)$ is a bounded subsolution of the problem satisfied by the bounded solution $u(\cdot,\cdot;\gamma\omega)$ in $[0,+\infty)\times\R$ (with the same initial condition $\gamma\omega$). Proposition~\ref{cp} then implies that $\gamma u(t,x;\omega)\le u(t,x;\gamma\omega)$ for all $(t,x)\in[0,+\infty)\times\R$, that is, $\gamma Q_t(\omega)\le Q_t(\gamma\omega)$ in $\R$ for all $t\ge0$.

Finally, consider any $\omega\in\mathcal{C}_p$ and $a\in l\Z$. Set $\omega_a=\omega(\cdot+a)$. The function $u(\cdot,\cdot+a;\omega)$ is still a nonnegative bounded classical solution of~\eqref{m-pb}--\eqref{d-f}, due to the periodicity of $d(x)$ and~$f(x,s)$ with respect to $x$. Since $u(0,\cdot+a;\omega)=\omega(\cdot+a)=\omega_a$, the uniqueness in Theorem~\ref{thm-m-wellposedness} implies that $u(t,x+a;\omega)=u(t,x;\omega_a)$ for all~$(t,x)\in[0,+\infty)\times\R$. This completes the proof of Proposition~\ref{prop-semiflow}.
\end{proof}

As a consequence of Theorem~\ref{thm-m-wellposedness} and Proposition~\ref{prop-semiflow}, we conclude that the solution maps $Q_t: \mathcal{C}_p\to \mathcal{C}_p$ satisfy the following properties:\begin{enumerate}
\item[(E1)] for each $t\ge0$, $Q_t$ is periodic, that is, $Q_t(T_a(\omega))=T_a(Q_t(\omega))$ for all $a\in l\mathbb{Z}$ and $\omega\in\mathcal{C}_p$, where $T_y$ is the translation operator defined by $T_y(\tilde\omega)=\tilde\omega(\cdot+y)$ for $\tilde\omega\in\mathcal{C}_p$ and $y\in l\Z$;
\item [(E2)] the set $\{Q_t(\mathcal{C}_p):t\ge0\}\subset \mathcal{C}_p$ is uniformly bounded and, for each $t\ge0$, $Q_t: \mathcal{C}_p\to \mathcal{C}_p$ is continuous; 
\item [(E3)] for each $t>0$, the map $Q_t: \mathcal{C}_p\to \mathcal{C}$ is compact with respect to the compact open topology (as a consequence of the regularity estimates of Theorem \ref{thm-m-wellposedness});
\item [(E4)] for each $t\ge0$, $Q_t$ is order-preserving (i.e., monotone);
\item [(E5)] for each $t>0$, $Q_t$ admits exactly the two periodic fixed points $0$ and $p$ in $\mathcal{C}_p$: indeed, on the one hand, one knows that $0$ and $p$ are two fixed points; on the other hand, for each $\omega\in\mathcal{C}_p\!\setminus\!\{0,p\}$, one has $Q_{mt}(\omega)\to p$ locally uniformly in $\R$ as $m\to+\infty$ by Theorem~\ref{thm-long time behavior} (and even uniformly in $\R$ if $\omega$ is periodic, by Corollary~\ref{prop-long time-periodic}), hence $\omega$ can not be a fixed point of $Q_t$.
\end{enumerate}

It then follows from \cite[Theorem 5.1]{LZ2010} that the time-$1$ map $Q_1:\mathcal{C}_p\to \mathcal{C}_p$ admits rightward and leftward asymptotic spreading speeds $c^*_+$ and $c^*_-$, in the sense that, 1) if $\omega\in\mathcal{C}_p$ is compactly supported, then
\be\label{spread1}\left\{\baa{ll}
\displaystyle\|Q_n(\omega)\|_{L^\infty([nc,+\infty))}\mathop{\longrightarrow}_{n\to+\infty}0 & \hbox{for each $c>c^*_+$},\vspace{3pt}\\
\displaystyle\|Q_n(\omega)\|_{L^\infty((-\infty,-nc'])}\mathop{\longrightarrow}_{n\to+\infty}0 & \hbox{for each $c'>c^*_-$},\eaa\right.
\ee
and, 2) if $c^*_++c^*_->0$, then, for any $\delta>0$, there is $r_\delta>0$ such that
\be\label{spread2}
\|Q_n(\omega)-p\|_{L^\infty([-nc',nc])}\mathop{\longrightarrow}_{n\to+\infty}0\hbox{ for each $c<c^*_+$ and $c'<c^*_-$ with $c+c'\ge0$},
\ee
and for each $\omega\in\mathcal{C}_p$ with $\omega\ge\delta$ on an interval of length $2r_\delta$. In the following, our goal is to give computational formulas for $c^*_{\pm}$ via the linear operators approach of \cite{W2002,LZ2007}, from which we eventually deduce that $c^*_+=c^*_-$. 

Thus, in order to compute $c^*_\pm$, we consider the linearized problem of \eqref{m-pb}--\eqref{d-f} at its zero solution:
\begin{align}
\label{7.4}
\begin{cases}
U_t=d(x)U_{xx}+f_s(x,0)U,~~&t>0,\ x\in\mathbb{R}\backslash S,\\
U(t,x^-)=U(t,x^+),~U_x(t,x^-)=\sigma U_x(t,x^+),~~&t> 0,\ x=nl,\\
U(t,x^-)=U(t,x^+),~\sigma U_x(t,x^-)=U_x(t,x^+),~~&t> 0,\ x=nl+l_2,\\
U(0,\cdot)=\omega\ge0,\ \omega\in \mathcal{C}.
\end{cases}
\end{align}
Let $\{\mathbb{L}_t\}_{t\ge 0}$ be the linear solution maps generated by~\eqref{7.4}, namely, $\mathbb{L}_t(\omega)=U(t,\cdot;\omega)$ where the function $(t,x)\mapsto U(t,x;\omega)$ is the solution of~\eqref{7.4} given by the same truncation and limit process as in the proof of Theorem~\ref{thm-m-wellposedness} (this solution satisfies the same properties as the solution of the nonlinear problem~\eqref{m-pb}--\eqref{d-f} given in Theorem~\ref{thm-m-wellposedness}, with the exception of the global boundedness: the solutions of~\eqref{7.4} are now bounded only locally with respect to $t\in[0,+\infty)$ in general). For any given $\mu\in\R$, substituting $U(t,x;\omega)=e^{-\mu x}v(t,x)$ in~\eqref{7.4} yields
\begin{align}
\label{7.5}
\begin{cases}
v_t=d(x)v_{xx}-2d(x)\mu v_x+(d(x)\mu^2+f_s(x,0))v,~~&t>0,\ x\in\mathbb{R}\backslash S,\\
v(t,x^-)=v(t,x^+),~~~[-\mu v+v_x](t,x^-)=\sigma[-\mu v+v_x](t,x^+),~~&t> 0,\ x=nl,\\
v(t,x^-)=v(t,x^+),~\,\sigma[-\mu v+ v_x](t,x^-)=[-\mu v+v_x](t,x^+),~~&t> 0,\ x=nl+l_2,\\
v(0,x)=\omega(x) e^{\mu x},~~~&x\in\mathbb{R}.
\end{cases}
\end{align}
Let $\{L_{\mu,t}\}_{t\ge 0}$ be the linear solution maps generated by \eqref{7.5} and obtained from the substitution $v(t,x)=e^{\mu x}U(t,x;\omega)$, that is, for any $\omega\in\mathcal{C}$ with $\omega\ge0$ in $\R$,
\begin{align}
\label{777}
\mathbb{L}_t\big(y\mapsto e^{-\mu y}\omega(y)\big)(x)=e^{-\mu x}L_{\mu,t}(\omega)(x), ~~\text{for}~t\ge 0\hbox{ and }x\in\mathbb{R}.
\end{align}
Substituting $v(t,x)=e^{-\lambda t}\psi(x)$ into \eqref{7.5}, with $\psi$ periodic and positive, leads to the following periodic eigenvalue problem:
\begin{equation}
\label{7.6}
\begin{aligned}
\begin{cases}
\mathcal{L}_\mu\psi(x) :=-d(x)\psi''(x)\!+\!2d(x)\mu\psi'(x)\!-\!(d(x)\mu^2\!+\!f_s(x,0))\psi(x)= \lambda\psi(x), &x\in\mathbb{R}\backslash S,\\
\psi(x^-)=\psi(x^+),~~~[-\mu \psi+\psi'](x^-)=\sigma[-\mu \psi+\psi'](x^+), &x=nl,\\
\psi(x^-)=\psi(x^+),~\,\sigma[-\mu \psi+ \psi'](x^-)=[-\mu \psi+\psi'](x^+), & x=nl+l_2,\\
\psi~\text{is periodic in}~\mathbb{R}, ~\psi>0,~ \Vert \psi\Vert_{L^\infty(\mathbb{R})}=1.
\end{cases}
\end{aligned}
\end{equation}

\begin{lemma}
\label{lemma-concave}
For each $\mu\in\R$, the eigenvalue  problem \eqref{7.6} has a simple principal eigenvalue $\lambda=\lambda(\mu)$ corresponding to a unique positive continuous periodic principal eigenfunction $\psi$, which is such that $\psi|_{\bar I}\in C^\infty(\bar I)$ for each patch $I$ in $\mathbb{R}$. Moreover, there is a max-inf characterization of $\lambda(\mu)$:
\begin{align}\label{5.16}
\lambda(\mu)=\max_{\psi\in E_\mu}\inf_{x\in\mathbb{R}\setminus S}\frac{\mathcal{L}_\mu\psi(x)}{\psi(x)},
\end{align}
where
$$\baa{rcl}
E_\mu & = & \big\{\psi\in\mathbb{P}: \psi|_{\bar I}\in C^2(\bar I)\hbox{ for each patch }I\subset\R,\,\psi>0\hbox{ in }\R,\vspace{3pt}\\
& & \ \ \psi~\text{satisfies the interface conditions in}~\eqref{7.6}\big\}\eaa$$
$($we recall that $\mathbb{P}$ is the set of all continuous and periodic functions from $\R$ to $\R$$)$.\footnote{In~\eqref{5.16}, even if the test functions $\psi$ are positive, continuous in $\R$, and have restrictions to $\bar I$ of class $C^2(\bar I)$ for each patch $I\subset\R$, the infimum of $\mathcal{L}_\mu\psi(x)/\psi(x)$ is taken over the open set $\R\setminus S$ and therefore is not a minimum in general. Notice that the quantity $\mathcal{L}_\mu\psi(x)/\psi(x)$ is in general not defined when $x\in S$, even if the limits at $x^\pm$ exist (but are different in general).} Lastly, the function $\mu\mapsto \lambda(\mu)$ is concave in $\R$, and $\lambda(0)=\lambda_1$, where $\lambda_1<0$ is the principal eigenvalue of the problem~\eqref{ep-0}.
\end{lemma}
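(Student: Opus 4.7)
The plan is to establish the four claims (existence and simplicity of the principal eigenpair, the max-inf formula, concavity of $\mu\mapsto\lambda(\mu)$, and the identification $\lambda(0)=\lambda_1$) in sequence. For the existence and uniqueness of the principal eigenpair, I would work on one period (say $[-l_1,l_2]$) with periodic endpoint conditions together with the interface condition at~$x=0$, mimicking the Krein-Rutman argument employed for problem~\eqref{3.10} in Section~\ref{sec42}. Namely, for $\Lambda>0$ sufficiently large the shifted operator $\mathcal{L}_\mu+\Lambda$ is invertible on a weighted $H^1_{\mathrm{per}}$-type space (the weight $1/k$ on the type-2 patches accounts for the asymmetry factor $\sigma$ in the interface conditions, as in the inner product of Section~\ref{sec-2.1}); its inverse is compact by Sobolev embedding and strongly positive by finitely many applications of the strong maximum principle and the Hopf lemma; and the Krein-Rutman theorem yields a unique simple principal eigenvalue $\lambda(\mu)$ with a unique (up to scaling) positive periodic eigenfunction~$\psi$. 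Interior elliptic regularity then upgrades~$\psi$ to be of class $C^\infty$ on each closed patch.

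For the max-inf characterization~\eqref{5.16}, the supremum is attained at the principal eigenfunction itself, for which $\mathcal{L}_\mu\psi/\psi\equiv\lambda(\mu)$. Conversely, given any $\psi\in E_\mu$ with $c:=\inf_{\R\setminus S}\mathcal{L}_\mu\psi/\psi$, the function~$\psi$ is a positive supersolution of $\mathcal{L}_\mu\psi\ge c\psi$, and a sliding-scale comparison with the principal eigenfunction, carried out patch by patch via the strong maximum principle, the Hopf lemma, and the interface conditions at points of $S$ exactly as in Lemma~\ref{lemma-3.4}, yields $c\le\lambda(\mu)$.

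For concavity, the key construction is the log-convex combination $\psi:=\psi_1^t\psi_2^{1-t}$, where $\psi_1,\psi_2$ are the principal eigenfunctions at $\mu_1,\mu_2$ and $\mu:=t\mu_1+(1-t)\mu_2$. That $\psi\in E_\mu$ follows by passing to the substituted variables $\tilde\psi_i:=e^{-\mu_i x}\psi_i$, for which the interface conditions reduce to the symmetric form $\tilde\psi_i'(x^-)=\sigma\tilde\psi_i'(x^+)$ on $S_1$ and $\sigma\tilde\psi_i'(x^-)=\tilde\psi_i'(x^+)$ on $S_2$. Since $\tilde\psi=\tilde\psi_1^t\tilde\psi_2^{1-t}$, the logarithmic derivative $\tilde\psi'/\tilde\psi=t\tilde\psi_1'/\tilde\psi_1+(1-t)\tilde\psi_2'/\tilde\psi_2$ inherits these symmetric interface relations (using continuity of $\tilde\psi_i$ across interfaces), which translate back to the required interface conditions for $\psi$ with parameter~$\mu$. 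With $a:=\psi_1'/\psi_1$, $b:=\psi_2'/\psi_2$, and the identity $\psi''/\psi=t\psi_1''/\psi_1+(1-t)\psi_2''/\psi_2-t(1-t)(a-b)^2$, a direct algebraic manipulation yields
\begin{equation*}
\frac{\mathcal{L}_\mu\psi}{\psi}-t\,\frac{\mathcal{L}_{\mu_1}\psi_1}{\psi_1}-(1-t)\,\frac{\mathcal{L}_{\mu_2}\psi_2}{\psi_2}=d(x)\,t(1-t)\bigl[(a-b)-(\mu_1-\mu_2)\bigr]^2\ge 0
\end{equation*}
on $\R\setminus S$. Since the last two ratios equal $\lambda(\mu_1)$ and $\lambda(\mu_2)$, the max-inf formula gives $\lambda(\mu)\ge\inf_{\R\setminus S}\mathcal{L}_\mu\psi/\psi\ge t\lambda(\mu_1)+(1-t)\lambda(\mu_2)$. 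Finally, the identity $\lambda(0)=\lambda_1$ is immediate since setting $\mu=0$ in~\eqref{7.6} recovers exactly the problem~\eqref{ep-0}, so uniqueness of the principal eigenpair forces the two eigenvalues to coincide.

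The main obstacle I anticipate is setting up the Krein-Rutman framework carefully enough to accommodate both the discontinuous flux conditions with factor~$\sigma$ and the non-self-adjoint drift term $2d(x)\mu\partial_x$ (which breaks the symmetry argument used in Section~\ref{sec42} for $\mu=0$), while ensuring that the spectral radius of the resolvent corresponds to a single simple positive eigenfunction. A second, more technical point is the verification that the log-convex combination $\psi_1^t\psi_2^{1-t}$ respects the $S_2$-interface condition as well as the $S_1$ one, where the direction of the $\sigma$-asymmetry is reversed; both reduce, however, to the same statement about the logarithmic derivative of $\tilde\psi$, as sketched above.
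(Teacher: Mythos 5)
Your proposal is correct and follows essentially the same route as the paper: a Lax--Milgram plus Krein--Rutman argument in a suitably weighted periodic Sobolev space for existence and simplicity, a sliding comparison against the principal eigenfunction for the max-inf formula, a log-convex combination for concavity, and coincidence of problems for $\lambda(0)=\lambda_1$. The only cosmetic difference is in the concavity step: the paper first passes to the rescaled test functions $\tilde\psi=e^{-\mu x}\psi$ and rewrites~\eqref{5.16} in those variables before taking the geometric mean, whereas you take $\psi_1^t\psi_2^{1-t}$ directly and record the explicit identity $\mathcal{L}_\mu\psi/\psi - t\mathcal{L}_{\mu_1}\psi_1/\psi_1 - (1-t)\mathcal{L}_{\mu_2}\psi_2/\psi_2 = d(x)t(1-t)[(a-b)-(\mu_1-\mu_2)]^2$; the two computations are equivalent, and your verification of membership in $E_\mu$ via the $\tilde\psi_i$ matches the paper's.
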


\begin{proof}
We first fix $\mu\in\R$. The existence of a unique principal eigenvalue for problem~\eqref{7.6} can be shown similarly as for~\eqref{3.10}. This time, we introduce the space $H$ of periodic functions belonging to $H^1_{loc}(\R)$, with $\|u\|_H^2=\|u\|_{H^1(-l_1,0)}^2+(1/k)\|u\|_{H^1(0,l_2)}^2$, and $G$ the set of continuous periodic functions $u$ such that $u|_{[-l_1,0]}$ and $u|_{[0,l_2]}$ are of class $C^1([-l_1,0])$ and $C^1([0,l_2])$ respectively, with $\|u\|_G=\|u|_{[-l_1,0]}\|_{C^1([-l_1,0])}+\|u|_{[0,l_2]}\|_{C^1([0,l_2])}$. We also set $\Lambda:=\max\big(f'_1(0)+d_1\mu^2,f'_2(0)+d_2\mu^2\big)+1$. For $g\in G$, we consider the following problem
\begin{equation}
\label{7.7}
\begin{aligned}
\begin{cases}
-d(x)u''+2d(x)\mu u'+\big(\Lambda-d(x)\mu^2-f_s(x,0)\big) u= g,~~ &\text{in }\mathbb{R}\backslash S,\\
u(x^-)= u(x^+),~[-\mu  u+ u'](x^-)=\sigma[-\mu  u+ u'](x^+),~~&x=nl,\\
u(x^-)= u(x^+),~\sigma[-\mu  u+  u'](x^-)=[-\mu  u+ u'](x^+),~~& x=nl+l_2,\\
u~\text{is periodic}.
\end{cases}
\end{aligned}
\end{equation}
We can solve this problem first in a weak sense, that is, we look for $u\in H$ such that $B(u,z)=\langle g, z \rangle$ for all $z\in H$, where the bilinear form $B$ is defined by
$$\baa{rcl}
B(u,z) & = & \displaystyle\int^0_{-l_1} d_1 u'z'+d_1\mu u'z-d_1\mu uz'+\big(\Lambda-d_1 \mu^2-f'_1(0)\big)uz\vspace{3pt}\\
& & \displaystyle+\frac{1}{k}\int_0^{l_2} d_2 u'z'+d_2\mu u'z-d_2\mu uz'+\big(\Lambda-d_2\mu^2-f'_2(0)\big)uz,\eaa$$
and the scalar product $\langle,\rangle$ is defined by
\begin{align*}
\langle g, z\rangle=\int^0_{-l_1} gz ~+\frac{1}{k}\int_0^{l_2} gz ~.
\end{align*} 
Clearly, the map $z\mapsto\langle g,z\rangle$ is continuous in $H$, and $B$ is continuous in $H\times H$. Moreover, it is easily seen that, for any $u\in H$, $B(u,u)\ge \min\big(d_1,d_2,1\big)\Vert u \Vert^2_H$, whence $B$ is coercive. The Lax-Milgram theorem implies the existence of a unique $u\in H$ (hence,~$u$ can be identified with its unique continuous representative in $\R$) satisfying $B(u,z)=\langle g, z \rangle$ for all $z\in H$, and $\|u\|_H\le C_1\|g\|_{L^2(-l_1,l_2)}$ for a positive constant $C_1$ only depending on $d_{1,2}$ and~$k$. As for~\eqref{3.10}, one gets that $u|_{[-l_1,0]}$ and~$u|_{[0,l_2]}$ are in $H^2([-l_1,0])$ and $H^2([0,l_2])$ and then in $C^3([-l_1,0])$ and $C^3([0,l_2])$ respectively, that $\|u|_{[-l_1,0]}\|_{C^3([-l_1,0])}+\|u|_{[0,l_2]}\|_{C^3([0,l_2])}\le C_2\|g\|_G$ with a positive constant $C_2$ depending only on $d_{1,2}$, $k$, $l_{1,2}$, $f'_{1,2}(0)$ and $\mu$, and that the equations in~\eqref{7.7} are satisfied pointwise. Therefore, the linear mapping $T:g\in G\mapsto Tg:=u\in G$ is compact. Let now $\mathcal{K}$ be the cone $\mathcal{K}=\{u\in G:u\ge 0~\text{in }\R\}$. Its interior $\mathring{\mathcal{K}}$ is not empty, and $\mathcal{K}\cap(-\mathcal{K})=\{0\}$.  We claim that, if $g\in\mathcal{K}\!\setminus\!\{0\}$, then $u\in\mathring{\mathcal{K}}$. Indeed, by using the equality $B(u,z)=\langle g, z\rangle
$ with~$z:=u^-=\max(-u,0)\in H$, one has
$$-\int_{-l_1}^0\!\!\!\big(d_1((u^-)')^2+(\Lambda-d_1\mu^2-f'_1(0))(u^-)^2\big)-\int_0^{l_2}\!\!\!\big(d_2((u^-)')^2+(\Lambda-d_2\mu^2-f'_2(0))(u^-)^2\big)\!=\!\langle g,u^-\rangle\!\ge\!0,$$
hence $u^-\equiv 0$, that is, $u\ge 0$ in $\R$. From the strong elliptic maximum principle and the Hopf lemma, together with the fact that $g\ge\not\equiv0$, one concludes that~$u>0$ in~$[-l_1,l_2]$ and then in~$\R$ by periodicity. Therefore, $T(\mathcal{K}\!\setminus\!\{0\})\subset\mathring{\mathcal{K}}$. As for~\eqref{3.10}, one then infers from the Krein-Rutman theory the existence and uniqueness of a principal eigenpair $(\lambda,\psi)$ solving~\eqref{7.6}. We then call~$\lambda(\mu)$ this principal eigenvalue $\lambda$. Notice that, for each patch $I\subset\R$, the function $\psi|_{\bar I}$ is then of class $C^\infty(\bar I)$ since $d$ and $f_s(\cdot,0)$ are constant in $I$.

Let us now prove the max-inf representation~\eqref{5.16} of $\lambda(\mu)$. Since $\psi\in E_\mu$, one has
\begin{align*}
\lambda(\mu)\le \sup_{\psi\in E_\mu}\inf_{x\in\mathbb{R}\setminus S}\frac{\mathcal{L}_\mu\psi(x)}{\psi(x)}.
\end{align*}
To show the reverse inequality, assume by way of contradiction that there is $\varphi\in E_\mu$ such that
\begin{align*}
\lambda(\mu)<\inf_{x\in\mathbb{R}\setminus S}\frac{\mathcal{L}_\mu\varphi(x)}{\varphi(x)}.
\end{align*}
Then there exists $\eta>0$ such that
$$-d(x)\varphi''(x)+2d(x)\mu\varphi'(x)-(d(x)\mu^2+f_s(x,0))\varphi(x)-\lambda(\mu)\varphi(x)\ge \eta \varphi(x)>0~~\text{for all }x\in\R\setminus S.$$
 Since $\psi,\varphi\in E_\mu$, there exists $\vartheta>0$ such that $\varphi\ge \vartheta\psi$ in $\mathbb{R}$ with equality somewhere. Set $w:=\varphi-\vartheta\psi$. Then $w\in\mathbb{P}$, $w\ge0$ in $\R$, $w|_{\bar I}\in C^2(\bar I)$ for each patch $I\subset\R$, and $w$ satisfies
\begin{equation}\label{eqw}
\begin{aligned}
\begin{cases}
-d(x)w''(x)+2d(x)\mu w'(x)-(d(x)\mu^2+f_s(x,0))w(x)-\lambda(\mu) w(x)>0, &x\in\mathbb{R}\backslash S,\\
w(x^-)= w(x^+),~\ \ [-\mu  w+ w'](x^-)=\sigma[-\mu  w+ w'](x^+), &x=nl,\\
w(x^-)= w(x^+),~\,\sigma[-\mu  w+  w'](x^-)=[-\mu  w+ w'](x^+), & x=nl+l_2,\\
w~\text{is}~\text{periodic},
\end{cases}
\end{aligned}
\end{equation}
and there exists $x_0\in\mathbb{R}$ such that $w(x_0)=0$. The point $x_0$ can not belong to $\R\setminus S$ because of the strict inequality in the first line of~\eqref{eqw}. Therefore, $w>0$ in $\R\setminus S$ and $x_0\in S$. The Hopf lemma then implies that $w'(x_0^+)>0$ and $w'(x_0^-)<0$, with $w(x_0)=0$, contradicting the interface conditions in~\eqref{eqw}. One has then reached a contradiction. Hence,
\begin{align*}
\lambda(\mu)\ge \sup_{\psi\in E_\mu}\inf_{x\in\mathbb{R}\setminus S}\frac{\mathcal{L}_\mu\psi(x)}{\psi(x)}.
\end{align*}
The max-inf characterization~\eqref{5.16} of $\lambda (\mu)$ follows, and the supremum is a maximum since $\psi\in E_\mu$.
 
Next, we prove the concavity of the function $\mu\mapsto\lambda(\mu)$. With the change of functions $\psi(x)=e^{\mu x}\widetilde\psi(x)$ in~\eqref{5.16}, one has 
\begin{align*}
\frac{\mathcal{L}_\mu\psi(x)}{\psi(x)}=\frac{-d(x)\widetilde\psi''(x)}{\widetilde\psi(x)}-f_s(x,0)\ \hbox{ for all }x\in\R\setminus S,
\end{align*}
hence
\begin{align}
\label{5.17}
\lambda(\mu)=\max_{\widetilde\psi\in \widetilde E_\mu}\inf_{x\in\mathbb{R}\setminus S}\bigg(\frac{-d(x)\widetilde\psi''(x)}{\widetilde\psi(x)}-f_s(x,0)\bigg),
\end{align}
where
$$\baa{rcl}
\widetilde E_\mu & = & \big\{\widetilde\psi\in\mathcal{C}: x\mapsto e^{\mu x}\widetilde{\psi}(x)\in\mathbb{P},\ \widetilde \psi|_{\bar I}\in C^2(\bar I)\hbox{ for each patch }I\subset\R,\ \widetilde\psi>0\hbox{ in }\R,\vspace{3pt}\\
& & \ \ \widetilde\psi~\text{satisfies the interface conditions in}~\eqref{ep-0}\big\}.\eaa$$
Consider any real numbers $\mu_1$ and $\mu_2$, and any $t\in[0,1]$, and set $\mu=t\mu_1+(1-t)\mu_2$. One has to verify that $\lambda(\mu)\ge t\lambda(\mu_1)+(1-t)\lambda(\mu_2)$. Let $\widetilde\psi_1$ and $\widetilde\psi_2$ be arbitrarily chosen in $\widetilde E_{\mu_1}$ and $\widetilde E_{\mu_2}$, respectively. Define $z_1=\ln\widetilde\psi_1$, $z_2=\ln\widetilde\psi_2$, $z=tz_1+(1-t)z_2$ and $\widetilde\psi=e^z$. We claim that $\widetilde\psi\in\widetilde E_\mu$. In fact, since $\widetilde\psi=\widetilde\psi_1^t\widetilde\psi_2^{1-t}$, then $\widetilde\psi\in\mathcal{C}$ and $\widetilde\psi|_{\bar I}\in C^2(\bar I)$ for each patch $I\subset\R$. Furthermore, the function $x\mapsto e^{\mu x}\widetilde{\psi}(x)=(e^{\mu_1x}\widetilde{\psi}_1(x))^t\times(e^{\mu_2x}\widetilde{\psi}_2(x))^{1-t}$ is periodic, and the flux conditions in~\eqref{ep-0} can be easily derived from $\widetilde\psi'=(\widetilde\psi_1^t\widetilde\psi_2^{1-t})'=t\widetilde\psi_1^{t-1}\widetilde\psi_1'\widetilde\psi_2^{1-t}+(1-t)\widetilde\psi_2^{-t}\widetilde\psi'_2\widetilde\psi_1^t$ in $\R\setminus S$ and from the fact that both $\widetilde\psi_1\in \widetilde E_{\mu_1}$ and $\widetilde\psi_2\in \widetilde E_{\mu_2}$ satisfy  the interface conditions in~\eqref{ep-0}. Therefore, by \eqref{5.17} we have
\begin{align*}
\lambda(\mu)\ge \inf_{x\in\mathbb{R}\setminus S}\bigg(\frac{-d(x)\widetilde\psi''(x)}{\widetilde\psi(x)}-f_s(x,0)\bigg).
\end{align*}
Notice that, for each $x\in\R\setminus S$, one has $-d(x)\widetilde\psi''(x)/\widetilde\psi(x)=-d(x)z''(x)-d(x)(z'(x))^2$, and 
\begin{align*}
(z'(x))^2=&(tz_1'(x)+(1-t)z_2'(x))^2= t(z'_1(x))^2+(1-t)(z'_2(x))^2-t(1-t)(z'_1(x)-z'_2(x))^2\\
\le &t(z'_1(x))^2+(1-t)(z'_2(x))^2,
\end{align*}
hence
$$\baa{rcl}
\displaystyle\frac{-d(x)\widetilde\psi''(x)}{\widetilde\psi(x)}-f_s(x,0) & \ge & t\,\big(\!-d(x)z_1''(x)-d(x)(z'_1(x))^2-f_s(x,0)\big)\vspace{3pt}\\
& & +(1-t)\,\big(\!-d(x)z_2''(x)-d(x)(z'_2(x))^2-f_s(x,0)\big).\eaa$$
Eventually, we find that 
$$\baa{rcl}
\lambda(\mu) & \ge & \displaystyle\inf_{x\in\mathbb{R}\setminus S}\bigg(\frac{-d(x)\widetilde\psi''(x)}{\widetilde\psi(x)}-f_s(x,0)\bigg)\vspace{3pt}\\
& \ge & \displaystyle t\inf_{x\in\mathbb{R}\setminus S}\bigg(\frac{-d(x)\widetilde\psi_1''(x)}{\widetilde\psi_1(x)}-f_s(x,0)\bigg) +(1-t)\inf_{x\in\mathbb{R}\setminus S}\bigg(\frac{-d(x)\widetilde\psi_2''(x)}{\widetilde\psi_2(x)}-f_s(x,0)\bigg).\eaa$$
Since $\widetilde\psi_1$ and $\widetilde\psi_2$ were arbitrarily chosen in $\widetilde E_{\mu_1}$ and $\widetilde E_{\mu_2}$ respectively, one infers from \eqref{5.17} that  $\lambda(\mu)\ge t\lambda(\mu_1)+(1-t)\lambda(\mu_2)$. That is, $\mu\mapsto\lambda(\mu)$ is concave in $\R$, which also yields the continuity of this function. Lastly, we also observe that the problem~\eqref{7.6} coincides with~\eqref{ep-0} when $\mu=0$, that is, $\lambda(0)=\lambda_1$, which is here negative by assumption. This completes the proof of Lemma~\ref{lemma-concave}.
\end{proof}

Since for each $t>0$ the linear operator $L_{\mu,t}$ defined by~\eqref{7.5}--\eqref{777} is strongly positive and compact, the Krein-Rutman theorem again implies that its spectral radius $r(L_{\mu,t})$ is positive and is the principal eigenvalue of $L_{\mu,t}$, that is, $r(L_{\mu,t})=e^{-\lambda(\mu)t}$.

We are now in a position to give variational formulas for the rightward and leftward asymptotic spreading speeds $c^*_\pm$ given by~\eqref{spread1}--\eqref{spread2} via the linear operators approach.
 
\begin{theorem}\label{thm-formula of asp}
Let $c^*_+$ and $c^*_-$ be the rightward and leftward asymptotic spreading speeds of $Q_1$, given by~\eqref{spread1}--\eqref{spread2}. Then, 
\begin{align}
\label{c_+-}
c^*_+=\inf_{\mu>0}\frac{-\lambda(\mu)}{\mu},~~c^*_-=\inf_{\mu>0}\frac{-\lambda(-\mu)}{\mu}.
\end{align}
Furthermore, we have $c^*_+=c^*_->0$. 
\end{theorem}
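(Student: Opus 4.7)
My plan is to combine the abstract monotone semiflow theory of Liang--Zhao and Weinberger~\cite{LZ2007,LZ2010,W2002} with the spectral analysis of $\mathcal{L}_\mu$ from Lemma~\ref{lemma-concave}. Proposition~\ref{prop-semiflow} and properties (E1)--(E5) already supply the structural hypotheses required: monotonicity, subhomogeneity, translation invariance under $l\Z$, continuity and compactness in the compact-open topology, and the fact that $0$ and $p$ are the only fixed points of $\{Q_t\}$ in $\mathcal{C}_p$. The KPP condition~\eqref{2.4} together with the comparison principle of Proposition~\ref{bdd-cp} yields the pointwise bound $Q_t(\omega)\le\mathbb{L}_t(\omega)$ on $\R$ for every $\omega\in\mathcal{C}_p$ and $t\ge 0$, which allows the linearized system~\eqref{7.5} to dominate the nonlinear one. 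Invoking~\cite[Theorem~3.10]{LZ2010} (or its analog in~\cite{LZ2007,W2002}) for the time-$1$ maps $Q_1$ and $L_{\pm\mu,1}$, together with the Krein--Rutman identification $r(L_{\mu,1})=e^{-\lambda(\mu)}$ coming from Lemma~\ref{lemma-concave}, will yield the variational formulas~\eqref{c_+-}.

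For the equality $c^*_+=c^*_-$, I would show that $\mu\mapsto\lambda(\mu)$ is even via a formal-adjoint computation in the weighted Hilbert space $X$ with the inner product~\eqref{inner product on X}. Integrating by parts over one period produces boundary contributions at each $x\in S$; substituting the flux condition $[-\mu u+u'](x^-)=\sigma[-\mu u+u'](x^+)$ for $u\in E_\mu$ at $x\in S_1$ (and the symmetric one at $S_2$) and exploiting the identity $d_2/k=d_1\sigma$ from~\eqref{sigma}, the cross-terms proportional to $u'(x^+)v(x)$ drop out exactly. Collecting what remains and requiring the total interface contribution to vanish for every admissible $u$ yields the condition $[\mu v+v'](x^-)=\sigma[\mu v+v'](x^+)$ at $x\in S_1$ (and the analogue at $S_2$), which is precisely the flux condition defining $E_{-\mu}$. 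Hence the formal $X$-adjoint of $\mathcal{L}_\mu|_{E_\mu}$ is $\mathcal{L}_{-\mu}|_{E_{-\mu}}$, and Krein--Rutman applied to the associated compact strongly positive semigroups gives $\lambda(\mu)=\lambda(-\mu)$. After the substitution $\mu\mapsto-\mu$ in the second infimum in~\eqref{c_+-}, this forces $c^*_+=c^*_-$.

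Finally, the positivity $c^*_+>0$ follows from the evenness of $\lambda$ just established combined with the concavity from Lemma~\ref{lemma-concave}: these force $\mu=0$ to be the maximum of $\lambda$, so $\lambda(\mu)\le\lambda(0)=\lambda_1<0$ on all of $\R$. Hence $-\lambda(\mu)\ge-\lambda_1>0$ on $\R$, and $-\lambda(\mu)/\mu\to+\infty$ as $\mu\to 0^+$; by concavity the quotient $-\lambda(\mu)/\mu$ converges as $\mu\to+\infty$ either to $+\infty$ or to a finite positive limit, so the infimum is attained at some $\mu^*>0$ and is strictly positive. The main obstacle I anticipate is the formal-adjoint computation at the interfaces: although it is morally the standard Sturm--Liouville duality, the asymmetric jump parameter $\sigma\neq 1$ produces residual $\mu v$-type terms whose cancellation is not automatic and depends crucially on the specific weighted inner product and on the identity $d_2/k=d_1\sigma$. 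A secondary, bookkeeping-type issue will be verifying the precise hypotheses of~\cite[Theorem~3.10]{LZ2010} in the patchy setting, but the Schauder estimates of Theorem~\ref{thm-m-wellposedness} provide enough compactness and continuity in the compact-open topology to cover this.
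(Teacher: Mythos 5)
Your proposal diverges from the paper on two points, one of which is a valid alternative and one of which contains a genuine gap.

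For the evenness $\lambda(\mu)=\lambda(-\mu)$, the paper exploits the reflection $x\mapsto -l_1-x$, which maps the patch structure to itself and sends the eigenvalue problem~\eqref{7.6} at parameter $\mu$ to the one at $-\mu$ with the same eigenvalue; uniqueness does the rest in one line. Your formal-adjoint route through the weighted inner product~\eqref{inner product on X} is a valid alternative: the boundary contribution at $x\in S_1$ collapses, after using $d_2/k=\sigma d_1$ and the $E_\mu$-flux condition on $u$, to $d_1u(x)\bigl\{[\mu v+v'](x^-)-\sigma[\mu v+v'](x^+)\bigr\}$, which vanishes for all admissible $u$ exactly when $v$ satisfies the $E_{-\mu}$-flux condition, and analogously at $S_2$. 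Combined with Krein--Rutman duality this does give $\lambda(\mu)=\lambda(-\mu)$. It is, however, substantially heavier bookkeeping than the reflection argument and buys nothing extra here.

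The positivity argument has a real gap. Concavity of $\lambda$ and evenness guarantee $-\lambda(\mu)\ge-\lambda_1>0$ for all $\mu$ and that $-\lambda(\mu)/\mu$ has a limit $L\in[0,+\infty]$ as $\mu\to+\infty$; they do \emph{not} exclude $L=0$, which happens precisely when $\lambda$ is constant (a constant is concave and even). Your sentence ``converges to $+\infty$ or a finite positive limit'' silently rules out the constant case without justification, and in the patchy setting there is no obvious test function (e.g. $\psi\equiv 1\notin E_\mu$ unless $\sigma=1$) that gives $\lambda(\mu)\to-\infty$ cheaply. The paper sidesteps this entirely with a dynamical argument: take a compactly supported $\omega\in\mathcal{C}_p$, use Theorem~\ref{thm-long time behavior}~(i) to find $T$ with $Q_T(\omega)\ge\omega(\cdot\pm l)$, iterate to get $Q_{mT}(\omega)\ge\omega(\cdot\pm ml)$, and contradict~\eqref{spread1} if $c^*_\pm\le 0$. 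You should either adopt that argument or separately prove that $\lambda$ is non-constant. A secondary omission: to get the lower bound $c^*_+\ge\inf_{\mu>0}(-\lambda(\mu))/\mu$, the abstract theorem requires showing that $Q_1$ dominates, on small data, the linear semigroup $\mathbb{L}^\varepsilon_1$ attached to $f_s(x,0)-\varepsilon$, and then letting $\varepsilon\to 0$; your proposal only records the one-sided bound $Q_t\le\mathbb{L}_t$, which by itself gives only the upper bound.
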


\begin{proof}
Due to assumption \eqref{2.4}, and we have $f(x,s)\le f_s(x,0)s$ for all $x\in\mathbb{R}\setminus S$ and $s\ge0$. Then, for every $\omega\in\mathcal{C}_p$ and $n\in\N$, the solution $u^n:=u^n(\cdot,\cdot;\omega)$ of~\eqref{tp-1}--\eqref{tp-4} satisfies
\begin{equation*}
\begin{aligned}
\begin{cases}
u^n_t\le d(x)u^n_{xx}+f_s(x,0)u^n,~~&t>0,\ x\in(-nl,nl)\backslash S,\\
u^n(t,x^-)=u^n(t,x^+),~u^n_x(t,x^-)=\sigma u^n_x(t,x^+),~~&t> 0,\ x\in S_1\cap(-nl,nl),\\
u^n(t,x^-)=u^n(t,x^+),~\sigma u^n_x(t,x^-)=u^n_x(t,x^+),~~&t> 0,\ x\in S_2\cap(-nl,nl).
\end{cases}
\end{aligned}
\end{equation*}
Proposition~\ref{bdd-cp} and the construction of the solutions $U(\cdot,\cdot;\omega)$ of~\eqref{7.4} by using the same truncation and limit process as in the proof of Theorem~\ref{thm-m-wellposedness} imply that $u^n(t,x;\omega)\le U^n(t,x;\omega)$ for all $(t,x)\in[0,+\infty)\times[-nl,nl]$ and $n\in\N$, hence $u(t,x;\omega)\le U(t,x,\omega)$ for all $(t,x)\in[0,+\infty)\times\R$. In other words, $Q_t(\omega)\le\mathbb{L}_t(\omega)$ in $\R$ for all $t\ge0$ and  $\omega\in\mathcal{C}_p$. Particularly, by taking $t=1$, one has $Q_1(\omega)\le\mathbb{L}_1(\omega)$ in $\R$ for all $\omega\in\mathcal{C}_p$. For any $\mu\in\R$, one has
$$\mathbb{L}^\mu(\psi)(x):=e^{\mu x}\mathbb{L}_1(y\mapsto e^{-\mu y}\psi(y))(x)=L_{\mu,1}(\psi)(x)\ \hbox{ for every $\psi\in\mathbb{P}$ and $x\in\R$},$$
thanks to \eqref{777}. It then follows that $e^{-\lambda(\mu)}$ is the principal eigenvalue of $\mathbb{L}^\mu$. On the other hand, by Lemma \ref{lemma-concave}, the function $\mu\mapsto \ln (e^{-\lambda(\mu)})=-\lambda(\mu)$ is convex. With similar arguments as in~\cite[Theorem~2.5]{W2002} and in~\cite[Theorem~3.10~(i)]{LZ2007}, one then obtains that 
\begin{align}
\label{7.8}
c^*_+\le\inf_{\mu>0}\frac{\ln(e^{-\lambda(\mu)})}{\mu}=\inf_{\mu>0}\frac{-\lambda(\mu)}{\mu}.
\end{align}

On the other hand,  for any given $\varep>0$, there is a $\delta>0$ such that 
\begin{align*}
f(x,s)\ge (f_s(x,0)-\varep)s,~~\hbox{for all $x\in\mathbb{R}\setminus S$ and $u\in[0,\delta]$}.
\end{align*}
From the continuity of the solutions of~\eqref{m-pb}--\eqref{d-f} with respect to the initial conditions, as stated in Theorem~\ref{thm-m-wellposedness}, there is a positive real number $\eta$ such that $\eta\le p$ in $\R$ and $u(t,x;\eta)\le \delta$ for all $t\in[0,1]$ and $x\in\mathbb{R}$. Define
$$\mathcal{C}_\eta=\big\{\omega\in\mathcal{C}: 0\le\omega\le\eta\hbox{ in }\R\big\}.$$
It then follows from Theorem~\ref{thm-m-wellposedness} that 
\begin{equation*}
0\le u(t,x;\omega)\le u(t,x;\eta)\le\delta,~~\hbox{for all $\omega\in\mathcal{C}_\eta$, $t\in[0,1]$ and $x\in\mathbb{R}$}.
\end{equation*}
Thus, for any $\omega\in\mathcal{C}_\eta$, the solution $u(\cdot,\cdot;\omega)$ to \eqref{m-pb}--\eqref{d-f} satisfies
\begin{equation*}
\label{7.9}
\begin{aligned}
\begin{cases}
u_t\ge d(x)u_{xx}+(f_s(x,0)-\varep)u,~~&t\in(0,1],\ x\in\mathbb{R}\backslash S,\\
u(t,x^-)=u(t,x^+),~u_x(t,x^-)=\sigma u_x(t,x^+),~~&t\in(0,1],\ x=nl,\\
u(t,x^-)=u(t,x^+),~\sigma u_x(t,x^-)=u_x(t,x^+),~~&t\in(0,1],\ x=nl+l_2.
\end{cases}
\end{aligned}
\end{equation*}
Consider now the linear problem 
\begin{equation}
\label{7.10}
\begin{aligned}
\begin{cases}
V_t= d(x)V_{xx}+(f_s(x,0)-\varep)V,~~&t>0,\ x\in\mathbb{R}\backslash S,\\
V(t,x^-)=V(t,x^+),~V_x(t,x^-)=\sigma V_x(t,x^+),~~&t> 0,\ x=nl,\\
V(t,x^-)=V(t,x^+),~\sigma V_x(t,x^-)=V_x(t,x^+),~~&t> 0,\ x=nl+l_2.
\end{cases}
\end{aligned}
\end{equation}
Let $\{\mathbb{L}^\varep_t\}_{t\ge 0}$ be the solution maps generated by the above linear system, as for~\eqref{7.4} above ($\mathbb{L}^0_t=\mathbb{L}_t$ for all $t\ge0$). Then, Proposition~\ref{bdd-cp} and the construction of the solutions of~\eqref{7.10} as in the proof of Theorem~\ref{thm-m-wellposedness}  imply that $\mathbb{L}^\varep_t(\omega)\le Q_t(\omega)$ in $\R$ for all $t\in[0,1]$ and $\omega\in\mathcal{C}_\eta$. In particular, $\mathbb{L}^\varep_1(\omega)\le Q_1(\omega)$ in $\R$ for all $\omega\in\mathcal{C}_\eta$. 
 
Denote by $\lambda^\varep(\mu)$ the first eigenvalue of the following eigenvalue problem: 
\begin{equation*}
\label{7.6'}
\begin{aligned}
\begin{cases}
-d(x)\psi''(x)+2\mu d(x)\psi'(x)-(d(x)\mu^2+(f_s(x,0)-\varep))\psi(x)= \lambda^\varep(\mu)\,\psi(x),~~ &x\in\mathbb{R}\backslash S,\\
\psi(x^-)=\psi(x^+),~[-\mu \psi+\psi'](x^-)=\sigma[-\mu \psi+\psi'](x^+),~~&x=nl,\\
\psi(x^-)=\psi(x^+),~\sigma[-\mu \psi+ \psi'](x^-)=[-\mu \psi+\psi'](x^+),~~ &x=nl+l_2,\\
\psi~\text{is periodic in}~\mathbb{R}, ~\psi>0,~ \Vert \psi\Vert_{L^\infty(\mathbb{R})}=1.
\end{cases}
\end{aligned}
\end{equation*}
By uniqueness of the principal eigenvalue of~\eqref{7.6}, there holds $\lambda^\varep(\mu)=\lambda(\mu)+\varep$. From the convexity of the function $\mu\mapsto-\lambda^\varep(\mu)=-\lambda(\mu)-\varep$ and the arguments in \cite[Theorem~2.4]{W2002} and in \cite[Theorem~3.10~(ii)]{LZ2007}, one infers that
\begin{align}
\label{7.11}
c^*_+\ge\inf_{\mu>0}\frac{\ln(e^{-\lambda^\varep(\mu)})}{\mu}=\inf_{\mu>0}\frac{-\lambda(\mu)-\varep}{\mu},
\end{align}
and this property is valid for all $\varep>0$. Together with~\eqref{7.8},~\eqref{7.11}, and the positivity of $-\lambda(0)=-\lambda_1$, it follows that
$$c^*_+=\inf_{\mu>0}\frac{-\lambda(\mu)}{\mu}.$$

By the change of variable $v(t,x;\omega)=u(t,-l_1-x;\omega(-l_1-\cdot))$,\footnote{Notice that $p(x)=p(-l_1-x)$ for all $x\in\R$ by invariance of~\eqref{m-pb-elliptic} with respect to this change of variable and by the uniqueness result of Theorem~\ref{thm-2.4-uniqueness}, hence $x\mapsto\omega(-l_1-x)\in\mathcal{C}_p$ for every $\omega\in\mathcal{C}_p$.} one gets that $c^*_-$ is the rightward asymptotic spreading speed of the resulting problem for the solutions $v(\cdot,\cdot;\omega)$. Therefore,
\begin{align*}
c^*_-=\inf_{\mu>0}\frac{-\lambda(-\mu)}{\mu}.
\end{align*}
Consequently, \eqref{c_+-} is proved.

Next, for any $\mu\in\R$, if $\psi_\mu$ is the principal eigenfunction of the problem~\eqref{7.6}, with principal eigenvalue $\lambda(\mu)$, then the function $x\mapsto\hat\psi(x):=\psi_\mu(-l_1-x)$ satisfies~\eqref{7.6} with $-\mu$ instead of $\mu$ in the equations and the interface conditions, but with the same eigenvalue $\lambda(\mu)$. By uniqueness of the principal eigenvalue, one deduces that $\lambda(-\mu)=\lambda(\mu)$. Therefore,~\eqref{c_+-} yields $c^*_+=c^*_-$.

Lastly, consider any compactly supported $\omega\in\mathcal{C}_p$ such that $\omega\not\equiv0$ in $\R$ and $\omega(x)<p(x)$ for all $x\in\R$ (and remember that $\omega$ and $p$ are continuous, and that $p$ is periodic and positive in $\R$). From Theorem~\ref{thm-long time behavior}~(i), one knows that $u(t,\cdot;\omega)\to p$ as $t\to+\infty$ locally uniformly in $\R$. Hence, there is $T\in\N$ such that $u(T,\cdot;\omega)\ge\omega(\cdot\pm l)$ in $\R$. Theorem~\ref{thm-m-wellposedness} and Proposition~\ref{prop-semiflow} then imply in particular that $u(2T,\cdot;\omega)\ge\omega(\cdot\pm 2l)$ in $\R$, hence $u(mT,\cdot;\omega)\ge\omega(\cdot\pm ml)$ in $\R$ for all $m\in\N$ by an immediate induction. In other words, $Q_{mT}(\omega)\ge\omega(\cdot\pm ml)$ in $\R$ for all $m\in\N$, and it follows from property~\eqref{spread1} that $c^*_\pm>0$. This completes the proof of Theorem~\ref{thm-formula of asp}. 
\end{proof}

\begin{proof}[Proofs of Theorems~$\ref{thm-spreading result}$ and~$\ref{thm-existence of PTW}$]
By~\cite[Theorems~5.2 and~5.3]{LZ2010}, together with Theorems~\ref{thm-long time behavior}~(i) and~\ref{thm-formula of asp}, one directly obtains Theorem~\ref{thm-spreading result}, with spreading speed $c^*:=c^*_\pm$, as well as the existence of time-nondecreasing periodic rightward and leftward traveling waves for problem~\eqref{m-pb}--\eqref{d-f} with all and only all speeds $c\ge c^*$. To complete the proof of Theorem~\ref{thm-existence of PTW}, it is left to show that these periodic traveling waves are strictly monotone in time. For $c\ge c^*>0$, consider a periodic rightward (the case of leftward waves can be handled similarly) traveling wave solving~\eqref{m-pb}--\eqref{d-f}, written as $u(t,x)=W(x-ct,x)$ (with $u(t,x)=Q_{t-t'}(u(t',\cdot))(x)$ for all $t'\le t\in\R$ and $x\in\R$), where $W(s,x)$ is periodic in $x$, nonincreasing in $s$, and $W(-\infty,x)=p(x)$, $W(+\infty,x)=0$ for all $x\in\R$. Notice in particular that $0\le u(t,x)\le p(x)$ for all $(t,x)\in\R\times\R$, that $u(t,x)\to 0$ as $t\to-\infty$ and $u(t,x)\to p(x)$ as $t\to+\infty$ for every $x\in\R$, and that $u(t+h,x)\ge u(t,x)$ for every $h>0$ and $(t,x)\in\R^2$. From Proposition~\ref{cp}, it follows that, for every $h>0$ and~$t_0\in\R$, either $u(\cdot+h,\cdot)\equiv u$ in $(t_0,+\infty)\times\R$, or $u(\cdot+h,\cdot)>u$ in $(t_0,+\infty)\times\R$. Since $u(-\infty,x)=0<p(x)=u(+\infty,x)$ for every $x\in\R$, one easily infers that, for every $h>0$, $u(\cdot+h,\cdot)>u$ in $\R\times\R$. Therefore, $u$ is increasing in $t$ and the periodic rightward traveling wave~$W(x-ct,x)$ is decreasing in its first argument, and all properties of Definition~\ref{def4} are therefore satisfied.
\end{proof}
 

\appendix
\section{Comparison principles}
\label{Appendix-A}

In this appendix, we prove comparison results for the problem~\eqref{m-pb}--\eqref{d-f}, as well as for a class of more general non-periodic versions of~\eqref{m-pb}--\eqref{d-f}, and for the patchy model in an interval $(a,b)\subset\mathbb{R}$ composed of finitely many patches, say $I_i$ for $i=1,\ldots,n$. For the latter, which we first deal with, the landscape $(a,b)$ can be either bounded or unbounded.  Set $-\infty\le a=x_0<x_1<\cdots<x_n=b\le+\infty$ and $I_i=(x_{i-1},x_i)$ for $i=1,\ldots,n$. Since the results will be used in the present paper and in the future work~\cite{HLZ2}, we state them in more generality to cover  different applications. We consider a one-dimensional parabolic operator
\begin{align*}
\mathcal{L}u:=u_t-d(x)u_{xx}-c(t,x)u_x-F(x,u),~~\text{for}~t>0\hbox{ and }x\in(a,b)\backslash\{x_1,\ldots,x_{n-1}\}=\bigcup_{i=1}^nI_i,
\end{align*}
with interface conditions
\begin{align}
\label{app-interface}
u(t,x_i^-)=u(t,x_i^+)\hbox{ and }u_x(t,x_i^-)=\sigma_iu_x(t,x_i^+),~~\hbox{for }t>0\hbox{ and }i=1,\ldots,n-1.
\end{align}
If $a$ or $b$ is finite, we impose Dirichlet-type boundary conditions:
\begin{equation}
\label{app-boundary}
u(t,a)=\varphi^-(t)~~\text{or}~~u(t,b)=\varphi^+(t),~~\text{for}~t\ge0,
\end{equation}
where $\varphi^\pm:[0,+\infty)\to\R$ are given continuous functions. Here, the function $x\mapsto d(x)$ is assumed to be constant and positive in each patch, i.e., $d|_{I_i}=d_i>0$ for some constant $d_i$,\footnote{From the proofs below, it is easily seen that we can consider more general diffusion coefficients $d(t,x)$ such that $d|_{(0,+\infty)\times I_i}$ can be extended to a continuous and positive function in $[0,+\infty)\times\overline{I_i}$, for each $1\le i\le n$.} the function~$c$ is assumed to be continuous and bounded in $(0,T_0)\times\cup_{i=1}^nI_i$ for every $T_0\in(0,+\infty)$, the $\sigma_i$'s are given positive real numbers, and, for each $1\le i\le n$, $F(x,s)=f_i(s)$ for $(x,s)\in I_i\times\R$, with~$f_i\in C^1(\R)$.

We first give the definition of super- and subsolutions of $\mathcal{L}u=0$ associated with the interface and boundary conditions \eqref{app-interface}--\eqref{app-boundary}.
 
\begin{definition}\label{defsubsuper}
For $T\in(0,+\infty]$, we say that a continuous function $\overline{u}:[0,T)\times\overline{(a,b)}\to\R$,\footnote{The notation $\overline{(a,b)}$ covers all possible four cases when $a$ or $b$ is finite or not. If $a$ and $b$ are finite, then $\overline{(a,b)}=[a,b]$.} which is assumed to be bounded in $[0,T_0]\times\overline{(a,b)}$ for every $T_0\in(0,T)$, is a supersolution for the problem~$\mathcal{L}u=0$ with interface and boundary conditions \eqref{app-interface}--\eqref{app-boundary}, if $\overline{u}|_{(0,T)\times\overline{I_i}}\in  C^{1;2}_{t;x}((0,T)\times\overline{I_i})$ satisfies $\mathcal{L}\overline{u}|_{(0,T)\times I_i}\ge 0$ in the classical sense for each $1\le i\le n$, and if
\begin{equation*}
\overline{u}_x(t,x_i^-)\ge \sigma_i \overline{u}_x(t,x_i^+),~~\text{for}~t\in(0,T)\text{ and}~i=1,\ldots,n-1,
\end{equation*}
and
\begin{equation*}
\overline{u}(t,a)\ge\varphi^-(t)~\text{ or }~\overline{u}(t,b)\ge\varphi^+(t),~~\text{for}~t\in [0,T),
\end{equation*}
provided that $a$ or $b$ is finite. A subsolution can be defined in a similar way with all the inequality signs above reversed.
\end{definition}

The first result of the appendix is a comparison principle between super- and subsolutions when the interval $(a,b)$ is bounded.

\begin{proposition}[Comparison principle in bounded intervals]
\label{bdd-cp}
Assume that $-\infty<a<b<+\infty$. For $T\in(0,+\infty]$, let $\overline{u}$ and $\underline{u}$ be, respectively, a super- and a subsolution in~$[0,T)\times[a,b]$ of $\mathcal{L}u=0$ with~\eqref{app-interface}--\eqref{app-boundary}, and assume that $\overline{u}(0,\cdot)\ge\underline{u}(0,\cdot)$ in $[a,b]$. Then, $\overline{u}\ge\underline{u}$ in~$[0,T)\times[a,b]$ and, if $\overline{u}(0,\cdot)\not\equiv\underline{u}(0,\cdot)$, then $\overline{u}>\underline{u}$ in $(0,T)\times(a,b)$.
\end{proposition}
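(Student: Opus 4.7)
The plan is to reduce the problem to a linear parabolic inequality and then argue by contradiction, combining the classical strong maximum principle inside each patch with a Hopf--lemma argument across each interface. I would fix $T_0 \in (0,T)$, set $w := \overline u - \underline u$, and observe that since $\overline u$ and $\underline u$ are bounded on $[0,T_0]\times[a,b]$ and each $f_i \in C^1(\R)$, the function
\[
b(t,x) := \int_0^1 F_s\bigl(x,\, \underline u(t,x) + \theta\, w(t,x)\bigr)\,d\theta
\]
is bounded on $[0,T_0] \times \bigcup_i I_i$ and $w$ satisfies
\[
w_t - d(x)\, w_{xx} - c(t,x)\, w_x - b(t,x)\, w \ge 0 \quad\text{in each }I_i,
\]
together with $w_x(t,x_i^-) \ge \sigma_i\, w_x(t,x_i^+)$ at the interior interfaces, $w(t,a), w(t,b) \ge 0$ when applicable, and $w(0,\cdot) \ge 0$. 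Choosing $K$ larger than $1 + \sup\lvert b\rvert$ on $[0,T_0]\times\bigcup_i I_i$ and setting $v := e^{-Kt} w$, I would obtain an analogous inequality for $v$ but with a zeroth--order coefficient $m(t,x) := K - b(t,x) \ge 1$, while the sign conditions at interfaces, on $\{a,b\}$ and at $t=0$ are preserved because $e^{-Kt} > 0$.

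Next, I would assume for contradiction that $v$ attains a negative minimum at some point $(t_0,x_0) \in [0,T_0]\times[a,b]$. Since $v(0,\cdot) \ge 0$ and $v(t,a), v(t,b) \ge 0$, necessarily $t_0 > 0$ and $x_0 \in (a,b)$. If $x_0 \in (x_{i-1},x_i)$ lies in the interior of a patch, then $v_t \le 0$, $v_x = 0$, $v_{xx} \ge 0$ and $m v < 0$ at $(t_0,x_0)$, so the supersolution inequality yields $0 \le v_t - d v_{xx} - c v_x + m v < 0$, a contradiction. If $x_0 = x_i$ is an interior interface, then $(t_0,x_i)$ is a negative minimum of $v$ on each of the closed parabolic cylinders $[0,t_0]\times[x_{i-1},x_i]$ and $[0,t_0]\times[x_i,x_{i+1}]$; applying the parabolic Hopf lemma in each adjacent patch (legitimate since $m \ge 1 > 0$ and the minimum value is strictly negative), I would obtain the dichotomy that either $v_x(t_0,x_i^-) < 0$ and $v_x(t_0,x_i^+) > 0$ strictly, or $v$ is constant on one of the two closed cylinders. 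The first alternative directly contradicts the interface condition $v_x(t_0,x_i^-) \ge \sigma_i\, v_x(t_0,x_i^+)$ with $\sigma_i > 0$. The second forces $v$ to equal the negative constant $v(t_0,x_i)$ at $t=0$ on a non-degenerate subinterval, contradicting $v(0,\cdot) \ge 0$. Hence $v \ge 0$ on $[0,T_0]\times[a,b]$ for every $T_0 < T$, which gives $\overline u \ge \underline u$ on $[0,T)\times[a,b]$.

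For the strict statement, I would assume $\overline u(0,\cdot) \not\equiv \underline u(0,\cdot)$ and suppose by contradiction that $w(t_1,x_1) = 0$ at some $(t_1,x_1) \in (0,T)\times(a,b)$, so that $(t_1,x_1)$ is a vanishing interior minimum of the nonnegative function $w$. If $x_1$ is in the interior of a patch $I_j$, the classical parabolic strong maximum principle forces $w \equiv 0$ on $[0,t_1]\times\overline{I_j}$; in particular $w$ vanishes on the adjacent interfaces throughout $[0,t_1]$, and the Hopf--interface dichotomy of the previous paragraph (applied to $w$ itself) then propagates the vanishing to each neighbouring patch, and by finitely many iterations to all of $[0,t_1]\times[a,b]$. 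If $x_1 = x_i$ is itself an interface, the same Hopf--interface dichotomy applies directly. In either case $w(0,\cdot) \equiv 0$ on $[a,b]$, contradicting the hypothesis.

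The main obstacle is precisely the interface handling, since neither the classical strong maximum principle nor the classical Hopf lemma alone crosses an interface across which $w_x$ is discontinuous. The key observation that resolves this is that, after applying the Hopf lemma on each side of an interior interface $x_i$, the strict signs $v_x(t_0,x_i^-) < 0 < v_x(t_0,x_i^+)$ are incompatible with the constraint $v_x(t_0,x_i^-) \ge \sigma_i\, v_x(t_0,x_i^+)$ as soon as $\sigma_i > 0$, since a strictly negative number cannot exceed a strictly positive one. This is where the specific structure of the interface conditions enters crucially, and the remaining ``constant'' branch of the Hopf dichotomy is disposed of by inspecting the value at $t=0$.
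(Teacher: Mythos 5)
Your proof is correct and follows essentially the same strategy as the paper's: multiply by a decaying exponential to make the zeroth-order coefficient favorable, rule out an interior minimum in a patch by the standard argument, and rule out a minimum at an interface by combining the parabolic Hopf lemma on both sides with the condition $\sigma_i>0$, then bootstrap the strict inequality from patch to patch via the strong maximum principle and the same Hopf-interface argument. The one technical variation is that the paper takes $\mu=\max_i\|f_i'\|_{L^\infty([-M,M])}$ (so the zeroth-order coefficient is only $\ge0$) and then adds the perturbation $\varepsilon(t+1)$ to force a strict differential inequality and strictly positive initial/boundary values, which keeps the minimum value at exactly $0$ and makes the Hopf lemma yield strict normal derivatives with no ``constant'' alternative to discuss; you instead take $K>1+\sup|b|$ so that the zeroth-order coefficient is $\ge1$, which rules out interior minima directly and means the global minimum (if negative) can only occur at interfaces, so the Hopf lemma again applies in its strict form without the perturbation -- your explicit treatment of the constant branch is therefore unnecessary but harmless.
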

 
\begin{proof}
Fix any $T_0\in(0,T)$ and set
\be\label{defmu}
M:=\max\big(\|\overline{u}\|_{L^\infty([0,T_0]\times[a,b])},\|\underline{u}\|_{L^\infty([0,T_0]\times[a,b])}\big)\ \hbox{ and }\ \mu:=\max\limits_{1\le i\le  n}\|f'_i\|_{L^\infty([-M,M])}
\ee
(notice that $M$ and $\mu$ are nonnegative real numbers owing to the assumptions on $\overline{u}$, $\underline{u}$ and $f_i$). Define
$$w(t,x):=(\overline{u}(t,x)-\underline{u}(t,x))\,e^{-\mu t}\ \hbox{ for $(t,x)\in[0,T_0]\times[a,b]$}.$$
The function $w$ is continuous in $[0,T_0]\times[a,b]$, with restriction in $(0,T_0]\times\overline{I_i}$ of class $C^{1;2}_{t;x}((0,T_0]\times\overline{I_i})$ for each $1\le i\le n$, and we see from the mean value theorem that $w$ satisfies
\begin{align}\label{w1}
\mathcal{N}w:=w_t-d(x)w_{xx}-c(t,x)w_x+\big(\mu-F_s(x,\eta(t,x))\big)w\ge 0,~\text{for}~(t,x)\in(0,T_0]\!\times\!\bigcup\limits_{i=1}^n I_i,
\end{align} 
where $\eta(t,x)$ is an intermediate value between $\overline{u}(t,x)$ and $\underline{u}(t,x)$ (hence, $|\eta(t,x)|\le M$ and $\mu-F_s(x,\eta(t,x))\ge0$). Moreover, there holds
\begin{align}\label{w2}
w_x(t,x_i^-)\ge \sigma_i w_x(t,x_i^+),~~\text{for}~t\in(0,T_0]~\text{and}~i=1,\ldots,n-1,
\end{align}
together with $w(0,x)=\overline{u}(0,x)-\underline{u}(0,x)\ge 0$ for all $x\in[a,b]$, $w(t,a)\ge 0$ and $w(t,b)\ge 0$ for all $t\in[0,T_0]$. 
 	
Consider now an arbitrary $\varep>0$ and let us introduce the auxiliary function $z$ defined by
$$z(t,x):=w(t,x)+\varep(t+1)\ \hbox{ for $(t,x)\in[0,T_0]\times[a,b]$}.$$
The function $z$ has at least the same regularity as $w$, and $z>0$ in $\{0\}\times[a,b]$ and in $[0,T_0]\times\{a,b\}$. Moreover,
\be\label{Nz1}
\mathcal{N}z=\mathcal{N}w+\varep+ \big(\mu-F_s(x,\eta(t,x))\big)\varep(t+1)\ge\varep>0,~~\text{for}~(t,x)\in(0,T_0]\times\bigcup\limits_{i=1}^n I_i,
\ee
with
\be\label{z}
z_x(t,x_i^-)\ge \sigma_i z_x(t,x_i^+),~~\text{for}~t\in(0,T_0]~\text{and}~i=1,\ldots,n-1.
\ee
We claim that $z(t,x)>0$ for all $(t,x)\in[0,T_0]\times[a,b]$. Assume not. Then, by continuity, there is a point $(t_0,y_0)\in(0,T_0]\times(a,b)$ such that $z(t_0,y_0)=\min_{[0,t_0]\times[a,b]}z=0$. We first assume that $y_0\in I_i$ for some $1\le i\le n$. Since $z_t(t_0,y_0)\le 0$, $z_x(t_0,y_0)=0$ and $z_{xx}(t_0,y_0)\ge 0$, we see that
\be\label{Nz}
\mathcal{N}z(t_0,y_0)=z_t(t_0,y_0)-d_iz_{xx}(t_0,y_0)+c(t_0,y_0)z_x(t_0,y_0)+\big(\mu-f_i'(\eta(t_0,y_0))\big)z(t_0,y_0)\le 0,
\ee
which is impossible by~\eqref{Nz1}. Thus, necessarily, we can assume without loss of generality that $y_0=x_i$ for some $1\le i\le n-1$ and that $z>0$ in $[0,t_0]\times\cup_{i=1}^nI_i$. Then, the Hopf lemma yields
\begin{align*}
z_x(t_0,x_i^-)<0\ \hbox{ and }\ z_x(t_1,x_i^+)>0,
\end{align*}
which contradicts \eqref{z}. Consequently, $z>0$ in $[0,T_0]\times[a,b]$. Since $\varep>0$ was arbitrarily chosen, we obtain that $w\ge 0$ in $[0,T_0]\times[a,b]$, which immediately implies $\overline{u}\ge\underline{u}$ in $[0,T_0]\times[a,b]$, and then in $[0,T)\times[a,b]$ since $T_0\in(0,T)$ was arbitrary.

Let us now further assume that $\overline{u}(0,\cdot)\not\equiv\underline{u}(0,\cdot)$ in $[a,b]$, hence by continuity $\overline{u}(0,\cdot)>\underline{u}(0,\cdot)$ in some non-empty open subinterval of $(a,b)$ which has a non-empty intersection with $I_i$, for some $1\le i\le n$. Since we already know from the previous paragraph that $\overline{u}\ge\underline{u}$ in $[0,T)\times[a,b]$, it follows from the interior strong parabolic maximum principle that $\overline{u}>\underline{u}$ in $(0,T)\times I_i$. If the interval $(a,b)$ reduces to a single patch (that is, $n=1$), then we are done. Otherwise, either~$x_{i-1}$ or~$x_i$ belongs to the open interval $(a,b)$. Let us consider the case when $x_i\in(a,b)$ (hence,~$i\le n-1$). We now claim that $\overline{u}(t,x_i)>\underline{u}(t,x_i)$ for all $t\in(0,T)$. Indeed, otherwise, there is a time $t_0\in(0,T)$ such that $\overline{u}(t_0,x_i)=\underline{u}(t_0,x_i)$, and the Hopf lemma then implies that
$$\overline{u}_x(t_0,x_i^-)<\underline{u}_x(t_0,x_i^-).$$
But $\overline{u}_x(t_0,x_i^+)\ge\underline{u}_x(t_0,x_i^+)$ since $\overline{u}\ge\underline{u}$ in $[0,T)\times\overline{I_{i+1}}$ and $\overline{u}(t_0,x_i)=\underline{u}(t_0,x_i)$. One finally gets a contradiction with the assumptions on the spatial derivatives of the super- and subsolutions~$\overline{u}$ and~$\underline{u}$ at $x_i^\pm$. Therefore, $\overline{u}(t,x_i)>\underline{u}(t,x_i)$ for all $t\in(0,T)$. By continuity and by applying the strong interior parabolic maximum principle in $(0,T)\times I_{i+1}$, we infer that $\overline{u}>\underline{u}$ in $(0,T)\times I_{i+1}$. By an immediate induction, going from one patch to the adjacent one in the left or right directions, we get that $\overline{u}>\underline{u}$ in $(0,T)\times(a,b)$. The proof of Proposition~\ref{bdd-cp} is thereby complete.
\end{proof}
 
Then we  prove in Proposition~\ref{R-cp} the comparison principle when $(a,b)=\mathbb{R}$, still in the case of a finite number of interfaces (the case when the domain is of the form $(a,+\infty)$ with $a\in\R$, or~$(-\infty,b)$ with $b\in\R$, can be handled by a combination and a slight modification of the proofs of Propositions~\ref{bdd-cp} and~\ref{R-cp}). 
 
\begin{proposition}[Comparison principle in $\mathbb{R}$ with finitely many interfaces]
\label{R-cp}
For $T\in(0,+\infty]$, let $\overline{u}$ and $\underline{u}$ be, respectively, a super- and a subsolution in~$[0,T)\times\R$ of $\mathcal{L}u=0$ with~\eqref{app-interface}, and assume that $\overline{u}(0,\cdot)\ge\underline{u}(0,\cdot)$ in~$\R$. Then, $\overline{u}\ge\underline{u}$ in~$[0,T)\times\R$ and, if $\overline{u}(0,\cdot)\not\equiv\underline{u}(0,\cdot)$, then $\overline{u}>\underline{u}$ in $(0,T)\times\R$.
\end{proposition}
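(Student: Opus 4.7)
The plan is to reduce Proposition~\ref{R-cp} to the bounded-domain argument of Proposition~\ref{bdd-cp} by constructing a growth-at-infinity barrier that respects the flux interface conditions.

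First, I would fix $T_0\in(0,T)$ and set $M:=\max(\|\overline{u}\|_{L^\infty([0,T_0]\times\R)},\|\underline{u}\|_{L^\infty([0,T_0]\times\R)})$ and $\mu:=\max_{1\le i\le n}\|f'_i\|_{L^\infty([-M,M])}$. The function $w(t,x):=(\overline{u}-\underline{u})(t,x)\,e^{-\mu t}$ is then bounded on $[0,T_0]\times\R$ and, exactly as in the derivation of \eqref{w1}--\eqref{w2}, satisfies $\mathcal{N}w\ge 0$ in each patch (with $\mathcal{N}$ having non-negative zeroth-order coefficient $\mu-F_s(x,\eta)\ge 0$), the one-sided flux inequality $w_x(t,x_i^-)\ge\sigma_i w_x(t,x_i^+)$ at each interface, and $w(0,\cdot)\ge0$ in $\R$. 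It is therefore enough to show $w\ge0$ on $[0,T_0]\times\R$ for each such $T_0$.

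Next, I would construct the barrier. Since there are only finitely many interfaces $x_1<\cdots<x_{n-1}$, I would fix $N>0$ with all interfaces in $(-N+1,N-1)$ and choose $\Phi\in C(\R)$ such that $\Phi|_{\overline{I_i}}\in C^\infty(\overline{I_i})$ for each patch, $\Phi\equiv 1$ on $[-N,N]$, $\Phi(x)=1+|x|-N$ for $|x|\ge N+1$, with a smooth monotone transition in between. Then $\Phi\ge 1$, $\Phi(x)\to+\infty$ as $|x|\to\infty$, both $\Phi'$ and $\Phi''$ are uniformly bounded on $\R\setminus\{x_1,\dots,x_{n-1}\}$, and crucially $\Phi_x(x_i^-)=0=\sigma_i\Phi_x(x_i^+)$ at every interface, since $\Phi$ is constant in a neighborhood of each $x_i$. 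For $\beta>0$ sufficiently large (depending on $\max_i d_i$, $\|c\|_{L^\infty([0,T_0]\times\bigcup I_i)}$, $\|\Phi'\|_\infty$ and $\|\Phi''\|_\infty$), the function $\Psi(t,x):=e^{\beta t}\Phi(x)$ satisfies $\mathcal{N}\Psi\ge e^{\beta t}\ge 1$ in each patch, together with the equality form of the flux interface conditions.

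The core step is then to examine, for arbitrary $\varepsilon>0$, the perturbed function $z_\varepsilon:=w+\varepsilon\Psi$. It satisfies $\mathcal{N}z_\varepsilon\ge\varepsilon>0$ strictly in each patch, inherits the flux interface inequality (since $\Psi$ contributes an equality), and $z_\varepsilon(0,\cdot)\ge\varepsilon\Phi\ge\varepsilon>0$. Moreover, since $|w|\le 2M$ on $[0,T_0]\times\R$ while $\Psi(t,x)\to+\infty$ uniformly in $t\in[0,T_0]$ as $|x|\to\infty$, there exists $R=R(\varepsilon)>N+1$ such that $z_\varepsilon(t,x)>0$ for all $(t,x)\in[0,T_0]\times\R$ with $|x|\ge R$. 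On the bounded cylinder $[0,T_0]\times[-R,R]$, $z_\varepsilon$ is then positive on the full parabolic boundary, satisfies the strict inequality $\mathcal{N}z_\varepsilon>0$, and fulfils the interface condition; the argument used to establish positivity of the auxiliary function $z=w+\varepsilon(t+1)$ in the proof of Proposition~\ref{bdd-cp} (reaching a contradiction either at an interior patch minimum via \eqref{w1} or at an interface minimum via the Hopf lemma) carries over verbatim to give $z_\varepsilon>0$ on $[0,T_0]\times[-R,R]$. Combined with the exterior bound, $z_\varepsilon>0$ throughout $[0,T_0]\times\R$, and sending $\varepsilon\to 0^+$ yields $w\ge0$, hence $\overline{u}\ge\underline{u}$ on $[0,T_0]\times\R$, and then on $[0,T)\times\R$ by arbitrariness of $T_0$. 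For the strict inequality when $\overline{u}(0,\cdot)\not\equiv\underline{u}(0,\cdot)$, I would reproduce the closing argument of Proposition~\ref{bdd-cp}: continuity yields $w(0,\cdot)>0$ on a nonempty open interval intersecting some patch $I_j$, the interior strong parabolic maximum principle gives $\overline{u}>\underline{u}$ on $(0,T)\times I_j$, and the Hopf lemma together with the flux interface condition propagates strict positivity across each of the finitely many interfaces by induction on patches.

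The main obstacle is the barrier construction itself: one needs a single $\Phi$ that grows to infinity, is $C^2$ within each patch, \emph{and} satisfies a genuine flux condition $\Phi_x(x_i^-)=\sigma_i\Phi_x(x_i^+)$ at each interior interface. The finiteness of the interface set is what makes all three requirements simultaneously achievable, since one can confine every $x_i$ to a compact set on which $\Phi$ is constant (so both one-sided derivatives vanish and the flux condition is trivial), deferring all growth to the two unbounded leftmost and rightmost patches where the usual parabolic analysis applies. Without finiteness of interfaces, designing such a barrier becomes genuinely delicate and would require quantitative control compatible with the $\sigma_i$.
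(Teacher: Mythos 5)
Your proof is correct and follows essentially the same strategy as the paper's: you construct a perturbing barrier that is flat near the finitely many interfaces (so the flux conditions hold trivially), grows to infinity in $|x|$ (so any nonpositive minimum of the perturbed function is localized), and is a strict supersolution, then conclude via the interior parabolic maximum principle and the Hopf lemma, finally sending $\varepsilon\to0^+$. The only cosmetic difference is the choice of barrier: the paper uses the additive form $\varepsilon(\varrho(|x|)+t+1)$ with $\varrho$ chosen so that $(\max_i d_i)\|\varrho''\|_\infty+\|c\|_\infty\|\varrho'\|_\infty\le 1/2$, whereas you use the multiplicative form $\varepsilon e^{\beta t}\Phi(x)$ with $\beta$ chosen large enough to absorb the derivative terms.
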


\begin{proof}
Fix any $T_0\in(0,T)$ and define the nonnegative real numbers $M$ and $\mu$ as in~\eqref{defmu} with this time $\R$ instead of $[a,b]$ in the definition of $M$. Denote $w(t,x):=(\overline{u}(t,x)-\underline{u}(t,x))e^{-\mu t}$ for~$(t,x)\in[0,T_0]\times\R$. The function $w$ is continuous and bounded in $[0,T_0]\times\R$, with restriction in $(0,T_0]\times\overline{I_i}$ of class~$C^{1;2}_{t;x}((0,T_0]\times\overline{I_i})$ for each $1\le i\le n$ (notice that, here, $I_1=(-\infty,x_1)$ and $I_n=(x_{n-1},+\infty)$ are unbounded), and $w$ still satisfies~\eqref{w1}--\eqref{w2}, together with $w(0,\cdot)=\overline{u}(0,\cdot)-\underline{u}(0,\cdot)\ge 0$ in~$\R$. Set now $R=\max_{1\le i\le n-1}|x_i|+1>0$, and let $\varrho:\R\to\R$ be a nonnegative $C^2$ function with bounded first and second order derivatives, and satisfying
$$\left\{\baa{l}
\displaystyle\varrho=0\hbox{ in }[-R,R],\ \ \lim_{x\to+\infty}\varrho(x)=+\infty,\vspace{3pt}\\
\displaystyle\Big(\max_{1\le i\le n}d_i\Big)\times\|\varrho''\|_{L^\infty(\R)}+\|c\|_{L^\infty((0,T_0]\times\cup_{i=1}^nI_i)}\times\|\varrho'\|_{L^\infty(\R)}\le\frac12.\eaa\right.$$
Let us consider an arbitrary $\varep>0$, and introduce an auxiliary function $z$ defined by
$$z(t,x):=w(t,x)+\varep(\varrho(|x|)+t+1)\ \hbox{ for $(t,x)\in[0,T_0]\times\R$}.$$
The function $z$ has at least the same regularity as $w$, while $z(0,x)\ge\varep>0$ for all $x\in\R$ and $z(t,x)\to+\infty$ as $|x|\to+\infty$ uniformly in $t\in[0,T_0]$. Moreover,
$$\mathcal{N}z\ge\mathcal{N}w+\varep-\varep d(x)\varrho''(|x|)-\varep|c(t,x)\varrho'(|x|)|+ \big(\mu-F_s(x,\eta(t,x))\big)\varep(\varrho(|x|)+t+1)\ge\frac{\varep}{2}>0$$
for $(t,x)\in(0,T_0]\times\bigcup\limits_{i=1}^n I_i$, and~\eqref{z} still holds from~\eqref{w2}, the definition of $R$ and the choice of~$\varrho$. We claim that $z(t,x)>0$ for all $(t,x)\in[0,T_0]\times\R$. Assume not. Then, by continuity and the above properties of $z$, there is a point $(t_0,y_0)\in(0,T_0]\times\R$ such that $z(t_0,y_0)=\min_{[0,t_0]\times\R}z=0$. If $y_0\in I_i$ for some $1\le i\le n$, then we see as in~\eqref{Nz} that $\mathcal{N}z(t_0,y_0)\le 0$, which is impossible. Thus, we can assume without loss of generality that $y_0=x_i$ for some $1\le i\le n-1$ and that $z>0$ in $[0,t_0]\times\cup_{i=1}^nI_i$. Then, the Hopf lemma yields $z_x(t_0,x_i^-)<0$ and $z_x(t_1,x_i^+)>0$, contradicting~\eqref{z}. Consequently, $z>0$ in $[0,T_0]\times\R$. Hence, by passing to the limit as $\varep\to0^+$, we infer that $w\ge 0$ in $[0,T_0]\times\R$, that is, $\overline{u}\ge\underline{u}$ in $[0,T_0]\times\R$, and then $\overline{u}\ge\underline{u}$ in $[0,T)\times\R$ owing to the arbitrariness of $T_0\in(0,T)$.

Lastly, if one further assumes that $\overline{u}(0,\cdot)\not\equiv\underline{u}(0,\cdot)$, then the proof of the strict inequality $\overline{u}>\underline{u}$ in $(0,T)\times\mathbb{R}$ follows similar lines as in the proof of the preceding proposition.
\end{proof}

The last statement is a comparison principle for a class, more general than~\eqref{m-pb}--\eqref{d-f}, of non-periodic problems involving countably many interfaces. Namely, we are given a countable set $S=\{x_i:i\in\Z\}\subset\R$ with
\be\label{defdelta}
\delta:=\inf_{i\in\Z}\,(x_{i+1}-x_i)>0,
\ee
and we consider the problem
\begin{equation}
\label{m-pb-bis}
\left\{\baa{rcll}
u_t-d(x)u_{xx}-c(t,x)u_x & = & F(x,u), & t>0,\ x\in\mathbb{R}\!\setminus\!S,\vspace{3pt}\\
u(t,x_i^-) & = & u(t,x_i^+), & t> 0,\ i\in\Z,\vspace{3pt}\\
u_x(t,x_i^-) & = & \sigma_i u_x(t,x_i^+), & t> 0,\ i\in\Z.\eaa\right.
\end{equation}
We assume that the function $x\mapsto d(x)$ is equal to a positive constant $d_i$ in each interval~$(x_i,x_{i+1})$, and that $\sup_{i\in\Z}d_i\!<\!+\infty$. The function $c$ is assumed to be continuous and bounded in $(0,T_0)\times(\R\!\setminus\!S)$ for every $T_0\in(0,+\infty)$, the $\sigma_i$'s are given positive real numbers, and there are~$C^1(\R)$ functions~$(f_i)_{i\in\Z}$ such that $F(x,s)=f_i(s)$ for every $(x,s)\in(x_i,x_{i+1})\times\R$ and $i\in\Z$, with $\sup_{i\in\Z}\|f'_i\|_{L^\infty([-L,L])}\!<\!+\infty$ for every $L>0$.

For $T\in(0,+\infty]$, we say that a continuous function $\overline{u}:[0,T)\times\R\to\R$, which is assumed to be bounded in $[0,T_0]\times\R$ for every $T_0\in(0,T)$, is a supersolution of~\eqref{m-pb-bis} in~$[0,T)\times\R$, if, for every $i\in\Z$, the function $\overline{u}|_{(0,T)\times[x_i,x_{i+1}]}$ is of class $C^{1;2}_{t;x}((0,T)\times[x_i,x_{i+1}])$ and satisfies $u_t(t,x)-d_iu_{xx}(t,x)-c(t,x)u_x(t,x)\ge F(x,u(t,x))$ for every $(t,x)\in(0,T)\times(x_i,x_{i+1})$, and if $\overline{u}_x(t,x_i^-)\ge \sigma_i \overline{u}_x(t,x_i^+)$ for every~$i\in\Z$ and~$t\in(0,T)$. A subsolution is defined similarly with all the inequality signs reversed.

The following result provides a comparison between sub- and supersolutions of~\eqref{m-pb-bis} with ordered initial conditions, thus yielding the uniqueness of solutions for given initial conditions.

\begin{proposition}[Comparison principle for problems of type~\eqref{m-pb-bis}]
\label{cp}
For $T\in(0,+\infty]$, let $\overline{u}$ and $\underline{u}$ be, respectively, a super- and a subsolution  of~\eqref{m-pb-bis} in~$[0,T)\times\R$ with $\overline{u}(0,\cdot)\ge\underline{u}(0,\cdot)$ in~$\R$. Then, $\overline{u}\ge\underline{u}$ in~$[0,T)\times\R$, and, if $\overline{u}(0,\cdot)\not\equiv\underline{u}(0,\cdot)$, then $\overline{u}>\underline{u}$ in~$(0,T)\times\R$.
\end{proposition}

\begin{proof}
Fix any $T_0\in(0,T)$. Define
$$M\!:=\!\max\!\big(\|\overline{u}\|_{L^\infty([0,T_0]\times\R)},\|\underline{u}\|_{L^\infty([0,T_0]\times\R)}\big)\hbox{ and }\mu\!:=\!\sup_{i\in\Z}\|f'_i\|_{L^\infty([-M,M])}\!=\!\!\sup_{x\in\R\setminus S,\,|s|\le M}\!|F_s(x,s)|,$$
which are two nonnegative real numbers. Denote $w(t,x):=(\overline{u}(t,x)-\underline{u}(t,x))e^{-\mu t}$ for~$(t,x)\in[0,T_0]\times\R$. The function $w$ is continuous and bounded in $[0,T_0]\times\R$, and it still satisfies inequalities similar to~\eqref{w1} (with, here, $\cup_{i=1}^nI_i$ replaced by $\R\setminus S$), together with $w_x(t,x_i^-)\ge\sigma_iw_x(t,x_i^+)$ for every $i\in\Z$ and $t\in(0,T_0]$. Furthermore, $w(0,\cdot)=\overline{u}(0,\cdot)-\underline{u}(0,\cdot)\ge 0$ in~$\R$.

Let now $(\rho_m)_{m\in\N}$ be a family of nonnegative $C^\infty(\R)$ mollifiers with unit mass and such that each function $\rho_m$ has a support included in $[-1/m,1/m]$. Remember that $\delta>0$ is defined in~\eqref{defdelta}. With $\mathbbm{1}_E$ denoting the characteristic function of a set $E$, and $\star$ being the convolution product, we then define
$$F:=\mathop{\bigcup}_{i\in\Z}\,\Big(x_i+\frac{3\delta}{10},x_{i+1}-\frac{3\delta}{10}\Big)$$
and
$$\phi:=\rho_m\star\big(-\mathbbm{1}_{F\cap(-\infty,-1/2)}+\mathbbm{1}_{F\cap(1/2,+\infty)}\big),$$
with a certain $m$ large enough so that the $C^\infty(\R)$ function $\phi$ satisfies $\phi\le0$ in $(-\infty,0]$, $\phi\ge0$ in~$[0,+\infty)$, $\phi=-1$ in $[x_i+2\delta/5,x_{i+1}-2\delta/5]\cap(-\infty,-1]$, $\phi=1$ in $[x_i+2\delta/5,x_{i+1}-2\delta/5]\cap[1,+\infty)$ and $\phi=0$ in $[x_i-\delta/5,x_i+\delta/5]$, for all $i\in\Z$. Notice that $-1\le\phi\le1$ in $\R$ and that $\phi'$ is bounded in $\R$. Let us then define
$$\varrho(x):=\int_0^x\phi(y)\,\mathrm{d}y$$
for $x\in\R$. The $C^\infty(\R)$ function $\varrho$ is nonnegative, it has bounded first and second order derivatives, and $\varrho(x)\to+\infty$ as $x\to\pm\infty$. There is then a positive real number $\kappa>0$ such that
$$\Big(\sup_{i\in\Z}d_i\Big)\times\|\kappa\varrho''\|_{L^\infty(\R)}+\|c\|_{L^\infty((0,T_0]\times(\R\setminus S))}\times\|\kappa\varrho'\|_{L^\infty(\R)}\le\frac12.$$

Let us then consider an arbitrary $\varep>0$, and introduce an auxiliary function $z$ defined by
$$z(t,x):=w(t,x)+\varep(\kappa\varrho(x)+t+1)\ \hbox{ for $(t,x)\in[0,T_0]\times\R$}.$$
The function $z$ is continuous in $[0,T_0]\times\R$, and it satisfies
\be\label{z3}
z(0,x)\ge\varep>0\hbox{ for all $x\in\R$, and $z(t,x)\to+\infty$ as $|x|\to+\infty$ uniformly in $t\in[0,T_0]$},
\ee
since $w$ is bounded in $[0,T_0]\times\R$ and $\varrho(\pm\infty)=+\infty$. Moreover, with the same notations as in~\eqref{w1}, one has
$$\baa{rcl}
\mathcal{N}z(t,x) & \!\!\!=\!\!\! & \displaystyle\underbrace{\mathcal{N}w(t,x)}_{\ge0}+\varep-\underbrace{(\varep d(x)\kappa\varrho''(x)\!+\!\varep c(t,x)\kappa\varrho'(x))}_{\le\varep/2}+\underbrace{(\mu\!-\!F_s(x,\eta(t,x)))}_{\ge0}\underbrace{\varep(\varrho(x)+t+1)}_{\ge0}\vspace{3pt}\\
& \!\!\!\ge\!\!\! & \displaystyle\frac{\varep}{2}>0\eaa$$
for all $(t,x)\in(0,T_0]\times(\R\!\setminus\!S)$, while $z_x(t,x_i^-)\ge\sigma_iz_x(t,x_i^+)$ for all $t\in(0,T_0]$ and $i\in\Z$ (since $w$ satisfies these inequalities and $\varrho'(x_i)=\phi(x_i)=0$ for each $i\in\Z$). 

We claim that $z>0$ in $[0,T_0]\times\R$. Assume not. Then, by continuity and~\eqref{z3}, there is~$(t_0,y_0)\in(0,T_0]\times\R$ such that $z(t_0,y_0)=\min_{[0,t_0]\times\R}z=0$. If $y_0\in(x_i,x_{i+1})$ for some~$i\in\Z$, then we see as in~\eqref{Nz} that $\mathcal{N}z(t_0,y_0)\le0$, which is impossible. Thus, one can assume without loss of generality that $y_0=x_i$ for some $i\in\Z$ and that $z>0$ in $[0,t_0]\times(\R\setminus S)$, whence $z_x(t_0,x_i^-)<0$ and $z_x(t_0,x_i^+)>0$ from the Hopf lemma, which is again impossible. As a consequence, $z>0$ in $[0,T_0]\times\R$, hence $w\ge0$ in~$[0,T_0]\times\R$ due to the arbitrariness of $\varep>0$, and finally $\overline{u}\ge\underline{u}$ in $[0,T)\times\R$ due to the arbitrariness of $T_0\in(0,T)$.

Lastly, if one further assumes that $\overline{u}(0,\cdot)\not\equiv\underline{u}(0,\cdot)$ in $\R$, then one concludes as in the proof of Proposition~\ref{bdd-cp} that $\overline{u}>\underline{u}$ in $(0,T)\times\mathbb{R}$.
\end{proof}

\section*{Acknowledgements}
This work has been carried out in the framework of the A*MIDEX project (ANR-11-IDEX-0001-02), funded by the ``Investissements d'Avenir" French Government program managed by the French National Research Agency (ANR). The research leading to these results has also received funding from ANR project RESISTE (ANR-18-CE45-0019). M.~Zhang acknowledges the China Scholarship Council for the two-year financial support  during her study at Aix-Marseille Universit\'{e}. This work was initiated while F.~Lutscher held the position of Professeur Invit\'e at Aix-Marseille Universit\'{e}. The authors are grateful for this opportunity and the financial support from Aix-Marseille Universit\'{e}. The authors would also like to thank Prof. Xing Liang and Prof. Xiao-Qiang Zhao for many helpful discussions. They are also grateful to an anonymous referee for his/her valuable comments, which enabled us to improve the original manuscript.
 

\small

\end{document}